 \theoremstyle{plain}
\newtheorem{theo}{Theorem}[subsection]
\newtheorem{pr}[theo]{Proposition}
 \newtheorem{coro}[theo]{Corollary}
\theoremstyle{remark}
\newtheorem{rema}[theo]{Remark}
\theoremstyle{definition}
\newtheorem{defi}[theo]{Definition}
 \newcommand\lan{\langle}
\newcommand\ra{\rangle}
\newcommand\gd{\mathfrak{D}}
\newcommand\gdos{\mathfrak{D}(S)}
\newcommand\gdeos{\mathfrak{D}^{eff}(S)}
\newcommand\wchow{{w_{Chow}}}
\newcommand\dmes{\dm^{eff}_{\delta}(S)}
\newcommand\dmgm{DM_{gm}}
\newcommand\dme{DM_-^{eff}{}}
\newcommand\dm{DM}
\newcommand\dms{DM(S)}
\newcommand\dmsp{DM(S')}
\newcommand\pdmcs{Pro-DM_c(S)}
\newcommand\dmcs{DM_c(S)}
\newcommand\dmcsp{DM_c(S')}
\newcommand\dmcspp{DM_c(S'')}
\newcommand\dmx{DM(X)}
\newcommand\dmy{DM(Y)}
\newcommand\dmcx{DM_c(X)}
\newcommand\dmcy{DM_c(Y)}
\newcommand\dmc{DM_c}
\newcommand\dmu{DM(U)}
\newcommand\mg{\mathcal{M}}
\newcommand\mgx{\mathcal{M}_X}
\newcommand\mgxp{\mathcal{M}_{X'}}
\newcommand\mgs{\mathcal{M}_S}
\newcommand\mgy{\mathcal{M}_Y}
\newcommand\mgbm{\mathcal{M}^{BM}_S}
\newcommand\mgbms{\mathcal{M}^{BM}_S}
\newcommand\mgbmx{\mathcal{M}^{BM}_X}
\newcommand\chows{Chow(S)}
\newcommand\chowes{Chow^{eff}(S)}
\newcommand\wger{w_{Ger}}
\newcommand\wgers{w_{Ger}(S)}
\newcommand\hwgers{\hw_{Ger}(S)}
\newcommand\obj{Obj}
\newcommand\id{id}
\newcommand\cu{\underline{C}}
\newcommand\du{\underline{D}}
\newcommand\au{\underline{A}}
\newcommand\eu{\underline{E}}
\newcommand\z{{\mathbb{Z}}}
\newcommand\q{{\mathbb{Q}}}
\newcommand\p{\mathbb{P}}
\newcommand\codim{\operatorname{codim}}
\newcommand\cl{\operatorname{cl}}
\newcommand\al{\alpha}
\newcommand\be{\beta}
\newcommand\xx{{\mathcal{X}}}
\newcommand\xxpp{{\mathcal{X''}}}
\newcommand\ns{\{0\}}
\DeclareMathOperator\prli{\varprojlim}
\DeclareMathOperator\inli{\varinjlim}
\newcommand\chow{Chow}
\newcommand\chowe{Chow^{eff}}
\newcommand\ab{Ab}
\newcommand\qv{\q-\operatorname{Vect}}
\newcommand\spe{\operatorname{Spec}}
\newcommand\sms{{\mathfrak{Sm}}_S}
\newcommand\smx{{\mathfrak{Sm}}_X}
\newcommand\sm{\mathfrak{Sm}}
\newcommand\smy{\mathfrak{Sm}_Y}
\newcommand\smxp{\mathfrak{Sm}_{X'}}
\newcommand\tcho{t_{Chow}}
\newcommand\thom{t_{hom}}
\newcommand\thome{t_{hom}^{eff}}
\DeclareMathOperator\dimz{\delta}
\DeclareMathOperator\codimz{\operatorname{codim}^{\delta}}
 \DeclareMathOperator\ke{\operatorname{Ker}}
\DeclareMathOperator\imm{\operatorname{Im}}
\DeclareMathOperator\kar{\operatorname{Kar}}
\newcommand\hrt{{\underline{Ht}}}
\newcommand\hrthom{{\underline{Ht}_{hom}}}
\newcommand\hw{{\underline{Hw}}}
\begin{document}

 \title{On  perverse homotopy $t$-structures,  coniveau spectral sequences, cycle modules, and relative Gersten weight structures} 
 \author{Mikhail V. Bondarko
   \thanks{ 
 The work is supported by RFBR
(grants no.   14-01-00393 and 15-01-03034), by  Dmitry Zimin's Foundation "Dynasty", and by the Scientific schools grant no. 3856.2014.1.}}\maketitle

\begin{abstract}

\end{abstract}
We construct a certain $t$-structure $\thom=\thom(S)$ for $\dms$ (this is a version of Voevodsky's triangulated category of motives over quite a general base scheme $S$ that was described by Cisinski and D\'eglise); for $S$ being the spectrum of a perfect field this $t$-structure extends the homotopy $t$-structure of Voevodsky, whereas for $S$ being of finite type over (the spectrum of) a discrete valuation ring $\thom(S)$  is the perverse homotopy $t$-structure of Ayoub. 
For any 
("reasonable")  
 scheme $S$ our $t$-structure is characterized in terms of certain stalks of an $H\in \obj \dms$ and its Tate twists at fields over $S$; this generalizes one of the descriptions of the homotopy $t$-structure over a field (given by Voevodsky, Cisinski, and D\'eglise). 
$\thom(S)$ is closely related to certain coniveau spectral sequences for the cohomology of arbitrary
  finite type $S$-schemes (actually, we study the cohomology of  Borel-Moore motives of such schemes). 
  We conjecture that the heart of $\thom(S)$ is given by 
   cycle modules over $S$ 
   (as defined by Rost); for schemes of finite type over characteristic $0$ fields this conjecture was proved by D\'eglise. Our definition of $\thom(S)$ is closely related to  
  a new effectivity filtration for $\dms$ (and for the category of Chow motives $\chows\subset \dms$). We also sketch the construction of a certain Gersten weight structure on the category of $S$-comotives $\gdos\supset \dmcs$; this weight structure yields one more description of $\thom(S)$ and its heart.

\tableofcontents

 \section*{Introduction}
  We construct a certain $t$-structure $\thom=\thom(S)$ for $\dms$ (the category of {\it Beilinson motives}, i.e.,  a version of Voevodsky motivic category over quite a general base scheme $S$ described in \cite{degcis}; 
	we assume that $S$ is 
	{\it reasonable} 
  in the sense described in \S\ref{snotata} below). For $S$ being the spectrum of a perfect field  $\thom(S)$  coincides with the  "stable extension" of the homotopy $t$-structure of Voevodsky that was considered in \S5 of \cite{degmod}, whereas for $S$ being of finite type over %
  (the spectrum of) a discrete valuation ring
	our $t$-structure (essentially) coincides with 
	the perverse homotopy $t$-structure of Ayoub. 

  Recall that one can define the homotopy $t$-structure on Voevodsky's $\dme(k)$ (the category of effective motivic complexes) 
  via two quite distinct methods. The first one relies on the canonical $t$-structure on the derived category of Nisnevich sheaves with transfers; this corresponds to the study of the "stalks" of an object $H$ of $\dme$ at all essentially smooth Henselian schemes over $k$. The second way is to consider the stalks of 
  $H(-i)[-i]$ (considered as an object of some 'stable' category of motivic complexes) at function fields over $k$ (only). 
  It turns out that the second method carries over to motives over quite general bases if one defines the stalks in question "appropriately" (whereas the main result of \cite{aycex} seems to yield that the first method cannot yield a "nice" $t$-structure on relative motives). So, we use the latter approach;
  one of the main ingredients of our construction is the usage of  Borel-Moore motives of finite type $S$-schemes (in particular, we need them for defining the stalks mentioned). The paper also heavily relies on certain coniveau spectral sequences for the cohomology (of the Borel-Moore motives) of $S$-schemes 
 (note in contrast that in \cite{degfib} 'the usual' motives of smooth $S$-schemes are essentially considered). 
   Their construction is more or less automatic given the definition of Borel-Moore motives (that are closely related to the ones considered in \cite{lemm}; see Remark \ref{rnat}(2) below), yet studying their functoriality requires quite a bit of effort.
  These coniveau spectral sequences (for $\dms$-representable cohomology theories) can also be described in terms of $\thom(S)$ (this is a natural analogue of the corresponding results of \cite{ndegl}, \cite{bger}, and \cite{bgern}). In particular, the cohomology of $\mgbm(X)$ for a finite type $X/S$ with the coefficients in an object $H$ of the heart $ \hrthom(S)$ of $\thom(S)$ is just the cohomology of the corresponding Cousin complex. Moreover, we prove 
  for  such an $H$ that the collection of all stalks of $H(i)[i]$ (for all $i\in \z$) at all (spectra of essentially of finite type) fields over $S$ satisfies some of the axioms of cycle modules over $S$ (as defined in \cite{rostc}).
  
  Now we describe the idea that has initiated 
   the 
   work on this paper. 
   The author has realized the following: similarly to the case of motives over a field (cf. \S4.9 of \cite{bger}), the Chow weight structure on
  $\dmcs$ (as constructed in \cite{hebpo} and \cite{brelmot}) should be closely related to the {\it Gersten weight structure} (for a certain triangulated category $\gdos$ of $S$-{\it comotives}). So, this weight structure $\wger(S)$ should  be "cogenerated" by the twists of a certain $\chowes\subset \chows$ (the latter category was defined in \cite{hebpo} and \cite{brelmot}) by $(i)[i]$, whereas   the $t$-structure $\thom(S)$ for $\dms$ that is {\it orthogonal} to this weight structure should be "generated" by $\chowes(i)[i]$. Thus the author gave a definition of $\chowes$ generalizing the category of effective Chow motives over a field (considered as a full additive subcategory of 
  $\dme(k)\subset \dm(\spe k)$). The idea to consider "our" Borel-Moore motives  came from the definition of $\chow(S)$ (only later the author has realized that these motives are closely related to the ones considered in \cite{lemm}). Next he proved Theorem \ref{tmgeff}(II.1) of this paper; together with Proposition \ref{pcss} below (whose proof is more or less straightforward) this result 
  implies  
  the basic properties  $\wger(S)$ and $\thom(S)$. Yet the study of   $\wger(S)$ requires some work on $\gdos$; so the author chose to concentrate on $\thom(S)$ in the current text and only sketch the proofs of the results related to $\wger(S)$ (whereas several  arguments and definitions required for their detailed proofs can be found in \cite{bgern}).
	
	Our perverse homotopy $t$-structures are closely related to the  perverse homotopy $t$-structures studied in \S2.2 of \cite{ayobook}. The latter depend on the choice of some fixed base scheme $S_0$ (all the schemes considered should be of finite type over it); in the case where $S_0$ is the spectrum of a field or a discrete valuation ring several nice properties of these $t$-structures were established in ibid. (including the fact that they differ from our $\thom(-)$ only by $[\dim S_0]$). 
	Moreover, these results recently enabled F. D\'eglise  to prove (in \cite{degchz})  that $\hrthom(S)$ (as well as its integral coefficient version) is equivalent to the (corresponding) category of  $S$-cycle modules 
	 if $S$ is 
	 a variety over a characteristic $0$ field.
	Yet for a general $S$ our results are quite new (since the arguments of \cite{ayobook} heavily rely on a certain existence of alterations statement which 
	is only known to hold over a discrete valuation ring, whereas our approach is 
	somewhat more 'formal').
  
  Lastly we note that it would certainly be very interesting to extend (some of) the results of the current paper to motives with integral coefficients.
  
  Now we describe the contents of the paper; some more information of this sort can be found at the beginnings of sections. 
  
  In \S1 we recall some basics on 
  $t$-structures and motives over a base.
  
  In \S2 we study the Borel-Moore motives of $S$-schemes, their "smooth $S$-models", and coniveau spectral sequences for their cohomology. The most 
  'interesting' statement of the section   is Theorem \ref{tmgeff}(II.1).

  In \S3 we define (our version of) the perverse homotopy $t$-structure $\thom(S)$ and establish its main properties (that generalize the corresponding results of \cite{ayobook}). 
  We prove that the objects of $\hrthom(S)$ satisfy some of the axioms of cycle modules over $S$ (as defined in \cite{rostc}). Next we introduce a certain
effectivity filtration for $\dms$ and study its properties (that are quite similar to the ones over perfect fields). We also sketch the proof of the main properties of a certain Gersten weight structure for the category $\gdos$ of $S$-comotives; these statements yield the motivic functoriality of coniveau spectral sequences and their description in terms of $\thom(S)$.

The author would  like to express his gratitude to   prof. L.E. Positselski and prof. F. D\'eglise for their helpful comments. 
He is also deeply grateful to prof. D\'eglise and to Unit\'e de math\'ematiques pures et appliqu\'ees of
\'Ecole normale sup\'erieure de Lyon for the wonderful working conditions in January of 2015.

\section{Some preliminaries: notation, $t$-structures, and motives over a base}

In this section we 
recall some basics on $t$-structures and relative motives.

In \S\ref{snotata} we introduce some 
notation and definitions. 

In \S\ref{dtst} we
recall the notion of  $t$-structure
(and introduce some notation for it).  We also recall that "compactly generated" {\it preaisles} are  {\it aisles} in the terms of \cite{talosa}.


In \S\ref{sdm}  we recall (following \cite{degcis}) some basic properties of Beilinson motives over  
{\it reasonable}  schemes. 

  \subsection{Some notation and definitions}\label{snotata}

For categories $C,D$ we write 
$D\subset C$ if $D$ is a full 
subcategory of $C$.

 For a category $C,\ X,Y\in\obj C$, we denote by
$C(X,Y)$ the set of  $C$-morphisms from  $X$ to $Y$.
We will say that $X$ is  a {\it
retract} of $Y$ if $\id_X$ can be factored through $Y$. Note that if $C$ is triangulated or abelian 
then $X$ is a  retract of
$Y$ if and only if $X$ is its direct summand.

For any $D\subset C$
the subcategory $D$ is called {\it Karoubi-closed} in $C$ if it
contains all retracts of its objects in $C$. We will call the
smallest Karoubi-closed subcategory of $C$ containing $D$  the {\it
Karoubi-closure} of $D$ in $C$; sometimes we will use the same term
for the class of objects of the Karoubi-closure of a full subcategory
of $C$ (corresponding to some subclass of $\obj C$).

The {\it Karoubization} $\kar(B)$ (no lower index) of an additive
category $B$ is the category of "formal images" of idempotents in $B$
(so $B$ is embedded into an idempotent complete category; it is
triangulated if $B$ is).

For a category $C$ we denote by $C^{op}$ its opposite category.

$\qv$ is the category of $\q$-vector spaces.

For a category $C$ and 
a 
partially ordered 
 index set $I$
 we will call a 
 set $X_i\subset \obj C$ a (filtered)
{\it projective system}
 if for any $i,j\in I$ there exists some maximum, i.e., an $l\in I$
 such that $l\ge i$ and $l\ge j$, and for all $j\ge i$ in $I$ there are fixed morphisms $X_j\to X_i$ satisfying the natural functoriality property. For such a system we have the natural notion of an inverse limit. Dually, we will call the inverse limit of a system of $X_i\in \obj C^{op}$ the direct limit of $X_i$ in $C$.
 
 All limits  
 (and pro-objects) in this paper will be filtered ones.

In this paper all complexes will be cohomological, i.e., the degree of
all differentials is $+1$; respectively, we will use cohomological
notation for their terms. $K^b(B)$ will denote the homotopy category of bounded complexes over an additive category $B$.

$M\in \obj C$ will be called compact if the functor $C(M,-)$
commutes with all small coproducts that exist in $C$ 
(we will only consider compact objects in those categories that are closed with respect to arbitrary small coproducts).

Dually, an object $M$ of $C$ is called cocompact if it is compact as an object of $C^{op}$.

$\cu$ and $\du$ will usually denote some triangulated categories.
 We will use the
term "exact functor" for a functor of triangulated categories (i.e.,
for a  functor that preserves the structures of triangulated
categories).

A class $D\subset \obj \cu$ will be called {\it extension-closed} if $0\in D$ and for any
distinguished triangle $A\to B\to C$  in $\cu$ we have the following implication: $A,C\in
D\implies B\in D$. In particular, an extension-closed $D$ is strict (i.e., contains all
objects of $\cu$ isomorphic to its elements).

The smallest extension-closed $D$ containing a given $D'\subset \obj \cu$ will be called the {\it extension-closure} of $D'$.

$D\subset \obj \cu$ will be called {\it suspended} (resp. {\it cosuspended}) if $D[1]\subset D$ (resp. $D\subset D[1]$). 

Below $\au$ will always  denote some abelian category.
We will 
usually assume that $\au$ satisfies AB5, i.e.,  it is closed with respect to all
small coproducts, and  filtered direct limits of exact sequences in
$\au$ are exact.

We will call a contravariant additive functor $\cu\to \au$
for an abelian $\au$ {\it cohomological} if it converts distinguished
triangles into long exact sequences. For a cohomological $H$ we will denote $H\circ [-i]$ by $H^i$.

 A functor $H:\cu\to \au$ below will always 
be cohomological; 
yet we will often use the letter $H$ also  for the object of a triangulated category (of the type $\dm(-)$) 
that  represents this functor.


For $X,Y\in \obj \cu$ we will write $X\perp Y$ if $\cu(X,Y)=\ns$.
For $D,E\subset \obj \cu$ we  write $D\perp E$ if $X\perp Y$
 for all $X\in D,\ Y\in E$.
For $D\subset \cu$ we  denote by $D^\perp$ the class
$$\{Y\in \obj \cu:\ X\perp Y\ \forall X\in D\}.$$
Sometimes we will denote by $D^\perp$ the corresponding
 full subcategory of $\cu$. Dually, ${}^\perp{}D$ is the class
$\{Y\in \obj \cu:\ Y\perp X\ \forall X\in D\}$. 

We will say that  $C_i\in \obj \cu$ {\it generate} $\cu$ if $\cu$ equals the Karoubi-closure of $\lan
C_i\ra$ in $\cu$. For a $\cu$  closed with respect to all small coproducts and $\du\subset \cu$ ($\du$ could be equal to $\cu$)
  we will say that 
$C_i$ generate $\du$ {\it as a localizing subcategory} if  
$\du$ is the smallest full strict triangulated subcategory of $\cu$ that contains $C_i$ and is closed with respect to all small coproducts.
 

All the 
  motivic categories of this paper 
  will be $\q$-linear ones. So, we can assume that the target categories of our cohomology theories (usually of the type $H:\dmcs\to \au$) are $\q$-linear ones.

  All morphisms and  schemes below will be separated. Besides, all schemes will be excellent noetherian of finite Krull dimension. 
$S$ will usually be our 
base scheme; $X$ will be of finite type over it. Often $j:U\to X$ will be an open immersion, and $i:Z\to X$ will be the complementary closed embedding.
 
 We will mostly be interested in finite type (separated) morphisms of schemes. In particular, we will say that a morphism is smooth  only if it is also of finite type.

For any scheme $Y$ we  denote by $Y_{red}$  the reduced scheme  associated with $Y$. 
It will always be possible in our arguments to replace $Y$ by $Y_{red}$.

 We will say that a projective system $X_i$, $i\in I$,  of schemes is {\it essentially affine} if the transition morphisms $X_j\to X_i$ are affine 
whenever $i\ge i_0$ (for some $i_0\in I$). Below we will only be interested in projective systems of this sort.

We will say that a scheme $X_0/S$ is essentially of finite type over $S$ if it can be presented as the (inverse) limit of finite type schemes $X_i/S$ such that all the transition morphisms between $X_i$ are open dense affine embeddings. We will say that $X_0$ is  (the spectrum of) a {\it 
   field over $S$} if $X_0$ is the spectrum of a field and
   is essentially of finite type over $S$.

	Below we will always assume that our "base scheme" $S$ is {\it reasonable} in the sense of (Definition 1.1 of) \cite{brelmot}. So we fix a certain  (separated noetherian finite-dimensional) "absolute" base scheme $S_0$ that we demand to be regular and of Krull dimension $\le 2$, and assume that $S$ if of finite type over $S_0$(. Moreover, it would be convenient for us to assume that all the schemes we consider in this paper are essentially of finite type over $S_0$; all the morphisms are $S_0$-ones. 

Below we will need two (possibly) somewhat distinct dimension functions for schemes; $\dim(-)$ will denote the ordinary Krull dimension. Respectively, "codimension" will mean the ordinary Krull codimension; thus we will say that $U\subset X$ is everywhere of codimension $c$ in it if the dimension of each connected component of $U$ is lesser by $c$ that the dimension of the connected component of $X$ containing it. 
We will say that a morphism is of dimension $s$ if 
 all of its fibres are equidimensional of Krull dimension $s$.

  \subsection{On dimension functions for excellent schemes}\label{sdimf}
	
	Below we will need a dimension function for $S_0$-schemes whose properties are somewhat better than the ones of the Krull dimension. So we recall a method for constructing a certain "nice" dimension function $\dimz$ (for essentially  finite type $S_0$-schemes) and formulate some properties of this function. For the convenience of the reader, we note that $\dimz$ (if we define it via the method described below)
	coincides with the Krull dimension if $S_0$ is of finite type over a field or over $\spe \z$ (more generally, it suffices to assume that $S$ is a Jacobson scheme all of whose components are equicodimensional; see Proposition 10.6.1 of \cite{ega43}). 
	Sometimes will call $\dimz(X)$ the {\it $\delta$-dimension} of $X$.
	
	Now, for a general (regular excellent finite dimensional separated) $S_0$ we apply the method described in Chapter XIV of \cite{illgabb}. Since we are only interested in excellent schemes, the corresponding theory is simpler than in the more general case considered in ibid.
	
	We start with "restricting" Definition 2.1.10 of ibid. to the setting of excellent schemes (see Proposition 2.1.6 of ibid.).
	
	\begin{defi}\label{ddf}
	Let $X$ be an excellent scheme; denote the set of its Zariski points by $\xx$.
	
	We will call a function $\dimz:\xx\to \z$ a {\it dimension function} if is satisfies the following condition: for any two $x,y\in \xx$ such that $y$ belongs to the closure of $x$ and the dimension of the localization of this closure at $y$ is $1$ we have $\dimz(x)=\dimz(y)+1$.

\end{defi}
	
	The definition immediately yields that such any two dimension functions for $X$ differ by a "shift function" that is constant on the connected components of $X$. Conversely, the sum of a dimension function with a locally constant one yields another dimension function. 
	Certainly, all dimension functions are bounded (if we fix a finite dimensional $X$).

	A reader can easily note that the results below essentially do not depend on the choice of a dimension function.
	Yet it will be convenient for us to  assume that all the dimension functions we consider below are non-negative. 

	We also note (applying Proposition 2.2.2 of ibid.) that for any scheme $Y$ whose connected components are integral one can define a "canonical"  dimension function using the following ("reasonable") formula  (for $y$ running through all points of $Y$)
	\begin{equation}\label{edimf} \dimz(y)=\dim Y^{y}-\dim \mathcal{O}_{Y,y},\end{equation}
	where $\mathcal{O}_{Y,y}$ is the corresponding local ring, $Y^y$ is the connected component of $Y$ containing $y$ (and $\dim(-)$ denotes the Krull dimension). 
	
	In particular, 
	({edimf}) can be applied for $Y=S_0$ (since $S_0$ is regular).
	
	Next we recall that for a fixed "base scheme" $Y$ and a fixed dimension function
	one can define "compatible" dimension functions for all schemes of finite type over $Y$.
	So,  any scheme $X$ that is essentially of finite type over $S_0$ possesses a dimension function  also; see Corollary 2.5.2 of ibid. Moreover, loc. cit. states that one can obtain a dimension function (that we will also denote by $\dimz$) for all finite type schemes over $S_0$ (simultaneously) as follows: for a field $X_0/S_0$ and $s_0$ being the image of $X_0$ in $S_0$ one should set $
	\dimz(X_0)=\dimz(s_0)+d$, where $d$ is the  transcendence degree of the (coordinate ring) extension $X_0/S_0$. 
	
	Next, for any essentially finite type $S_0$-scheme $X$ we define $\dimz(X)$ as the maximum of the corresponding "dimensions" of its points (that is easily seen to be finite and coincide with the maximum of "dimensions" of generic points of $X$).

	We will need a 
 'crude' notion of codimension (for essentially finite type $S_0$-schemes). 
For $T$ being a subscheme or a Zariski point of $ X$ 
we set $\codimz_X(T)=\dimz X-\dimz T$; we will call $\codimz_X(T)$ the {\it $\delta$-codimension} of $T$ in $X$. Obviously,  $\codimz_T(X)\ge 0$. We have an  equality  if $T$ is (open) dense in $X$
(or if it is the projective limit of a system of dense subschemes of $X$), and the inequality is strict if $X$ is connected and $T$ is closed in it. 
Note that $\codimz_T(X)$ coincides with the "usual" Krull codimension of $T$ in $X$ if $T$ and $X$ are irreducible; yet it often differs from the Krull codimension in the general case.
Respectively, it may make sense to call the ($\delta$)-coniveau spectral sequences introduced in Proposition \ref{pcss} below the niveau ones instead.

  \subsection{ $t$-structures: basics, compactly generated ones, and relation to gluing }\label{dtst}

In order to fix the notation, let us recall the definition of a $t$-structure.

\begin{defi}\label{dtstr}

A pair of subclasses  $\cu^{t\ge 0},\cu^{t\le 0}\subset\obj \cu$
for a triangulated category $\cu$ will be said to define a
$t$-structure $t$ if $(\cu^{t\ge 0},\cu^{t\le 0})$  satisfy the
following conditions:

(i) $\cu^{t\ge 0},\cu^{t\le 0}$ are strict (i.e., contain all
objects of $\cu$ isomorphic to their elements).

(ii) $\cu^{t\le
0}[1]\subset \cu^{t\le 0}$ (i.e., $\cu^{t\le 0}$ is suspended; see \S\ref{snotata}); $\cu^{t\ge 0}$ is cosuspended. 

(iii) {\bf Orthogonality}. $\cu^{t\le 0}[1]\perp
\cu^{t\ge 0}$.

(iv) {\bf $t$-decomposition}. For any $X\in\obj \cu$ there exists
a distinguished triangle
\begin{equation}\label{tdec}
A\to X\to B[-1]{\to} A[1]
\end{equation} such that $A\in \cu^{t\le 0}, B\in \cu^{t\ge 0}$.

\end{defi}

We will need some more notation. 

\begin{defi} \label{dt2}

1. A category $\hrt$ whose objects are $\cu^{t=0}=\cu^{t\ge 0}\cap
\cu^{t\le 0}$, $\hrt(X,Y)=\cu(X,Y)$ for $X,Y\in \cu^{t=0}$,
 will be called the {\it heart} of
$t$. Recall (cf. Theorem 1.3.6 of \cite{bbd}) that $\hrt$ is abelian
(short exact sequences in $\hrt$ come from distinguished triangles in $\cu$).

2. $\cu^{t\ge l}$ (resp. $\cu^{t\le l}$) will denote $\cu^{t\ge
0}[-l]$ (resp. $\cu^{t\le 0}[-l]$).

3. Assume  that triangulated categories $\cu$ and $\du$ are endowed with  certain $t$-structures $t_{\cu}$ and $t_{\du}$, respectively.
Then we will say that an exact $F:\cu\to \du$ is 
$t$-exact if $F(\cu^{t_{\cu}\ge 0})\subset \du^{t_{\du}\ge 0}$ and $F(\cu^{t_{\cu}\le 0})\subset \du^{t_{\du}\le 0}$).

\end{defi}

\begin{rema}\label{rts}

1. Recall (cf. Lemma IV.4.5 in \cite{gelman}) that (\ref{tdec})
defines additive functors $\cu\to \cu^{t\le 0}:X\to A$ and $C\to
\cu^{t\ge 0}:X\to B$. We will denote $A,B$ by $X^{t\le 0}$ and
$X^{t\ge 1}$, respectively.

The triangle (\ref{tdec}) will be called the {\it t-decomposition} of $X$. If
$X=Y[i]$ for some $Y\in \obj\cu$, $i\in \z$, then we  denote $A$
by $Y^{t\le i}$ (it belongs to $\cu^{t\le 0}$) and $B$ by $Y^{t\ge
i+1}$ (it belongs to  $\cu^{t\ge 0}$), respectively. 
Objects of the type $Y^{t^\le i}[j]$ and
$Y^{t^\ge i}[j]$ (for $i,j\in \z$) will be called {\it
$t$-truncations of $Y$}.

2. We denote by $X^{t=i}$ the $i$th cohomology of $X$ with respect
to $t$, i.e., the object $(X^{t\le i})^{t\ge 0}$ (cf. part 10 of \S IV.4 of
\cite{gelman}). 

3.  The following statements are obvious (and well-known): $\cu^{t\le 0}={}^\perp
\cu^{t\ge 1}$; $\cu^{t\ge 0}= \cu^{t\le -1}{}^\perp$.

4. We will say that $t$ is non-degenerate if $\cap_{l\in \z}\cu^{t\ge l}=\cap_{l\in \z}\cu^{t\le l}=\ns$.

It is easily seen that 
the non-degeneracy $t$  is equivalent to the following: $M\in\obj \cu$ is zero whenever $M^{t=i}=0$ for all $i\in \z$.  


\end{rema}

Now let us recall an important result on the existence of $t$-structures. It will be convenient for us to use the following definition.

\begin{defi}\label{daisle}
Let $\cu$ be a triangulated category closed with respect to all small coproducts.

1. We  call a suspended class $D\subset\obj \cu$ a {\it preaisle} if it is closed with respect to all coproducts and is extension-closed.

2. For a $E\subset \obj \cu$ we will call the smallest preaisle containing $E$ the {\it preaisle generated by $E$}. 

\end{defi}  
  
  It turns out that "compactly generated" preaisles are {\it aisles} in the terms of \cite{talosa}.

  \begin{pr}\label{paisle}
Assume that $\cu$ is  closed with respect to  coproducts; let $E\subset \cu$ be a set of compact objects. Then the following statements are valid.

1. There exists a unique $t$-structure $t$ for $\cu$ such that $\cu^{t\le 0}$ is the preaisle generated by $E$. 

2. The corresponding truncation functors $-^{t\le 0}$,  $-^{t\ge 0}$, and $-^{t=0}$ (see Remark \ref{rts}(1,2)) respect coproducts.

3. The corresponding $\cu^{t\ge 0}$ equals $(\cup_{i> 0}E[i])^\perp$. 

\end{pr}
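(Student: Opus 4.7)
Write $L$ for the preaisle generated by $E$ and set $S := \bigcup_{i \ge 0} E[i]$. Since $L$ is suspended and $S$ is closed under $[1]$, the preaisle $L$ coincides with the extension-coproduct closure of $S$; moreover, the class $\{X : \cu(X, Y) = 0\}$ is closed under extensions and coproducts for any fixed $Y$, so $L^\perp = S^\perp = (\bigcup_{i \ge 0} E[i])^\perp$. Part 3 now follows by applying the same observation to $L[1]$ (the preaisle generated by $\bigcup_{i > 0} E[i]$) and invoking $\cu^{t\ge 0} = \cu^{t\le -1}{}^\perp = L[1]^\perp$ from Remark \ref{rts}(3). For uniqueness in part 1, any candidate $t$-structure with $\cu^{t\le 0} = L$ has $\cu^{t\ge 1} \subset L^\perp$ by orthogonality; conversely, for $Y \in L^\perp$, the hypothetical $t$-decomposition $A \to Y \to B[-1] \to A[1]$ has first map zero, so $B[-1] \cong Y \oplus A[1]$, placing $A[1]$ in both $\cu^{t\le -1}$ and $\cu^{t\ge 0}$; orthogonality forces $A = 0$, whence $Y \in \cu^{t\ge 1}$, so $\cu^{t\ge 1} = L^\perp$ is determined by $L$.

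The heart of the proof is existence of $t$-decompositions, which I produce by a small object argument. Given $X \in \obj \cu$, inductively build a sequence $A_0 \to A_1 \to \cdots$ in $L$ with compatible maps $\alpha_n : A_n \to X$ whose cones $C_n := \co(\alpha_n)$ satisfy: every morphism $s \to C_n$ with $s \in S$ becomes zero in $C_{n+1}$. Start with $A_0 = \bigoplus_{s \in S,\, f : s \to X} s$ and the canonical $\alpha_0$; at step $n$ set $B_n = \bigoplus_{s \in S,\, g : s \to C_n} s$, compose $B_n[-1] \to C_n[-1] \to A_n$ using the connecting map, and let $A_{n+1}$ be its cone, so that one has a triangle $A_n \to A_{n+1} \to B_n$; the composite $B_n[-1] \to A_n \to X$ vanishes (two consecutive arrows in a rotated distinguished triangle), which permits $\alpha_n$ to be extended to $\alpha_{n+1}$. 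The octahedral axiom produces a triangle $B_n \to C_n \to C_{n+1}$ in which $\cu(s, B_n) \to \cu(s, C_n)$ is surjective by design, so $\cu(s, C_n) \to \cu(s, C_{n+1})$ vanishes. Setting $A_\infty = \mathrm{hocolim}\, A_n$ and $C_\infty = \mathrm{hocolim}\, C_n$, compactness of the elements of $S$ yields $\cu(s, C_\infty) = \inli_n \cu(s, C_n) = 0$, whence $C_\infty \in S^\perp = L^\perp$; meanwhile the standard hocolim triangle $\bigoplus A_n \to \bigoplus A_n \to A_\infty$ exhibits $A_\infty$ as an extension of two $L$-objects (using that $L$ is suspended and coproduct- and extension-closed), so $A_\infty \in L$. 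Hocolim compatibility with the cone construction then converts the $\alpha_n$'s into a distinguished triangle $A_\infty \to X \to C_\infty \to A_\infty[1]$, which is the required $t$-decomposition.

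Part 2 is then formal: for a family $\{X_\alpha\}$, the coproduct of the individual $t$-decompositions is a $t$-decomposition of $\bigoplus X_\alpha$, since $L$ is coproduct-closed by definition and $L^\perp$ is coproduct-closed thanks to the compactness of the generators (so $\cu(s, \bigoplus Y_\alpha) = \bigoplus \cu(s, Y_\alpha) = 0$ whenever each $Y_\alpha \in L^\perp$); the uniqueness of $t$-decompositions gives the claim for $-^{t\le 0}$, and then for $-^{t\ge 0}$ and $-^{t=0}$ by the standard formulas. The principal technical hurdle is the small object argument, in particular the inductive extension of $\alpha_n$ to $\alpha_{n+1}$ and the invocation of hocolim compatibility with distinguished triangles; both are standard features of triangulated categories with arbitrary coproducts, but the bookkeeping must be done carefully.
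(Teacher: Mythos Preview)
Your proof is correct and essentially unpacks the argument behind the references the paper cites: the paper simply invokes Theorem~A.1 and Proposition~A.2 of \cite{talosa} for parts~1 and~2 and deduces part~3 from Remark~\ref{rts}(3), whereas you spell out the small-object argument that underlies those citations. So this is not a different route, just the cited proof written out.

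One point deserves a little more care. When you invoke the octahedral axiom on $A_n \to A_{n+1} \to X$ to produce the triangle $B_n \to C_n \to C_{n+1}$, the axiom does not on its face guarantee that the resulting map $B_n \to C_n$ is the canonical one, which is what you need for ``$\cu(s,B_n)\to\cu(s,C_n)$ is surjective by design''. The clean fix is to apply the octahedron instead to the composable pair $B_n[-1]\to C_n[-1]\to A_n$ (the first map being the shift of the canonical map, the second the connecting morphism of the triangle $A_n\to X\to C_n$): then $\co(B_n[-1]\to C_n[-1])=C_{n+1}[-1]$ with the canonical map built in, $\co(C_n[-1]\to A_n)=X$, $\co(\text{composite})=A_{n+1}$, and the octahedral triangle is exactly $A_{n+1}\to X\to C_{n+1}$, giving both $\alpha_{n+1}$ and the desired triangle $B_n\xrightarrow{\text{can}}C_n\to C_{n+1}$ at once. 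Alternatively, you can sidestep the hocolim of cones entirely: set $C:=\co(\alpha_\infty)$ and verify $\cu(s,C)=0$ directly from the long exact sequence, using that $\cu(s,A_\infty)\to\cu(s,X)$ is surjective (already from $A_0$) and that $\cu(s,A_\infty[1])\to\cu(s,X[1])$ is injective (any $f\colon s\to A_n[1]$ killed by $\alpha_n[1]$ factors through $C_n\to A_n[1]$, hence through $B_n\to A_n[1]$, hence dies in $A_{n+1}[1]$). Either way the argument goes through, and your acknowledgement that the bookkeeping is delicate is well placed.
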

\begin{proof} 1. This is Theorem A.1. of \cite{talosa}.

2. Immediate from Proposition A.2 of ibid. 

3. This is an easy consequence of Remark \ref{rts}(3). 


\end{proof}

  Lastly, let us recall the notion of gluing for triangulated categories and $t$-structures. 
  
  \begin{defi}\label{dglu}
1. The $9$-tuple $(\cu,\du,\eu, i_*, j^*, i^*, i^!, j_!, j_*)$  is called a
{\it gluing datum} if it satisfies the following conditions.

(i) $\cu,\du,\eu$ are triangulated categories; $i_*:\du\to\cu$,
$j^*:\cu\to\eu$, $i^*:\cu\to\du$, $i^!:\cu\to\du$, $j_*:\eu\to\cu$,
$j_!:\eu\to\cu$ are exact functors.

(ii) $i^*$ (resp. $i^!$) is left (resp. right) adjoint to $i_*$;
$j_!$ (resp. $j_*$) is left (resp. right) adjoint to $j^*$.

(iii)  $i_*$ is a full embedding; $j^*$ is isomorphic to the
localization (functor) of $\cu$ by
$i_*(\du)$.

(iv) For any $M\in \obj \cu$ the pairs of morphisms $j_!j^*M \to M
\to i_*i^*M$ and $i_*i^!M \to M \to j_*j^*M$ can be completed to
distinguished triangles (here the connecting
morphisms come from the adjunctions of (ii)). Moreover, these completions are functorial in $M$.


(v) $i^*j_!=0$; $i^!j_*=0$.

(vi) All of the adjunction transformations $i^*i_*\to \id_{\du}\to
      i^!i_*$ and $j^*j_*\to \id_{\eu}\to j^*j_!$ are isomorphisms of
      functors.

2. In the setting of part 1, assume also that $\du$ and $\eu$ are endowed with  certain $t$-structures $t_{\du}$ and $t_{\eu}$, respectively.
 Then we will say that a $t$-structure $t=t_{\cu}$ for $\cu$ is {\it glued from}  $t_{\du}$ and $t_{\eu}$ if 
we have the following: $ \cu^{t_{\cu}\le 0}=\{M\in \obj \cu:  j^*M\in \eu^{t_{\eu}\le 0}\text{ and }  
i^*M\in \du^{t_{\du}\le 0}\}$, and $ \cu^{t_{\cu}\ge 0}=\{M\in \obj \cu:  j^*M\in \eu^{t_{\eu}\ge 0}\text{ and }  
i^!M\in \du^{t_{\du}\ge 0}\}$.  In this case we will also say that $\cu$, $\du$, and $\eu$ are {\it endowed with compatible $t$-structures}.


\end{defi}

\begin{rema}\label{rglu}
1. Our definition of a gluing datum is far from being the 'minimal' one. Actually, it is 
 well known (see Chapter 9 of \cite{neebook}) that
a gluing datum can be uniquely recovered from an inclusion
$\du\to \cu$ of triangulated categories that admits both a left
and a right adjoint functor.

Our notation for the connecting functors 
is (certainly) coherent with Theorem \ref{tcisdeg} 
 below. 

2. The following fact is well-known (see Proposition 1.5.3(II) of \cite{bmm} and Proposition 2.3.3 of \cite{degcis}): in the gluing setting of Definition
 \ref{dglu}(1) $t_{\cu}$  is glued from  $t_{\du}$ and $t_{\eu}$
if and only if $i_*$ and $j^*$ are $t$-exact.
\end{rema}

  \subsection{On motives over a base: reminder}\label{sdm}

We recall some basic properties of the triangulated categories of Beilinson motives described by Cisinski and D\'eglise. 
Recall that all the schemes we mention are reasonable (see \S\ref{snotata}), and all the morphisms considered are separated. 

In the following statement we assume that
$f:X\to Y$ is a  
finite type morphism of schemes; $S$ is a finite type $S_0$-scheme.

\begin{theo}\label{tcisdeg}
 The following statement are valid.

\begin{enumerate}

\item\label{imotcat} For any (excellent separated finite dimensional) $X$ a tensor triangulated $\q$-linear category $\dmx$ with the unit object $\q_X$ is defined; it is closed with respect to arbitrary small coproducts and zero if $X$ is empty.

$\dmx$ is the category of  {\it Beilinson motives} over $X$, as described (and thoroughly studied) in \S14--15 of \cite{degcis}.

\item\label{imotgen}
The (full) subcategory $\dmcx\subset \dmx$ of compact objects is tensor triangulated, and  $\q_X\in \obj \dmcs$. $\dmcx$ 
generates $\dmx$ as (its own) localizing subcategory. 

\item\label{ivoemot} If $X$ is the spectrum of a perfect field $F$, $\dmcx$ is canonically isomorphic to the category $\dmgm=\dmgm(F)$ of Voevodsky's geometric motives (with rational coefficients) over $X$ (see \cite{1}). Moreover, the whole $\dmx$ is isomorphic to the category $\dm(F)$ considered in \cite{degmod}.


\item\label{imotfun}  
The following functors
 are defined:
$f^*: \dm(Y) \leftrightarrows \dmx:f_*$ and $f_!: \dmx \leftrightarrows \dmy:f^!$; $f^*$ is left adjoint to $f_*$ and $f_!$ is left adjoint to $f^!$.

We call these the {\bf motivic image functors}.
Any of them (when $f$ varies) yields a  $2$-functor from the category of 
(separated finite-dimensional excellent) schemes
with  morphisms of finite type to the $2$-category of triangulated categories.
Besides,
all motivic image functors preserve compact objects (i.e. they could be restricted to the subcategories $\dmc(-)$); they also commute with arbitrary (small) coproducts. 

\item\label{iexch} 
For a Cartesian square
of finite type 
morphisms
$$\begin{CD}
Y'@>{f'}>>X'\\
@VV{g'}V@VV{g}V \\
Y@>{f}>>X
\end{CD}$$
of schemes we have $g^*f_!\cong f'_!g'{}^*$ and $g'_*f'{}^!\cong f^!g_*$.

\item\label{itate}  
There exists a
Tate object $\q(1)\in\obj\dmcx$; tensoring by it yields an  exact Tate twist functor $-(1)$ on $\dmx$.
This functor is an auto-equivalence of $\dmx$; we will denote the inverse functor by $-(-1)$.

 Tate twists
commute with all motivic image functors mentioned (up to an isomorphism of functors).

Besides, for $X=\p^1(Y)$ there is a functorial isomorphism $f_!(\q_{\p^1(Y)})\cong \q_Y\bigoplus \q_Y(-1)[-2]$. 

\item\label{iupstar}  $f^*$ is symmetric monoidal; $f^*(\q_Y)=\q_X$.

\item \label{ipur}

$f_*\cong f_!$ if $f$ is proper;
$f^!(-)\cong f^*(-)(s)[2s]$ 
 if $f$ is smooth  
  of  dimension $s$ (i.e. it is of relative dimension $s$ everywhere; see \S\ref{snotata}). 

If $f$ is an open immersion,
 we just have $f^!=f^*$.

\item \label{ifuh}
If $f$ is a finite universal homeomorphism then the functors $f^*=f^!$ and $f_*=f_!$  are equivalences of categories.
In particular, this is the case where $X=Y_{red}$ (the reduced scheme associated with $Y$).
 
Moreover, $g^*$ is an equivalence of categories also in the case where $g$ is an inverse limit of finite universal homeomorphisms. 

\item \label{ivanmc}

Assume that $X$ (or  $X_{red}$ as defined in \S\ref{snotata})) is regular of Krull dimension $d$ (everywhere). Then for any $i,j\in \z$ such that $d-i<-j$ we have 
$\dm(X)(\q_X,\q_X(i)[j])=\ns$.

\item \label{ipura} Assume that 
 $f$ is an immersion of schemes everywhere of (Krull) codimension $c$, whereas $Y$ and  $X_{red}$  are regular.
 Then $\q_{X}(-c)[-2c]\cong f^!(\q_Y)$.

\item\label{iglu}

If $i:Z\to X$ is a closed embedding, $U=X\setminus Z$, $j:U\to X$ is the complementary open immersion, then
the motivic image functors yield a  gluing datum for $\dm(-)$  (in the sense of 
Definition \ref{dglu}; one should set $\cu=\dmx$, $\du=\dm(Z)$, and $\eu=\dmu$ in it).


\item\label{ridmot} The functor $g^*$ on $\dmc(A)\subset \dm(A)$
 can be defined for any (separated) morphism $g: B\to A$ not necessarily of finite type; we have $g^*\q_A=\q_B$.  This definition respects the composition for morphisms and  satisfies the first part of assertion \ref{iexch}. Moreover, $g^*$ "respects $-_*$" if $g$ is a limit of an essentially affine system of 
open (dense) embeddings. 


\item\label{icont}
Assume that a scheme  $X$  is the limit of an essentially affine (filtering) projective system of  schemes $X_\be$ (for $\be\in B$). Then $\dmcx$ is isomorphic to the $2$-colimit 
 of the categories $\dmc(X_\be)$; in these isomorphisms all the connecting functors are given by the corresponding $(-)^*$ (cf. the previous assertion).

\item\label{imotalt} For any finite type $Z/S$  there exists an open dense $U\subset Z$ and a finite morphism $U'\to U$ such that for the composite morphisms 
$u:U\to S$ and $u':U\to S$ the following is valid:
$u_!\q_{U}$ is a retract of
 $u'_!\q_{U'}$ and $U'$ is open in some regular scheme that is projective over $S$.

 \item\label{igenc}
 $\dmcs$ 
  is generated 
   by $\{ x_*(\q_X)(r)\}$, where $x:X\to S$ runs through all projective morphisms  (of finite type) such that $X$ is regular, $r\in \z$.  

\end{enumerate}

\end{theo}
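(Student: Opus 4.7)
The plan is that nearly every assertion in this theorem already appears in the work of Cisinski and D\'eglise, so the proof will essentially consist of assembling precise references to \cite{degcis} and verifying that the formulations there specialize to the statements as written here.

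I would first handle the structural items all at once: \ref{imotcat}, \ref{imotgen}, \ref{imotfun}, \ref{iexch}, \ref{itate}, \ref{iupstar}, and \ref{iglu} together amount to saying that $X\mapsto \dmx$ is a $\q$-linear motivic triangulated category (in the axiomatic sense of \cite{degcis}) that satisfies the six-functor formalism with compact generation; this is exactly what is established for Beilinson motives in \S14--15 of loc.\ cit. In particular proper base change, the $\p^1$-decomposition, the monoidality of $f^*$, compactness-preservation of all six functors, and the localization (gluing) property are all packaged there. Assertion \ref{ipur} is the relative purity isomorphism for smooth morphisms in that formalism, while assertion \ref{ivoemot} follows from the comparison between Beilinson motives and Voevodsky's motives (with rational coefficients) over a perfect field that occupies \S16 of \cite{degcis}.

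Next, the continuity-type items \ref{ifuh}, \ref{ridmot}, and \ref{icont} are all consequences of the continuity property of Beilinson motives (\S4.3 and \S15.1 of loc.\ cit.); the topological invariance under finite universal homeomorphisms in \ref{ifuh} moreover uses that such morphisms induce equivalences on small \'etale sites, and the extension to an inverse limit of such morphisms is formally the same continuity argument. Assertion \ref{ipura} is the absolute purity theorem for Beilinson motives (Theorem 14.4.1 of \cite{degcis}). Assertion \ref{ivanmc} is a translation of the standard vanishing for rational $K$-theory of regular schemes via the Beilinson identification of $\dm(X)(\q_X,\q_X(i)[j])$ with (a summand of) $K_{2i-j}(X)_\q$; this too is in \S14 of loc.\ cit.

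The only parts that demand genuine geometric input beyond \cite{degcis} are \ref{imotalt} and \ref{igenc}. For \ref{imotalt} the plan is to invoke de Jong's alteration theorem (applied to $Z_{red}$): after shrinking to a dense open $U\subset Z$ I produce a finite generically \'etale cover $U'\to U$ such that $U'$ embeds as an open subscheme of a regular scheme projective over $S$; then the standard trace-and-degree argument (valid because we work $\q$-linearly, so any finite surjective generically \'etale map has invertible degree) exhibits $u_!\q_U$ as a retract of $u'_!\q_{U'}$. Assertion \ref{igenc} then follows by noetherian induction on dimension: by \ref{imotgen} one reduces to generators of the form $u_!\q_Y(r)$ for finite type $u:Y\to S$; stratifying $Y$ into regular locally closed pieces and using the gluing triangles of \ref{iglu} to reduce to each stratum, I apply \ref{imotalt} (to pass from $u_!$ to a projective pushforward of a regular scheme up to a retract) and \ref{ipur} (to convert smooth pullback shifts into the Tate twists $(r)$). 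The main obstacle I anticipate is purely bookkeeping in these last two items: ensuring that the shifts produced by \ref{ipur} are absorbed into the allowed range $r\in\z$, and that the iterated extensions furnished by the gluing triangles indeed stay within the Karoubi-closed triangulated subcategory generated by the proposed system.
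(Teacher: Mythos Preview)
Your proposal is correct and takes essentially the same approach as the paper: both treat the theorem as a compilation of results from \cite{degcis}, with only a handful of items (\ref{ifuh}, \ref{ivanmc}, \ref{imotalt}) singled out for a separate sentence or two. The paper is slightly more specific in places---it pins the $K$-theory vanishing in \ref{ivanmc} to Soul\'e's Theorem~8(1) in \cite{souoper} and cites Proposition~2.2.1(14) of \cite{bmm} for the inverse-limit case of \ref{ifuh}---while you give a more explicit Noetherian-induction sketch for \ref{igenc}, which the paper simply absorbs into the blanket reference to \cite{degcis} and \S1.1 of \cite{brelmot}; but these are differences of citation granularity, not of method.
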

\begin{proof}

Most of these statements were stated in the introduction of \cite{degcis} (and proved later in ibid.); see \S1.1 of 
 \cite{brelmot} for more details. Besides, the fact that  $g^*$ "commutes with $-_*$" if $g$ is a limit of (an essentially affine system of) open embeddings (see assertion \ref{ridmot})
 is a particular case of Proposition  4.3.14 of \cite{degcis}.

 
 Assertion \ref{ifuh} for finite type universal homeomorphisms follows from Proposition 2.1.9  of \cite{degcis} (note that we can apply the result cited by 
Theorem 14.3.3 of ibid.) via applying the 
 corresponding adjunctions.  The second half of the assertion is an easy consequence of Proposition 2.2.1(14) of \cite{bmm}.
 
 In assertion \ref{ivanmc} (as well as in the following one) one may replace $X$ with $X_{red}$ according to assertion \ref{ifuh}. 
 In this case Corollary 14.2.4 of ibid. yields that the group in question is isomorphic to $Gr_{\gamma}^iK_{2i-j}(X)$. The vanishing of this motivic cohomology group for a regular $X$ of Krull dimension $\le d$
 is well-known; see (the  "$j<-n$ part" of) Theorem 8(1) of \cite{souoper}.  

Assertion \ref{imotalt} 
can be easily obtained via the argument used in the proof of Proposition 15.2.3 of \cite{degcis} (see also \S4.4 of ibid.). 

\end{proof}

  \section{On Borel-Moore motives of $S$-schemes and coniveau spectral sequences}\label{sbm}

  In \S\ref{smgeff} we  introduce 
certain Borel-Moore motives for 
 finite type $S$-schemes and 
  discuss their properties; the most important of them is Theorem \ref{tmgeff}(II.1). We introduce some more tools for studying their functoriality in \S\ref{sbmp}.
  
  In \S\ref{scss} we study in detail the coniveau spectral sequences for the cohomology of (the Borel-Moore motives of) $S$-schemes (note that in the proof of Theorem \ref{thts}(I) below only part I.1 of  Proposition \ref{pcss} is applied).

\subsection{On Borel-Moore motives for 
$S$-schemes: the main properties}\label{smgeff}

We 
introduce 
certain Borel-Moore motives for (arbitrary) finite type $S$-schemes (and recall some notions introduced in previous papers on Beilinson motives).

\begin{rema}\label{rmgbm}

\begin{enumerate}
\item\label{img}

 For a 
finite type  $x:X\to S$ we set $\mgbms(X)=x_!\q_X$ (this is a certain {\it Borel-Moore} motif of $X$; cf. Remark \ref{rnat}(2) below). 

These objects will be very important for us. We will study their "functoriality" in detail below. In the proof of the next theorem  
it suffices to apply the adjunctions of the type  $j_!\dashv j^*$ 
for an open embedding $j:U\to X$. 

\item\label{ino} Yet (in order to introduce some notation) we recall now that in \cite{degcis} for any smooth morphism $f:X\to Y$ the functor $f_\sharp:\dmcx\to \dmcy$ (and also $\dm(X)\to \dm(Y)$) left adjoint to $f^*$  was considered (and played an important role; we will say more on this below). Now, Theorem \ref{tcisdeg}(\ref{imotfun},\ref{ipur}) yields that for a smooth $f$ everywhere of dimension $s$ we have an isomorphism $f_\sharp \to f_!(-)\lan s\ra$. Moreover, if $f$ is an open embedding
(and so, $s=0$) then $f_!=f_\sharp$ by construction.

So, for a smooth morphism $g$ 
 we will denote by $M_g$ the counit of the adjunction $g_\sharp\dashv g^*$.
Note that for smooth morphisms $Y'\stackrel{g'}{\to} Y\stackrel{g}{\to} X$, $O\in \obj \dm(X)$ we have $M_{g\circ g'}(O)=M_g(O)\circ g_\sharp(M_{j'}(j^*O))$.

Besides, for any (separated) $g$ we will denote by $M^g$ the unit of the adjunction $g^*\dashv g_*$.


\item\label{itw} It will be convenient for us to use the following notation for "shifted Tate twists": for any $r\in \z$ we  denote the functor $-(r)[2r]$ (resp. $-(r)[r]$) by $-\lan r\ra$ (resp. by $-\{r\}$; both of these functors will play important roles).

 \item\label{ichows} The following definition was central in \cite{hebpo} and \cite{brelmot}: $\chows$ is the Karoubi-closure of $\{x_!(\q_X)\lan r\ra\}= 
\{x_*(\q_X)\lan r\ra\}$ in $\dmcs$; here $x:X\to S$ runs through all finite type projective  morphisms such that $X$ is regular, $r\in \z$.

We define $\chowes\subset \chows$ as the Karoubi-closure in $\dmcs$ of the class  $\mgbm(X)\lan \dimz(X)\ra$ for $X$ running through (regular) projective 
 $S$-schemes (note that this definition depends on $\delta$ in the obvious way).

One can easily check that one obtains 'the usual' Chow motives and effective Chow motives this way in the case where $S$ is the spectrum of a perfect field (and $\delta$ is the Krull dimension of schemes; cf. Theorem \ref{tcisdeg}(\ref{ivoemot})). Moreover, if $S$ is a spectrum of an imperfect field, one obtains motives over its perfect closure; cf. Theorem \ref{tcisdeg}(\ref{ifuh}).

\end{enumerate}

\end{rema}


Now let us establish 
the main properties of $\mgbm(-)$.

\begin{theo}\label{tmgeff}

Denote by $\dms^{\le 0}$ the preaisle generated by $\{
\chowe(S)\{i\},\ i\in \z\}$ (see Definition \ref{daisle}).
 Let $x:X\to S$ be a finite type morphism;  denote  $\dimz(X)$ by $d$. 

I The following statements are valid. 

\begin{enumerate}
\item\label{igys}
Let $i:Z\subset X$ be a closed embedding; 
 denote by $j:U\to X$ the complementary open embedding. 
 Then (in the notation introduced in the previous remark) there exists a distinguished triangle 
 \begin{equation}\label{emgys}
\mgbms(U)\stackrel{x_!(M_j(\q_X))} 
{\to}\mgbms(X) 
{\to} \mgbms(Z). 
\end{equation}
Moreover, if $Z$ and $X$ are regular, $Z$ is everywhere of codimension $c$ in $X$, then there is also a distinguished triangle
\begin{equation}\label{emgeffc} 
z_* \q_Z\lan -c\ra \to x_* \q_X\to u_*\q_U
\end{equation} 

\item\label{idecomp} $\dms^{\le 0}$ contains $\mgbms(X)\lan d\ra$. 

\end{enumerate}

II 
Assume that $X$ is irreducible;  consider the generic point  $X_0$ of $X$ as the (inverse) limit of open affine subschemes $X_i$ of $X$ (where $i$ runs through a filtering projective system $I$). For  an 
$H\in \obj \dms$ we define $H(X_0)$ as  $\inli \dmcs(\mgbms(X_i),H)$ 
(where for $j_{i_1i_2}$ being  the connecting maps $X_{i_1}\to X_{i_2}$ the corresponding morphisms in the direct limit 
are induced by $M_{j_{i_1i_2}}$; see
Remark \ref{rmgbm}(\ref{ino})).

Then $H(X_0)=\ns$ in the following cases.

1. $H=y_*\q_Y(s-d+e)[s-2d+2e+r]$ for some 
finite type $y:Y\to S$, where $Y$ is regular 
 connected, $e=\dimz(Y)$, $r>0, s\in \z$. 

2. $H\lan d \ra [-1]$ belongs to  $\dms^{\le 0}$.  

\end{theo}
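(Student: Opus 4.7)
The plan for part I is essentially formal six-functor gymnastics. For I.1, apply $x_!$ to the localization triangle $j_!j^*\q_X\to\q_X\to i_*i^*\q_X$ coming from the gluing datum \ref{iglu}, and use the identifications $x_!j_! = u_!$ and $x_!i_* = x_!i_! = z_!$ (recall that $i_*=i_!$ for closed immersions by \ref{ipur}). The second triangle of I.1 is dual: push forward $i_*i^!\q_X\to\q_X\to j_*j^*\q_X$ by $x_*$ and invoke the purity isomorphism $i^!\q_X\cong\q_Z\lan -c\ra$ from \ref{ipura}. For I.2, induct on $d=\dimz X$: the alteration theorem \ref{imotalt} yields an open dense $U\subset X$, a finite $U'\to U$ realising $\mgbms(U)$ as a retract of $\mgbms(U')$, and an open embedding $U'\hookrightarrow \bar U'$ into a regular projective $S$-scheme (of $\delta$-dimension $d$). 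Applying the first Gysin triangle of I.1 to $X=U\sqcup Z$ (so $\dimz Z<d$), and using that the preaisle $\dms^{\le 0}$ is closed under positive Tate shifts $\lan m\ra$ (since $\lan 1\ra = \{1\}[1]$ sends generators to generators) and under $[1]$, reduces the $Z$-piece to the inductive hypothesis. For the $U$-piece I would pass to $\mgbms(U')$ using Karoubi-closedness of the compactly generated aisle $\dms^{\le 0}$, then apply a second Gysin triangle for $U'\subset\bar U'$: the closed complement is again of smaller $\delta$-dimension, while $\mgbms(\bar U')\lan d\ra$ lies in $\chowes\subset\dms^{\le 0}$ by the very definition of $\chowes$.

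Part II.2 should follow a parallel reduction. A direct index computation gives $H\lan d\ra[-1] = y_*\q_Y\lan s+e\ra[r-1-s]$; since $\dms^{\le 0}$ is closed under each $\{i\}$, $i\in\z$, and under non-negative suspensions, one is reduced to proving that $y_*\q_Y[e]$ lies in $\dms^{\le 0}$. When $y$ is proper with $Y$ regular projective this is immediate from the definition of $\chowes$. For general (regular) $Y$, choose a compactification $Y\hookrightarrow\bar Y$ --- passing to a finite retract through \ref{imotalt} to ensure $\bar Y$ can be taken regular projective --- and apply the second (purity-based) triangle of I.1 to decompose $y_*\q_Y$ into pieces supported on regular boundary strata of strictly smaller $\delta$-dimension; induction on this stratification then finishes the argument.

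The really substantive content is II.1. My plan is to first use the adjunction $x_{i!}\dashv x_i^!$ and the base-change isomorphism $x_i^!y_*\cong\pi_{X,i*}\pi_{Y,i}^!$ of \ref{iexch}, followed by the continuity theorem \ref{icont}, to identify
\[
H(X_0)\ \cong\ \dm(P_0)(\q_{P_0},\ \pi^!\q_Y(s-d+e)[s-2d+2e+r]),
\]
where $P_0=X_0\times_S Y$ and $\pi:P_0\to Y$ is the projection. The crucial step is to control $\pi^!\q_Y$: although $\pi$ is neither smooth nor finite in general, combining the alteration theorem \ref{imotalt} on $X$ with passage to a dense open on which $X_i$ is smooth over $S$ makes $\pi$ essentially smooth of relative dimension $d-\dimz S$, and purity \ref{ipur} then gives $\pi^!\q_Y\cong\q_{P_0}\lan d-\dimz S\ra$. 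The group then collapses to a motivic cohomology group of a regular scheme, into which the hypothesis $r>0$ is designed to push us into the vanishing range of \ref{ivanmc}. I expect the hardest part of the whole theorem to be the dimension-function bookkeeping here --- carefully tracking how $\lan d\ra$, $\lan e\ra$, and the relative dimension of $\pi$ interact through the base change and the passage to the limit, and verifying that the alteration/retract reductions on $X$ are compatible with the direct limit over $i$ --- rather than the motivic cohomology vanishing itself, which should fall out cleanly once all the indices are pinned down.
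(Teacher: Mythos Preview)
Your arguments for I.1 and I.2 are correct and essentially identical to the paper's.

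There are, however, genuine gaps in Part II.

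For II.2 your argument runs in the wrong direction. You take the specific $H$ of II.1, compute $H\lan d\ra[-1]$, and verify (extending from projective regular $Y$ to arbitrary regular $Y$ via the second triangle of I.1) that it lies in $\dms^{\le 0}$. But this only shows that every $H$ of the form in II.1 satisfies the \emph{hypothesis} of II.2; it does not establish the conclusion of II.2 for an arbitrary $H$ with $H\lan d\ra[-1]\in\dms^{\le 0}$. The paper does the opposite reduction: since each $\mgbms(X_i)$ is compact, the assignment $H\mapsto H(X_0)$ is homological in $H$ and commutes with coproducts (Corollary~\ref{cres}(I)), so the class of $H'\in\obj\dms$ with $(H'[m]\lan -d\ra)(X_0)=0$ for all $m\ge 1$ is itself a preaisle. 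It therefore suffices to check this on the generators $\chowes\{s\}$ of $\dms^{\le 0}$, and those are retracts of $y_*\q_Y\lan e\ra\{s\}$ with $Y$ regular \emph{projective}; unwinding the twists gives exactly the $H$ of II.1 with $r=m$. Thus II.2 is deduced \emph{from} II.1, not the other way round.

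For II.1 the gap is your claim that $\pi:P_0\to Y$ can be made essentially smooth. Passing to a dense open of $X$ and applying Theorem~\ref{tcisdeg}(\ref{imotalt}) produces a regular $U'$, but regularity is not smoothness over $S$: take $S$ the spectrum of a discrete valuation ring and $X$ its closed point; then no dense open of $X$, nor any finite cover of one, is smooth over $S$, and $\pi$ is a closed immersion for which $\pi^!\q_Y$ is not a twist of $\q_{P_0}$ unless the special fibre of $Y$ happens to be regular. The paper handles this differently: after shrinking $X$ to a quasi-projective open, factor $x$ as an immersion $f:X\hookrightarrow S'$ followed by a smooth $h:S'\to S$, and use purity to absorb $h$, thereby reducing to the case where $x$ is an immersion. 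One then stratifies $Y$ (via the second triangle of I.1) into regular connected locally closed pieces, each lying either over $X$ or over $S\setminus X$. The latter contribute nothing; for the former one may assume $X=S$, so that $X_0$ is the generic point of $S$, and the base change $j_0^*y_*\cong y_{0*}j_{0Y}^*$ reduces the stalk to $\dmc(Y_0)(\q_{Y_0},\q_{Y_0}(s-d+e)[s-2d+2e+r])$ with $Y_0=X_0\times_S Y$ regular of Krull dimension $e-d$, where the vanishing of Theorem~\ref{tcisdeg}(\ref{ivanmc}) applies directly.
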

\begin{proof}
I.1 
We can assume that $X$ is connected. Theorem \ref{tcisdeg}(\ref{imotfun},\ref{iglu},\ref{ipur}) yields  distinguished triangles $j_!j^*\q_X(\cong j_!\q_U)\stackrel{M_j(\q_X)}{\to} \q_X \stackrel{M^i(\q_X)}{\to} i_!\q_Z\cong i_*i^*\q_X $ and $i_!i^!\q_X \to 
\q_X\to  j_*j^*\q_X(\cong 
 j_*\q_U) $.
Applying the functor $x_!$ to the first triangle 
we obtain (\ref{emgys}). 

Next, if $Z$ and $X$ are regular, then $i_!i^!\q_X\cong i_*\q_Z\lan -c \ra$ (see Theorem \ref{tcisdeg}(\ref{ipura})). Hence the  application of $x_*$ to our second distinguished triangle 
yields (\ref{emgeffc}).

2. 
 We prove the statement by induction on $d$. 

For our $X$ choose an open dense $U\subset X$ as in Theorem \ref{tcisdeg}(\ref{imotalt}). 
 Since $\dimz (X\setminus U)< d$, 
  we can assume that $\mgbms(X \setminus U)\lan d\ra \in \dms^{\le 0}$. 
Hence (\ref{emgys}) 
yields that $\mgbm(X)\lan d\ra\in \dms^{\le 0}$ if and only if $\mgbm(U)\lan d\ra\in \dms^{\le 0}$. 

So, we replace $X$ by $U$. Theorem \ref{tcisdeg}(\ref{imotalt}) yields that it suffices to consider the case where $X$ is open dense in some regular $X'$ that is projective over $S$. Applying 
"the birationality of $\mgbm(X)\lan d\ra\in \dms^{\le 0}$" again we obtain the result. 


II.1. Denote the connecting morphisms $X_i\to X$ by $j_i$ for all $i\in I\cup\ns$. 
By Theorem \ref{tcisdeg}(\ref{imotfun},\ref{ipur},\ref{itate}) 
the group in question is isomorphic to $$\begin{gathered} \dmc(X)(j_{i!}\q_{X_i},x^!y_*\q_Y(s-d+e)[s-2d+2e+r])\\  \cong \inli_{i\in I}\dmc(X_i)(\q_{X_i},j_i^*x^!y_* 
\q_Y(s-d+e)[s-2d+2e+r]) .\end{gathered} $$ 
 Applying Theorem \ref{tcisdeg}(\ref{icont},\ref{ridmot}) we transform this into
$\dmc(X_0)(\q_{X_0},j_0^*x^!y_*\q_Y(s-d+e)[s-2d+2e+r]) $. 

Now, let us make certain reduction steps.

Certainly, we can assume that $X$ is quasi-projective over $S$. Consider a factorization of $x$ as $X\stackrel{f}{\to}S'\stackrel{h}{\to}S$ where
 $h$ is smooth of dimension $q$,
 $f$ is an 
embedding, $S'$ is connected, and consider the corresponding diagram
$$ \begin{CD} 
 Z@>{f_Y}>> Y'@>{h_Y}>> Y\\
 @VV{z_X}V
@VV{y'}V@VV{y}V \\
    X@>{f}>> S' @>{h}>> S
 \end{CD}$$
 (the upper row is the base change of the lower one to $Y$).
 Then we have $x^!y_*\q_Y\lan e\ra= f^!h^!y_*\q_Y\lan e\ra\cong f^! y'_*h_Y^!\q_Y\lan e\ra$
 (by Theorem \ref{tcisdeg}(\ref{iexch})). Parts \ref{ipur} and \ref{iupstar} of the theorem allow to transform this into 
 $f^! y'_*\q_{Y'}\lan e+q\ra$. Hence below we may assume  $x$ is an embedding (since we can replace $S$ by $S'$ in our assertion; note that $e+q=\dimz(Y')$). Besides, the isomorphism $x^!y_*\cong z_{X*}z_Y^*$ for $z_Y=h_Y\circ f_Y$ yields that the group in question is zero if $Y$ lies over 
 $S\setminus X$ (considered as a set); see Theorem \ref{tcisdeg}(\ref{imotcat}).
 
 Next, we  note that  it suffices to verify the statement for $Y$ replaced by the components of some its regular   connected stratification. 
Being more precise,  
  we present $Y$ as $\cup_{1\le i \le n} Y_l$,
 where $n\ge 0$, $Y_l$ ($1\le l\le n$) 
 are pairwise disjoint 
 subschemes of $S$, and each $Y_l$ is open in $\cup_{i\ge l} Y_i$; we assume that all $Y_l$ are regular and connected. 
Indeed, 
 for such a stratification (\ref{emgeffc}) implies that  $y_*\q_Y\lan e\ra$ belongs to the extension-closure of $y_*j_{l*}\q_{Y_l}\lan \dimz(Y_l)\ra$; 
 here $j_l:Y_l\to Y$  are the corresponding embeddings. 

Now, we can choose a stratification  
 of this sort such that each of $Y_l$  lies either over $X$ or over $S\setminus X$. 
  Therefore it suffices to verify our assertion in the case where $y$ factors through $x$. Moreover, since $x^!x_*$ is the identity functor on $\dmc(X)$ (in this case; see Theorem \ref{tcisdeg}(\ref{iglu})), we may also assume that $X=S$. 
 
 Consider the following Cartesian square: $$ \begin{CD} 
 Y_0@>{j_{0Y}}>>  Y\\
 @VV{y_{0}}V
@VV{y}V \\
    X_0@>{j_0}>> S
 \end{CD}$$
  We have $j_0^*y_*\q_Y 
  \cong  y_{0*}j_{0Y}^*\q_Y 
  = y_{0*}\q_{Y_0}$ 
  (see Theorem \ref{tcisdeg}({ridmot})). Hence the adjunction 
  $  y_{0}^*\dashv y_{0*}$ 
	yields that the group in question is isomorphic to $\dmc(Y_0)(\q_{Y_0},\q_{Y_0}(s-d+e)[s-2d+2e+r])$. Since the Krull dimension of $Y_0$ is $e-d$ (unless $Y_0$ is empty; see \S\ref{sdimf}),  Theorem \ref{tcisdeg}(\ref{ivanmc}) 
  yields our assertion.

2. 
 Since $\mgbms(Y)$ is compact in $\dms$ for any finite type $Y/S$, 
 assertion I.2  easily yields the following (see  Corollary \ref{cres}(I) below): 
 it suffices to consider the case where $H\lan d \ra=\mgbms(Y)\lan \dimz(Y)\ra \{s\}[r]$ for some $s\in \z$, $r> 0$, and a regular  connective $Y$ that is projective over $S$. 
Hence the statement follows from the previous assertion.


\end{proof}

\subsection{On "smooth models" for Borel-Moore motives of subschemes}\label{sbmp}

The main difficulty of dealing with Borel-Moore motives as described above is that they are not "strictly functorial" with respect to motivic pullbacks. One of the ways to overcome this difficulty (proposed by F. Deglise) 
 is to study the  functoriality of the first isomorphism in Theorem \ref{tcisdeg}({iexch}) and of the gluing distinguished triangles.
In the current method we use another method; so we introduce certain alternative descriptions for 
$\mgbmx(T)$ for a  subscheme $T$ of $X$. These objects will be connected with $\mgbmx(T)$ by certain canonical isomorphisms. 
Thus in the 
next subsection we could have applied Theorem \ref{tmgeff}(I.\ref{igys}) instead of 
Proposition \ref{pmg} below; this substitute would just have  made the corresponding morphisms between   $\delta$-coniveau spectral sequences 
non-canonical. Note also that the corresponding (weakened) version of the proposition is quite  sufficient for the proof of Theorem \ref{thts} 
(I) (below).

Let us start with a certain reminder and remarks.

\begin{rema}\label{rsharp}

1. For a smooth $x:X\to S$ in ibid. the object $\mgs(X)=f_\sharp \q_X$ (see Remark \ref{rmgbm}(\ref{ino})) was considered in \cite{degcis}. The idea is that the cohomology of  $\mgs(X)$ with coefficients in an $H\in \obj \dms$ is isomorphic to the cohomology of $\q_X$ with coefficients in $f^*H$, whereas for $H$ representing (for example) motivic or \'etale cohomology over $S$ the object $f^*H$ represents the corresponding cohomology over $X$. So, $\mgs(X)$ "yields the cohomology of $X$".

Now we note that for   a smooth and equidimensional $x$ the motif  $\mgs(X)$ differs from $\mgbms(X)$ only by a twist (and it could have made some sense to modify the notation of Definition \ref{dcoh}(\ref{i2}) by this twist). Besides, we also have a similar relation for the $H$-cohomology in the case where $X,S$ are smooth (and equidimensional) over some common base $S'$ and $H$ "comes from $S'$ via $f^*$" (for $f:S\to S'$). 
We also have a similar relation of the $\delta$-coniveau spectral sequences for $H^*(\mgbm(X))$ that we will study below (see Proposition \ref{pcss}) to the ones for the 'usual' $H$-cohomology of $X$. In particular, one can apply this observation to the study of \'etale and motivic cohomology (since those are defined over $\spe \z$). 
So, studying the cohomology of $\mgbm(-)$ (as we do in the current paper) and its functoriality is quite actual. 

Moreover, Borel-Moore motives are related to the 
{\it Borel-Moore motivic homology} and $K'$-theory; see \S I.IV.2.4.10-5.3 of \cite{lemm} and Remark \ref{rnat}(2) below.

2. By Remark 3.3.6 of \cite{degcis},  we have the Nisnevich Mayer-Vietoris triangles for the motives $\mgs(-)$. 
In particular, we have $\mgs(X\sqcup Y)=\mgs(X)\bigoplus \mgs(Y)$  for any 
smooth $X,Y/S$.


\end{rema}

We will need a certain 
extension of the functor $\mgs$ 
mentioned.  Let us start with introducing some auxiliary notation. 

\begin{defi}\label{dsms}
1. $\sms$ is the category whose objects are smooth 
$S$-schemes and morphisms are formal linear combinations of their morphisms. 

Certainly, the direct sum operation for $\smx\subset K^b(\smx)$ comes from the disjoint union of smooth $X$-schemes, whereas $\emptyset$ yields a zero object of this category.

2. For any $g:X'\to X$  denote by $\sm_g:K^b(\smx)\to K^b(\smxp)$
the functor induced by $-\times_X X'$.

3. If $f:X\to Y$ is smooth we denote by $\sm^g: K^b(\smx)\to K^b(\smy)$ the natural functor of "restricting the base".

4. Below for $f:X\to Y$ being a morphism of smooth 
$S$-schemes we will denote by $\mgs(Y)$ (resp.  $\mgs(X\stackrel{f}{\to} Y)$; we will sometimes omit $f$ in this notation) the result of applying the functor $\mgs:K^b(\sms)\to \dmcs$ (see Proposition \ref{pmg}(I.\ref{ikbsmo}) below) to the $\sms$-complex $\dots \to 0\to Y\to 0\dots $ (resp.   $\dots \to 0\to X\stackrel{f}{\to} Y\to 0\dots $); here we put $Y$ in degree $0$.

5. For $T$ being a reduced 
closed subscheme of some smooth $U/S$
 we denote $\mgs(V\to U)$ by $\mgs^U(T)$, 
where $V=U\setminus T$.

\end{defi}


Now introduce our version of $\mgs$ and prove its main properties.

\begin{pr}\label{pmg}
I. Let $g:X'\to X$ be an arbitrary (separated) morphism of reasonable schemes.

Then the following statements are valid.

\begin{enumerate}

\item\label{ikbsmo} For any (reasonable) $X$ there is an exact functor $\mgx:K^b(\smx)\to \dmcx$. For a smooth 
 $w:W\to X$ we have $\mgx(W)=w_\sharp \q_W$. 

\item\label{ikbsmii} 
We have $g^*\circ \mgx=\mgxp\circ\sm_g$.

\item\label{ikbsme} If $f:X\to Y$ is smooth, 
 we have 
$f_\sharp \circ \mgx= \mgy\circ \sm^f$ (recall also that $f_\sharp=f_!$ if $f$ is an open embedding). Moreover, for any $C\in \obj K^b(\sm_Y)$ the morphism $M_f(\mg_Y(C))$ is the natural morphism $\mg_Y(\sm^f\circ \sm_f(C)\to C)$
 
\end{enumerate}

II Let $S,T,U,V$ be as in definition \ref{dsms}(5); let $h:S'\to S$ be a (separated) morphism; denote by $U'$ (resp. $T'$) the base change from $S$ to $S'$ of $U$ (resp. the reduced scheme associated with the corresponding base change of $  T$). 
Then the following statements are valid.

\begin{enumerate}

\item\label{iii} 
We have  $h^*\mgs^U(T)=\mg_{S'}^{U'}(T')$.

\item\label{iei} Assume that  $h$ is smooth. 
Then $h_\sharp\mg_{S'}^{U'}(T') = \mg_{S}^{U'}(T')$. 


\item\label{icho} For fixed $S,X,T$ all the objects of the type $\mgs^U(T)$ are related by canonical isomorphisms. In particular, if $T$ is open in $X$ then all of these
objects are isomorphic to $\mgs^T(T)$.

\item\label{inis} Assume that $T=T_1\sqcup T_2$. Then $\mgs^U(T)\cong \mgs^U(T_1)\bigoplus \mgs^U(T_2)$

\item\label{iop} Assume that $h$ is an open embedding,  $T\subset S'\times_S U$ (whence $T'= T$). Then the morphism $M_h(\mgs^U(T))$ (see Remark \ref{rmgbm}(\ref{ino})) is an isomorphism.

\item\label{icl} Assume that $h$ is a closed embedding and $T\subset S'\times_S U$ (again).  Then 
$M^h(\mgs^U(T))$ 
is an isomorphism.

\item\label{iso} 
If $T\subset S'\subset S$ then there is a canonical isomorphism 
$\mg_S^U(T)\to h_! \mg_{S'}^{U'}(T)$. 
 In particular, taking $S'=T$ we obtain $\mgbm(T)\cong \mg_S^U(T)$.

\end{enumerate}

\end{pr}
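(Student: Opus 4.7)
The plan is to first establish Part I by constructing $\mgx:K^b(\smx)\to\dmcx$ as the bounded-complex extension of a natural additive functor $\smx \to \dmcx$, and then to derive all the statements of Part II from Part I by combining localization/gluing arguments with Nisnevich excision.

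For Part I(\ref{ikbsmo}) I would define the functor on objects of $\smx$ by $W\mapsto w_\sharp\q_W$ and on a morphism $f:W_1\to W_2$ of smooth $X$-schemes (with structural morphisms $w_i:W_i\to X$) by sending $f$ to the composition $w_{1\sharp}\q_{W_1}=w_{2\sharp}f_\sharp\q_{W_1}=w_{2\sharp}f_\sharp f^*\q_{W_2}\to w_{2\sharp}\q_{W_2}$, where the last arrow applies $w_{2\sharp}$ to $M_f(\q_{W_2})$; additive extension and then iterated cones give the desired functor $K^b(\smx)\to\dmcx$. Part I(\ref{ikbsmii}) then reduces to the smooth base change isomorphism $g^*w_\sharp\cong w'_\sharp g'^*$ on objects and extends to $K^b$ by the universal property. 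Part I(\ref{ikbsme}) follows from the $2$-functoriality $(fw)_\sharp=f_\sharp w_\sharp$; the final claim that $M_f(\mg_Y(C))$ equals $\mg_Y$ applied to the natural morphism $\sm^f\sm_f(C)\to C$ is then obtained by Yoneda after identifying both sides on single generators $C=Y'\in\sm_Y$, where it amounts to the definition of the counit.

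Part II(\ref{iii})--(\ref{inis}) then proceed as follows. Items (\ref{iii}) and (\ref{iei}) are essentially tautological given Part I: applying $h^*$ or $h_\sharp$ to the two-term complex $[V\to U]$ produces the analogous complex on the base change or on the ambient, and $V\times_S S'=U'\setminus T'$ because the reduced scheme has the same underlying open complement as $T\times_S S'$. For (\ref{icho}), my strategy is to invoke the Nisnevich Mayer--Vietoris triangles recalled in Remark \ref{rsharp}: given two smooth ambient $U_1,U_2\supset T$, a common étale neighborhood yields, via Mayer--Vietoris, comparison isomorphisms $\mgs^{U_1}(T)\cong \mgs^{U_2}(T)$, with the cocycle condition checked at the same level. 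For (\ref{inis}), the open cover $\{U\setminus T_1,U\setminus T_2\}$ of $U$ (indeed a cover because $T_1\cap T_2=\emptyset$) gives a Mayer--Vietoris triangle, and taking cones against $\mg_S(U)$ exhibits $\mgs^U(T)$ as the direct sum $\mgs^U(T_1)\oplus\mgs^U(T_2)$.

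For Part II(\ref{iop})--(\ref{iso}), the strategy is to establish each isomorphism via the gluing triangles of Theorem~\ref{tcisdeg}(\ref{iglu}) combined with what has already been proved. In (\ref{iop}), (\ref{iii}) gives $h^*\mgs^U(T)=\mg_{S'}^{U'}(T)$, and applying $h_\sharp=h_!$ while using (\ref{iei}) and (\ref{icho}) identifies $h_!h^*\mgs^U(T)$ with $\mgs^U(T)$; a diagram chase then shows that this identification is the counit itself. In (\ref{icl}), let $j$ be the complementary open embedding to $h$; by (\ref{iii}), $j^*\mgs^U(T)=\mg_{S\setminus S'}^{U\times_S(S\setminus S')}(\emptyset)=0$, and the gluing triangle $j_!j^*\to\id\to h_*h^*$ forces $M^h(\mgs^U(T))$ to be an isomorphism. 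For (\ref{iso}), I would start from the special case $S'=T$: applying $u_\sharp$ to the gluing triangle $j_!j^*\q_U\to\q_U\to i_*\q_T$ in $\dm(U)$ (with $j:V\to U$, $i:T\to U$) identifies $\mgs^U(T)$ with $u_\sharp i_*\q_T$, which upon passage between $u_\sharp$ and $u_!$ and use of $u_!i_!=(ui)_!=t_!$ yields the comparison with $\mgbm(T)$; the general case follows by factoring $h\circ t'=t$ and combining with the analogous identification $\mg_{S'}^{U'}(T)\cong t'_!\q_T$ over $S'$.

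The main obstacle I anticipate is (\ref{icho}), the existence of genuinely \emph{canonical} (and not merely non-canonical) isomorphisms between the models $\mgs^U(T)$: naturality in $U$ requires passing through common Nisnevich refinements and verifying a cocycle compatibility, and this naturality is essential for the comparison morphisms appearing in (\ref{iop}) and (\ref{iso}). A secondary piece of careful bookkeeping is the Tate shift absorbed in the passage between $-_\sharp$ and $-_!$ via $f_\sharp=f_!\langle s\rangle$, which must be tracked consistently throughout the identifications in (\ref{iso}).
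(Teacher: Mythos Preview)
Your treatment of Part I and of II(\ref{iii})--(\ref{icl}) is essentially the paper's argument: Part I is extracted from the Cisinski--D\'eglise description of $\dmc(-)$, (\ref{iii}) and (\ref{iei}) are immediate from I, (\ref{inis}) is Mayer--Vietoris, and (\ref{iop}), (\ref{icl}) go exactly through (\ref{iii})/(\ref{iei})/(\ref{icho}) and the gluing equivalence, respectively.

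There are two places where you diverge from the paper, and one of them matters.

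For (\ref{icho}) you propose comparing two arbitrary smooth ambients via a common \'etale neighbourhood. The paper does something narrower and easier: it only compares $\mgs^{U_0}(T)$ with $\mgs^U(T)$ for $U_0\subset U$ an \emph{open} neighbourhood of $T$, and this single Mayer--Vietoris step already gives the canonical isomorphisms needed in all applications. Your \'etale approach is not wrong, but it is more than is required and the cocycle verification you flag becomes unnecessary.

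For (\ref{iso}) the approaches are genuinely different, and yours runs into exactly the issue the paper warns about. The paper's proof never passes through $\mgbm(T)$: it factors $h:S'\to S$ as $S'\xrightarrow{h'} S''\xrightarrow{h''} S$ with $S''=\overline{S'}$, then applies (\ref{icl}) to the closed immersion $h''$ and (\ref{iop}) to the open immersion $h'$. This yields the canonical isomorphism $\mg_S^U(T)\cong h_!\mg_{S'}^{U'}(T)$ as a composite of a unit and an inverse counit, with no Tate twist ever appearing; this is precisely what makes the compatibility recorded in Remark \ref{rnat}(1) automatic. Your route --- apply $u_\sharp$ to the gluing triangle in $\dm(U)$, get $\mgs^U(T)\cong u_\sharp i_*\q_T$, then convert $u_\sharp$ to $u_!\langle s\rangle$ --- introduces the twist $\langle s\rangle$, and produces $\mgs^U(T)\cong \mgbm(T)\langle s\rangle$ rather than $\mgbm(T)$. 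Remark \ref{rnat}(2) discusses essentially this identification and says it is ``not canonical, and so not sufficient for our applications''; even if your version (via a single triangle rather than a comparison of two) is more canonical than the one in that remark, you would still have to cancel the twist against the analogous twist over $S'$ and then verify that the composite agrees with the change-of-$U$ isomorphisms of (\ref{icho}). That bookkeeping is exactly what the paper's closed-then-open factorisation sidesteps.
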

\begin{proof}

I 
Immediate from one of the descriptions of $\dmc(-)$; see 
Theorem 16.1.2 of \cite{degcis}.

II Assertions \ref{iii},\ref{iei} are just particular cases of assertions I.\ref{ikbsmii} and I.\ref{ikbsme}, respectively. 

\ref{icho}. It suffices to note that for $U_0$ being an open neighbourhood of $T$ in $U$ the natural morphism  $\mgs^{U_0}(T)\to  \mgs^U(T)$ is an isomorphism. The latter is an immediate consequence of Remark \ref{rsharp}(2). Loc. cit. also easily implies assertion \ref{inis}.

\ref{iop}. By assertions (\ref{iii},\ref{iei}), we have $h_!h^*\mgs^U(T)=\mgs^{U'}(T)$. Hence the statement is an immediate consequence of assertion \ref{icho}.

\ref{icl}. 
Denote the open embedding complementary to $h$ by $j$. Assertion \ref{iii} yields that $j^*\mgs^U(T)=0$.
Hence it suffices to note that $h^*$ yields an equivalence of categories when restricted to the "kernel" of $j^*$ (see Definition \ref{dglu}(1) and Theorem \ref{tcisdeg}(\ref{iglu})). 

\ref{iso}. Denote the closure of $S'$ in $S$ by $S''$; $h'':S''\to S$ is the corresponding closed embedding; let $h'$ be the (open) embedding of $S'$ into $S''$.

Assertion 
 \ref{iii} yields that $h''^*\mgs^U(T)=\mg_{S''}^{U\times_S S''}(T)$. 
Next, by assertion \ref{iei}  we have $h'_! \mg_{S'}^{U'}(T)=\mg_{S''}^{U'}(T)$. Hence it remains to note that (according to  the previous
assertions) the morphisms $M^{h''}:\mg_{S'}^{U'}(T)\to h''_*\mg_{S''}^{U\cap S''}(T)=h''_!\mg_{S''}^{U\cap S''}(T)$ and $h''_!(M_{h'}(\mg_{S''}^{U\cap S''}(T)):h''_!\mg_{S''}^{U\cap S''}(T)\to h_! \mg_{S'}^{U'}(T)$ are isomorphisms.


\end{proof}

\begin{rema}\label{rnat}
1. Since the isomorphism constructed in the proof of assertion II.\ref{iso} is obtained by composing a natural transformation with its inverse, it is compatible with the "change of $U$" isomorphisms mentioned in assertion II.\ref{icho}. This means (in the case where $U_0\subset U$, where $U_0$ is an open neighbourhood of $T$ in $U$) that the following square 
$$\begin{CD}
\mg_S^{U_0}(T)@>{}>>\mg_S^{U}(T)\\
@VV{}V@VV{}V \\
h_!\mg_{S'}^{U_0\cap S'}(T) @>{}>>h_!\mg_{S'}^{U'}(T)
\end{CD}$$
is commutative. 
Here we also apply assertion 
I.\ref{ikbsmii} 
in order to 
identify the bottom arrow in this square.

2. One can 'almost generalize' assertion II.\ref{iso} in the following way. Assume that $U$ is everywhere of dimension $s$ over $S$. Then comparing the distinguished triangle $\mgbm(V)\to \mgbm(U)\to \mgbm(T)$  (see (\ref{emgys})) with the 'obvious' triangle $\mgs(V)\to \mgs(U)\to \mgs^U(T)$ one can easily prove that $\mgbm(T)\cong  \mgs^U(T)\lan -s\ra$. Yet this isomorphism is not canonical, and so not sufficient for our applications (to Proposition \ref{pcss} below). Still it certainly yields that our definition of $\mgbm(-)$ is closely related to the one from \S I.IV.2.4 of \cite{lemm}. The main disadvantage of defining $\mgbm(T) $ as $ \mgs^U(T)\lan -s\ra$ is that it requires the existence of an embedding of $T$ into a smooth $S$-schemes; yet this would cause no problems in the current paper since we could have restricted ourselves to quasi-projective $S$-schemes (recall here that we have defined $\chow(S)$ above using  projective $S$-schemes only). Moreover, this alternative definition could have nice "integral coefficient properties" and is easier to "lift to a model of $\dms$" (see Remark \ref{rcymod}(6) and \S\ref{sgws}
 below, respectively).

3. 
Actually, above we have demanded $T$ to be reduced just for simplicity of formulations. In particular, 
 assertion II.\ref{iso} is also valid in the case where $T$ is not reduced (certainly here we also 
set  $\mgs^U(T)=\mgs(V\to U)$; cf. Theorem \ref{tcisdeg}(\ref{ifuh})). %

\end{rema}

  \subsection{$\delta$-coniveau spectral sequences for the cohomology of Borel-Moore motives}\label{scss}
  
 
 Now let us study certain ($\delta$)-coniveau spectral sequences for the cohomology of the Borel-Moore motives of finite type $S$-schemes. These properties are quite natural (though somewhat technical).  
For the convenience of readers, we also note now  that in the proof of Theorem \ref{thts}(I) below only part I.1 of  Proposition \ref{pcss} is applied;
cf. also Remark \ref{rcouples}(3) below.

  We introduce some notation.
  
\begin{defi}\label{dcoh} 
\begin{enumerate} 

\item\label{i2} Let $\au$ be an abelian category; let $H:\dmcs\to \au$ be a cohomological functor 
(see \S\ref{snotata}). Then for a 
finite type $X/S$ and $j\in \z$ we define $H^j(X)$ as $H(\mgbm(X)[-j])$. Besides, for an $H\in \obj \dms$ we will denote $\dms(\mgbm(X)[-j], H)$ by $H^j(X)$ (also).

Moreover, we will need certain twists of cohomology theories: for an $l\in\z$ we  denote $X\mapsto H(X\{l\})$ by $H_{-l}$
(following \cite{3}).

\item\label{i3} Assume in addition that $\au$ satisfies AB5 (see \S\ref{snotata}). 
Let $X_0$  be a
scheme essentially of finite type over $S$
 (in particular, the spectrum of 
a 
field over $S$; see \S\ref{snotata}); present $X_0$ as an inverse limit of 
schemes $X_i$ of finite type over $S$ (with the connecting morphisms $j_{ii'}$ being open affine dense embeddings).
Then for any cohomology theory $H$ (including the theories $H^j_l$ as introduced above) we set
  $H(X_0)=\inli H(X_i) (=\inli H(\mgbm(X_i)))$. Here the transition morphisms between $\mgbm(X_i)$ are induced by the transformations 
 $M_{j_{ii'}}$; see Remark \ref{rmgbm}(\ref{ino}). It is easily seen that this direct limit is uniquely determined by  $X_0$ considered as an $S$-scheme; cf. \S3 of \cite{degenmot}.

\item\label{i4} For an $H$ as above, a finite type morphism $a:S'\to S$, and 
any separated $b:S\to S''$ we define $a^!H:\dmcsp\to \au$ (resp. $b_*H:\dmcspp\to \au$)
as $H\circ a_!$ (resp. $H\circ b^*$). Certainly, if $H$ is represented by an  $M\in \obj\dms$ (and $\au=\qv$), then
$a^!H$ (resp. $b_*H$) is represented by $a^!M$ (resp. by $b_*M$); this is the reason for choosing this notation. 

\item\label{i5} Below we will need certain flags of subschemes for a (reasonable) scheme $X$ (of finite type over $S$).

So, we consider a projective system $L=L(X)=\{\lambda\}$
whose elements are sequences of closed  reduced 
subschemes:
a $\lambda\in L$ is a sequence $(X_i^\lambda,-\infty \le 
 i<\infty)$ 
such that   $X^\lambda_i$ is a closed subscheme of  $X^\lambda_{i-1}$ for all $i\in \z$, $\codimz_X X^\lambda_i\ge i$ for all $i\ge 0$ and $X_i=X$ for all $i\le 0$ 
(note that all $X^\lambda_i$ are empty starting from $i=\dimz(X)+1$; yet "formally infinite" sequences of subschemes are more convenient for the arguments below). 
 The  partial ordering of $L$ is given by 
 embeddings 
$X_i^\lambda\to X_i^{\lambda'}$.

We set $U_i^\lambda=X\setminus X_i^\lambda$ (for all $i\in \z$).

\end{enumerate}
\end{defi}

  Now we are able to adjust the standard construction of coniveau spectral sequences to our setting. The following spectral sequence would have looked somewhat more 'familiar' if we would have added the twist $\lan \dimz(X) \ra$ to the definition of $\mgbm(X)$; yet such a modification would have caused certain  notational problems in the case where $\delta$-dimensions of the connected components of  $X$  are distinct. 

\begin{pr}\label{pcss}
I Adopt the conventions (and notation) of 
Definition \ref{dcoh};
 assume that $X/S$ is a  
finite type scheme; let $g:X'\to X$ be a morphism of reasonable schemes. Then the following statements are valid.

1. 
There exists a  spectral sequence with 
\begin{equation}\label{ee1}
E_1^{pq}=\coprod_{x\in \xx^p} H^{q}(x)\end{equation}
 (where $\xx^p$ is the subset of the set $\xx$ of points of $X$ consisting of points of $\delta$-codimension $p$) that converges to $E_\infty^{p+q}=H^{p+q}(X)$ (in the notation of part 2 of the Remark); we will call  spectral sequences of this sort  {\it coniveau} ones.

The induced filtration $F^jH^*$ of $H^*$ is the $\delta$-coniveau one i.e. it is given by $\cup \ke H^*(X)\to H^*(U_{j}^\lambda)$ for 
$U_{j}^\lambda$ (for a fixed $j\ge 0$) running through all open subschemes such that $\codimz_X(X\setminus U_{j}^\lambda)\ge j$. 

Moreover, this spectral sequence (together with the corresponding exact couple) coincides with the $\delta$-coniveau one for the cohomology of $X$ "over $X$" with coefficients in $x^!H$.

2. 
There also exists a 
spectral sequence with 
$E_1^{pq}=\coprod_{x\in \xx^p} H^{q}(x\times_X X')$ 
 that converges to $H^{p+q}(X')$.

3. Assume that $g$ is smooth of  
 relative dimension $s$, whereas $\dimz(X')=\dimz(X)+s$. 
 Then  $H^*(x_!M_g(\mgbm(X)))$ (see Remark \ref{rmgbm}(\ref{ino})  is compatible with a certain natural morphism between $\delta$-coniveau spectral sequences converging to $H^{*-s}_{-s}(X')$ and $H^*(X)$, respectively. Moreover, 
this construction is well-behaved with respect to compositions of smooth morphisms. 
Finally, the corresponding morphisms between $E_1$-terms of coniveau spectral sequences are naturally compatible with the morphisms between points of $X'$ and $X$ (of equal $\delta$-codimensions) induced by $g$.

4. 
Assume that $g$ is finite; denote $\dimz( X)-\dimz( X')$ by $c$. 

Then there exists a natural morphism from the $\delta$-coniveau spectral sequence converging to $H^*(X')$ to the shift of the one for $H^*(X)$
by $c$ (i.e. $E_r^{pq}(X')$ maps to $E_r^{p+c,q}(X)$)  
that is compatible with $H^*(x_!M^g(\mgbm(X)))$. Moreover, 
this construction respects compositions  of finite morphisms.
Finally, the induced morphisms of $E_1$-terms are naturally compatible with the morphisms between points of $X'$ and $X$ (of  $\delta$-codimensions that differ by $c$) induced by $g$.


II 
 For any finite type $y:Y\to S'$ the $\delta$-coniveau spectral sequence for $a^!H(Y)$ (for $Y$ considered as an $ S'$-scheme) is naturally isomorphic to the one for $H(Y)$
(for $Y/S$).


III Assume that $X=X''\times_S S''$ for some finite type $x'':X''\to S''$. Then the following statements are valid.

1. For the $\delta$-coniveau spectral sequence for $b_*H^*(X'')$ we have 
$E_1^{pq}=\coprod_{x\in \xxpp^p} H^{q}(x\times_X X')$, whereas 
 the limit is isomorphic to $H^{q}(X)$.

2. Assume that $b$ is of relative dimension $\le s$ (for some $s\ge 0$),   
and $\dimz( X)=\dimz( X'')+s$. Then  we have a natural morphism from the $\delta$-coniveau spectral sequence  
for $b_*H^*(X'')$ 
to the one for $H^*(X)$.

Moreover,  this morphism is an isomorphism if $s=0$. 

3. More generally, for a proper $b''$ and an $X''$ as above assume that $S$ is of dimension $\le s$ over a subscheme  of $S''$ 
of $\delta$-codimension $c$ (for some $c\ge 0$), whereas $\dimz( X)=\dimz( X'')-c+s$. Then we have a morphism  from the $\delta$-coniveau spectral sequence  
for $b_*H^*(X'')$ 
to the shift of the one for $H^*(X)$
by $c$ (cf. assertion I.4). This morphism is an isomorphism if $s=0$.

\end{pr}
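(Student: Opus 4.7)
The plan is to construct each spectral sequence from an exact couple obtained by applying $H$ to a system of distinguished triangles indexed by a single flag $\lambda\in L(X)$, and then to identify the $E_1$-terms by passing to the filtered colimit over $L(X)$.

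For Part I.1, I fix $\lambda\in L(X)$ and apply Theorem \ref{tmgeff}(I.\ref{igys}) to the closed embeddings $X_{i+1}^\lambda\hookrightarrow U_i^\lambda$ (inside $X$). This yields a compatible system of distinguished triangles $\mgbm(U_i^\lambda)\to\mgbm(U_{i+1}^\lambda)\to\mgbm(X_i^\lambda\setminus X_{i+1}^\lambda)$; applying the cohomological functor $H$ produces an exact couple whose associated spectral sequence converges to $H^*(X)$ (convergence is immediate since $U_i^\lambda=X$ for $i>\dimz(X)$). Taking the filtered colimit over $\lambda\in L(X)$ and invoking Theorem \ref{tcisdeg}(\ref{icont}) together with Definition \ref{dcoh}(\ref{i3}) identifies the $E_1^{pq}$-term with $\coprod_{x\in \xx^p}H^q(x)$; the induced filtration on the abutment is visibly the $\delta$-coniveau one. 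The coincidence with the coniveau spectral sequence for $x^!H$ on $X$ follows at once from the $x_!\dashv x^!$ adjunction, since $H^q(X)=H(x_!\q_X[-q])=x^!H(\q_X[-q])$ and the underlying flag system $L(X)$ is the same.

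For Part I.2, I repeat the construction with the pulled-back flags $\{X_i^\lambda\times_X X'\}$ of $X'$, using Theorem \ref{tmgeff}(I.\ref{igys}) for the distinguished triangles in $X'$; the identification of the $E_1$-term after the colimit uses Theorem \ref{tcisdeg}(\ref{icont},\ref{ridmot}) applied to the essentially affine systems describing $x\times_X X'$. For the functoriality statements I.3, I.4 (and the base-change variants in III), the strategy is to lift the exact couples to the level of $K^b(\sms)$ via the smooth-model construction of Section \ref{sbmp}, where pullback is strictly functorial. For smooth $g$ (Part I.3) one applies $g^*$ to these models and uses $M_g$ via Proposition \ref{pmg}(I.\ref{ikbsme}) and (II.\ref{iei}), while the hypothesis $\dimz(X')=\dimz(X)+s$ ensures that points of matching $\delta$-codimension correspond. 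For finite $g$ (Part I.4) and proper base change (Parts III.2--3), one uses $M^g$ via Proposition \ref{pmg}(II.\ref{icl}) and accounts for the codimension shift $c$ in the indexing.

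For Part II the identification is essentially formal: $a^!H$ applied to $\mathcal{M}^{BM}_{S'}(Y)$ equals $H(a_!\mathcal{M}^{BM}_{S'}(Y))=H(\mgbm(Y))$ because $a_!\circ y_!=(a\circ y)_!$, and the flag system $L(Y)$ depends only on $Y$ as a scheme, so the entire exact couple is identical. Part III.1 proceeds analogously using the base-change identity $b^*\mathcal{M}^{BM}_{S''}(X'')\cong\mgbm(X)$ from Theorem \ref{tcisdeg}(\ref{iexch}). Parts III.2 and III.3 combine these ingredients: the flags of $X''$ pull back to a filtration of $X$, and the induced morphism of $E_1$-terms is an isomorphism when $s=0$ because the map on generic points of strata is then a bijection. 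The main obstacle, which motivates the whole of Section \ref{sbmp}, is that $\mgbm$ is not strictly functorial in the scheme variable; consequently the comparison morphisms between spectral sequences in I.3, I.4, III are only canonical once one passes to the $\mgs^U(T)$-type models of Proposition \ref{pmg} and verifies the compatibility squares of Remark \ref{rnat}(1). Once this bookkeeping is systematically in place, each assertion reduces to a routine application of the adjunctions and base-change identities of Theorem \ref{tcisdeg}.
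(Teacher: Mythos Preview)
Your approach is essentially the one the paper takes, with one organizational difference worth flagging. You build the exact couple for a fixed $\lambda$ directly from the Gysin triangles of Theorem~\ref{tmgeff}(I.\ref{igys}) and only invoke the smooth-model machinery of \S\ref{sbmp} afterwards, for the comparison maps in I.3, I.4, III. The paper instead works with the functor $\mgx:K^b(\smx)\to\dmcx$ from the very first step: the Postnikov tower for $\lambda$ is built from the objects $x_!\mgx(U_i^\lambda)$ and their cones $x_!\mgx(U_i^\lambda\to U_{i+1}^\lambda)$, so that all transition morphisms (both within a single exact couple and between couples for different $\lambda$) arise by applying an honest functor to morphisms in $K^b(\smx)$. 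This is precisely the coherence issue you identify at the end as ``bookkeeping''; the paper simply chooses to handle it up front rather than retroactively. The paper itself remarks (in the paragraph preceding Definition~\ref{dsms}) that your route via Theorem~\ref{tmgeff}(I.\ref{igys}) is a legitimate substitute, at the cost of making the comparison morphisms between $\delta$-coniveau spectral sequences non-canonical; and it notes that this weakened form already suffices for Theorem~\ref{thts}(I).

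Two small points. First, a slip of the pen: the closed embedding you want in I.1 is $X_i^\lambda\setminus X_{i+1}^\lambda\hookrightarrow U_{i+1}^\lambda$, not ``$X_{i+1}^\lambda\hookrightarrow U_i^\lambda$'' (the latter two schemes are disjoint); the triangle you then write down is the correct one. Second, the $E_1$-identification does not really use Theorem~\ref{tcisdeg}(\ref{icont}) (which is a statement about the $2$-colimit of categories); what is actually invoked, both in your sketch and in the paper, is the standard dévissage that after passing to the limit over $L$ each stratum contributes one summand per point, together with Proposition~\ref{pmg}(II.\ref{inis},\ref{iop},\ref{iso}) to rewrite those summands as $H^q(x)$ in the sense of Definition~\ref{dcoh}(\ref{i3}).
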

\begin{proof}
I We can assume that $X$ is reduced (see Theorem \ref{tcisdeg}(\ref{ifuh})). 

Now let us apply a more or less standard method for constructing coniveau spectral sequences (see \S3 of \cite{ndegl}; in \S1 of
 \cite{suger} a closely related argument is described; cf. also \S2 of \cite{degfib}). 
 Let $d$ denote  $\dimz(X)$. 

For any $\lambda\in L=L(X)$ (see  Definition \ref{dcoh}(\ref{i5})) we take the sequence of morphisms 
$\dots \to x_!\mgx(U_{-1}^{\lambda}) \to (0=)x_!\mgx(U_0^{\lambda})\to x_!\mgx(U_1^\lambda)\to x_!\mgx(U_2^\lambda)\to\dots\to x_!\mgx(U_d^\lambda)\to x_!\mgx(U_{d+1}^\lambda)=x_!\mgx(X)$ (and we also have $x_!\mgx(U_{i}^\lambda)=x_!\mgx(X)$ for all $i>d+1$). Completing all of these morphisms to distinguished triangles 
 $x_!\mgx(U_{i-1})\to x_!\mgx(U_i)\to x_!\mgx(U_{i-1}\to U_i)$
 yields a so-called {\it 
 Postnikov tower} in $\dmcs$ that we will denote by $P(\lambda)$. 

Applying $H^*$ to this tower 
yields an exact couple with $D_1^{pq}=H^{q-1}(x_!\mgx(U_p^\lambda))=x^!H^{q-1}(\mgx(U_p^\lambda))$, $E_1^{pq}=x^!H^{q}(\mgx(U_p^\lambda\to U_{p+1}^\lambda))$
(see \S3 of \cite{ndegl} or Exercise 2 in \S IV.2
 of \cite{gelman}). We  denote this couple by $EC(X,x^!H,\lambda)$ 
 Note that the spectral sequence corresponding to this couple converges to $E_\infty^{p+q}= D_1^{0,p+q+1}=H^{p+q}(x_!\mgx(X))=H^{p+q}(X)$, and the induced filtration is given by $\ke (H^{p+q}(X)\to H^{p+q}(U_p^\lambda))$.

Next, we take  exact couples as above for all $\lambda$ and pass to the direct limit 
over $L$. We can do so since
our system of couples is "coherent".  
Indeed, all the connecting morphisms in these couples and the transition morphisms between them come from applying $x_!H\circ \mgx$ to $K^b(\smx)$-morphisms (see Proposition \ref{pmg}(I.\ref{ikbsmo}-\ref{ikbsme})).

Our spectral sequence is the one corresponding to this limit couple $EC(X,x^!H,L)$. 
Certainly, this limit spectral sequence also converges to 
$H^{p+q}(X)$, whereas  the induced filtration is the $\delta$-coniveau one. 
Besides, the "moreover" part of the statement is certainly given by construction.

It remains to calculate the $E_1$-terms of this spectral sequence. Standard arguments (cf. \S1.2 and Remark 5.1.3(3) of \cite{suger}; see also Remark \ref{rcouples}(2) below) yield that $E_1^{pq}$ splits as 
$\coprod_{x\in X^p}\inli x^!H^{q}\mgx^{U_x^\al}(T_x^\al)$ where the direct limit is taken for all open neighbourhoods $U_x^\al$ of $x$ in $X$ and $(T_x^\al=\cl_{U_x^\al}x$;
here we apply Proposition \ref{pmg}(II.\ref{iop},
\ref{inis}).
Hence 
Proposition \ref{pmg}(II.\ref{iso}, I.\ref{ikbsme}) yields the 
 formula desired for $\inli_L E_1^{pq}(\lambda)$. 


2. The method of the proof of the previous assertion carries over to this (relative) setting without any difficulty (cf. also Theorem 1.2.1 of \cite{degfib}). 
For $L$ as above denote by $L^{X'}_X$ the "base change of $L$ to $X'$ (i.e. its elements are $\lambda^{X'}_X=(X_i^\lambda\times_X X')$).
Then one considers the exact couples of the type $EC(X',g^!x^!H, \lambda^{X'}_X)$ (i.e. $D_1^{pq}=H^{q-1}(x_!g_!\mgxp(U_p^\lambda\times_X X'))$, $E_1^{pq}=H^{q}(g_!x_!\mgxp(U_p^\lambda\times_X X'\to U_{p+1}^\lambda\times_X X'))$ and then passes to the direct limit with respect to $L^{X'}_X$.
The method of the calculation of $E_1$-terms is also similar to the one above. 

3. 
Proposition \ref{pmg}(I.\ref{ikbsmo},\ref{ikbsme}) easily yields that 
one should map the $\delta$-coniveau spectral sequence converging to $x!H^*\circ g_\sharp (X'))$ ("computed at $X'$")
 into the one for $x!H^*(X)$.
It certainly suffices to construct a natural morphism between the corresponding exact couples.
Moreover, the morphism of spectral sequences obtained will be compatible with $H^*(x_!M_g(\mgbm(X)))$ if we factor the morphism of couples mentioned through the 
 couple for $x^!H^*\circ g_\sharp g^*(X)$. So, for a $\lambda$ as above (see Definition \ref{dcoh}(\ref{i5}))) and for  $L'=\{\lambda'\}$ corresponding to $X'$ one should 
compare  $EC(X,x^!H\circ g_\sharp g^*,\lambda)$ (i.e. $(x_!H^{*-1}(g_\sharp g^*\mgx(U_p^\lambda)),x_!H^*(g_\sharp g^*\mgx(U_p^\lambda\to U_{p+1}^\lambda)))$) with
$EC(X',x^!H\circ g_\sharp,L')= \inli_{\lambda'\in L'} EC(X',x^!H\circ g_\sharp,\lambda')$. 
Now, Proposition \ref{pmg}(I.\ref{ikbsmo}) yields that $g^*\mgx(U_p^\lambda))$ (resp. $g^*\mgx(U_p^\lambda\to U_{p+1}^\lambda))$) equals $\mgxp'(U'{}_p^{\lambda'}))$ (resp. $\mgxp'(U'{}_p^{\lambda'}\to U'{}_{p+1}^{\lambda'}))$. Hence $EC(X,x^!H\circ g_\sharp g^*,\lambda)=EC(X',x^!H\circ g_\sharp,\lambda^{X'}_X)$ (in the notation introduced above). 
Since we have  $L^{X'}_X\subset L'$,  the natural morphism $\inli_{\lambda'\in L^{X'}_X} EC(X',x^!H\circ g_\sharp,\lambda')\to \inli_{\lambda'\in L'} EC(X',x^!H\circ g_\sharp,\lambda')$ yields the morphism of spectral sequences desired.  

It remains to note that the corresponding morphisms of $E_1$-terms are induced by morphisms to points of $X$ of the generic points of their base change to $X'$ (see Proposition \ref{pmg}(II.\ref{iei})).


4. We argue somewhat similarly to the previous proof. 
We consider the following map from $L$ to $L'$ (as considered above): we send $\lambda$ to $\lambda_X^{X'}[c]=(X'_{\lambda_X^{X'}[c],i})$
where $X'_{\lambda_X^{X'}[c],i}=X'$ if $i\le c$ and $=X^\lambda_{i-c}\times_X X'$ otherwise.
Obviously, the image of this map is unbounded 
 in $L'$ (and so, can be used for the calculation of direct limits with respect to $L'$). We also note that 
 for all "large enough" $\lambda\in L$ we have $\imm X'\subset X_i^\lambda$ for all $i\le c$; hence for these $\lambda$ the spectral sequence for  $\lambda_X^{X'}[c]$ differs from the one for  $\lambda_X^{X'}$ exactly by the "shift by $c$" as described in the assertion.
Hence in order to prove the assertion it remains to make the corresponding cohomology calculations.

We have $EC(X,x^!H\circ g_* g^*,\lambda)=EC(X',x^!H\circ g_!,\lambda^{X'}_X)=EC(X',(x\circ g)^!H, \lambda^{X'}_X$; 
this yields the result in question.

Lastly, 
 the corresponding morphisms of $E_1$-terms are induced by $M^g$ applied to $\mgx(V_x^\al\to U_x^\al)$ 
for (the "formal inverse limit of") all open neighbourhoods $U_x^\al$ of a given point $x$ in $X$, and $V_x^\al=U_x^\al\setminus \cl_{U_x^\al}x$.

II Immediate from the "moreover" part of assertion I.1.

III.1.  By construction, the $\delta$-coniveau spectral sequence in question comes from the exact couple  $
EC(X'',x''^!b_*H, L'')$ (where  $L''=L(X)=\{\lambda''\}$ corresponds to $\delta$-coniveau spectral sequences for $X''$; cf. Definition \ref{dcoh}(\ref{i5})).

Let $b_X$ denote the base change of $b$ via $x''$. Then we have an invertible natural transformation   $b^*x''_!\to x_!b_X^*$ (see Theorem \ref{tcisdeg}(\ref{iexch}). This immediately yields the desired calculation for the 
$E_1$-terms of our spectral sequence. Moreover,
 we have  $EC(X'',x''^!b_*H, \lambda'')\cong  EC(X'',b_{X*}x^!H, \lambda'')$, and isomorphisms of this type are compatible with the transition morphisms between the couples of the former type for $\lambda''$ running through $L''$. Hence our spectral sequence is (canonically) isomorphic to that coming from  $\inli_{\lambda''\in L''} EC(X'',b_{X*}x^!H, \lambda'')=\inli _{\lambda''^X_{X''}\in L''^X_{X''}} EC(X,x^!H, \lambda''^X_{X''})$. Hence the formula desired for the $E_1$-terms of the corresponding spectral sequence is an easy consequence of Proposition \ref{pmg}(I.\ref{iii}); cf. the proof of assertion I.1 (and of I.2). 

2.  Our $\delta$-dimension 
 assumptions yield that $L''^{X}_{X''}\subset L$. Hence the 
calculations made above yield the following: the morphism in question is induced by the  obvious morphism $\inli_{L''^{X}_{X''}}EC(X,x^!H,\lambda''^X_{X''})\to \inli_{L''}EC(X,x^!H, \lambda''^X_{X''})$. Lastly, if $b''$ is quasi-finite (and $\dimz( X'')=\dimz( X)$) then  this morphism  is an isomorphism since any element of $L$ is dominated by a one of $L''^{X}_{X''}$ (cf. the proof of assertion I3).

3. It suffices to replace $\lambda''^X_{X''}$ by $\lambda''^X_{X''}[c]$ (see the proof of assertion I.4) in the previous argument. 

\end{proof}

\begin{rema}\label{rcouples}

1. In particular, if $g$ is an open embedding (and $\dimz( X')=\dimz( X)$) then the induced morphisms of $E_1$-terms are the natural split epimorphisms (corresponding to the decompositions of the form (\ref{ee1})). If
$g$ is a closed embedding 
then the induced morphisms of $E_1$-terms are the natural 
 split monomorphisms (see Proposition \ref{pmg}(\ref{icl})).

Certainly, here we compare the decompositions of the type (\ref{ee1}) using the constructions of Proposition \ref{pmg} (see also Remark \ref{rnat}(1)).

2. In \S3 of \cite{ndegl} two closely related methods for constructing coniveau spectral sequences were described; one may say that they correspond to "complementary" Postnikov towers. Above we have considered the exact couple corresponding to the method (very much) similar to the one used in \cite{bger} and \cite{bgern}, whereas in \S1 of \cite{suger} the "complementary" exact couple was considered. Yet the couples mentioned yield canonically isomorphic spectral sequences (see \S3 of \cite{ndegl}); hence the method of calculation of the $E_1$-terms of the spectral sequence in \S1.2 of \cite{bger}  
can be applied to 'our' exact couple also. Besides, it is actually not difficult at all to use the "complementary" exact couples in all the arguments above.

3. It is not very easy to grasp Proposition \ref{pcss} (and especially its proof). Unfortunately, the author does not now how to simplify it. Yet below we will rarely need the 'whole' $\delta$-coniveau spectral sequences; most of the time we will only be interested in their $E_1$-terms and boundary morphisms between them. 

So, for a $\lambda$ as in 
 Definition \ref{dcoh}(\ref{i5}) we consider the complexes $C_t(X,x^!H,\lambda)$ (for $t$ running over all integers) whose terms are $C_t^m(\lambda)=x^!H^{t}(\mgx(U_{-m}^\lambda\to U_{1-m}^\lambda)[m])$ and the boundary morphisms are induced by the obvious $K^b(\smx)$-ones. Then we define the Cousin complex $C_t(X,x^!H,L)$ as the direct limit of  $C_t(X,x^!H,\lambda)$ for $\lambda$ running over $L$. 

By construction, the collection of these complexes yields the $E_1$-level of the spectral sequence described in Proposition \ref{pcss} (so, $C^m_t(X,x^!H,L)=E_1^{-m,t-m} $). Thus one can reformulate those assertions of Proposition \ref{pcss} that mention $E_1$, in terms of $C_t(-)$, whereas the shifts of  the spectral sequences mentioned just yield shifts of Cousin complexes. We also note that replacing the proposition with the corresponding corollary (i.e by its '$C_t(-)$-modification') would not have simplified the proof substantially; yet understanding the proof would have probably become easier.

4. One can also prove a certain analogue of assertion I.4 when $g$ is a proper morphism of arbitrary relative dimension $s$.
To this end one should take $c=\dimz( X)-\dimz( \imm X')$ and compare the exact couples corresponding to $\lambda_X^{X'}[c]$ (for $\lambda\in L$) with 
that for $\lambda'[s]=(\dots, X', X',X'{}_{s+1}^{\lambda'},X'{}_{s+2}^{\lambda'},\dots)$. 
 Note that the spectral sequence corresponding to 
$\inli_{\lambda'\in L'}EC(X',x^!H\circ g_!,\lambda'[s])$ is closely related to the 'usual' coniveau spectral sequence for $(X, x^!H\circ g_!)$. 

5. The author suspects that some analogue of assertion I.3 can be established for an arbitrary flat morphism of regular schemes (using Theorem \ref{tcisdeg}(\ref{ipura})).

\end{rema}

Sometimes we will need $\delta$-coniveau spectral sequences for schemes that are (only) essentially of finite type over $S$.

\begin{coro}\label{cpsc} 
Let $X_0=\prli X_i$ be 
essentially of finite type over $S$ (see \S\ref{snotata}). Then the following statements are valid.

I There exists a spectral sequence with $E_1^{pq}=\coprod_{x\in \xx_0^p} H^{q}(x)$ that converges to $H^{q}(X_0)$ (see Definition \ref{dcoh}(\ref{i3})). 

II Assume that $\dimz(X_i)\le  d$ (for some $d\ge 0$).

1. Suppose 
for any $l\ge 0$ and $x\in \xx_0^l$ we have $H^{q}(x)=0$ for all $q> l-d$. Then $H^{j}(X_0)=0$
for any $j>0$. 

2. Suppose 
for any $l$ and $x$ (as above)  we have $H^{q}(x)=0$ for all $q< l-d$. Then $H^{j}(X_0)=0$
for any $j< -d$.  
\end{coro}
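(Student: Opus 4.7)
The strategy is to obtain both the spectral sequence and $H^*(X_0)$ as filtered colimits of the data already available for the finite type schemes $X_i$. For Part I, I would start with the $\delta$-coniveau spectral sequence $E_r^{pq}(X_i)$ of Proposition \ref{pcss}(I.1) for each $X_i$, then take the colimit over $i\in I$. Since the transition morphisms $X_j\to X_i$ for $j\ge i$ are open dense affine embeddings, Proposition \ref{pcss}(I.3) applied with $s=0$ (combined with the description in Remark \ref{rcouples}(1)) provides morphisms between the corresponding exact couples, acting on $E_1$-terms as the split surjection that discards the summands indexed by points of $X_i$ whose image lies outside $X_j$. Because $\au$ is AB5, the filtered colimit of these exact couples is again an exact couple, and so yields a spectral sequence; its abutment is $\inli_i H^*(X_i)=H^*(X_0)$ by Definition~\ref{dcoh}(\ref{i3}), and the $\delta$-coniveau filtration descends through the colimit.

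The main computational task is to identify the $E_1$-page of the limit with $\coprod_{x\in\xx_0^p}H^q(x)$. Open dense embeddings preserve $\delta$-codimension, so the $\xx_i^p$ form an inverse system of sets in which each point either survives through every later $X_j$ or is eventually removed, and the surviving compatible families correspond bijectively to the points of $X_0$ of $\delta$-codimension $p$. Since filtered colimits in an AB5 category commute with coproducts, the colimit of the $E_1$-pages equals $\coprod_{x\in\xx_0^p}\inli_i H^q(x_i)$; and by Definition~\ref{dcoh}(\ref{i3}) applied to the essentially finite type $x\in\xx_0$ (presented as $\prli x_i$), each inner colimit equals $H^q(x)$. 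The mild obstacle here is checking that these identifications are compatible with the transition morphisms of the exact couples in a way that matches the conventions of Proposition~\ref{pcss}(I.3) and the formulas of Proposition~\ref{pmg}; this is a direct but somewhat tedious functoriality verification.

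For Part II, since $E_\infty^{pq}$ is a subquotient of $E_1^{pq}$ and the spectral sequence converges, to conclude $H^j(X_0)=0$ it suffices to show that all $E_1^{pq}$ contributing to $H^j$ vanish. The assumption $\dimz(X_i)\le d$ forces $\dimz(X_0)\le d$ (since $\delta$-dimension is preserved under open dense embeddings), hence $\xx_0^p=\emptyset$ and $E_1^{pq}=0$ whenever $p>d$. Under the hypothesis of~(1), $H^q(x)=0$ for $q>l-d$ with $l=\codimz x$ translates into $E_1^{p,q}=0$ whenever $q>p-d$; combining this with the previous vanishing and tracking the index along the convergence yields $H^j(X_0)=0$ for $j>0$. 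Part~(2) is the symmetric statement: the hypothesis gives $E_1^{p,q}=0$ for $q<p-d$, so any potentially nonzero $E_1$-term satisfies $p+q\ge 2p-d\ge -d$ (using $p\ge 0$), forcing $H^j(X_0)=0$ for $j<-d$.
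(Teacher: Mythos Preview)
Your approach for Part~I is the same as the paper's: pass to the filtered colimit over $i$ of the $\delta$-coniveau exact couples for the $X_i$, using Proposition~\ref{pcss}(I.3) for the open dense embeddings $X_j\hookrightarrow X_i$ (smooth of relative dimension $0$) to supply the transition maps; the paper first reduces to connected $X_0$ but otherwise proceeds identically, and is simply terser than you on the identification of the limit $E_1$-page (it just remarks that the argument used for Proposition~\ref{pcss}(I.1) carries over).

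One caution on the numerics in Part~II. The corollary states convergence to $H^{q}(X_0)$, not $H^{p+q}(X_0)$; indeed, with the exact couple written in the proof of Proposition~\ref{pcss}(I.1) the differential $d_1$ has bidegree $(1,1)$, so the abutment is naturally graded by $q$ alone (the statement ``$E_\infty^{p+q}=H^{p+q}(X)$'' in that proposition should be read accordingly). Your explicit bound for II.2 uses the $p+q$-grading and happens to land on the correct answer, since $q\ge p-d\ge -d$ implies both $p+q\ge -d$ and $q\ge -d$. But if you attempt the same for II.1 you only obtain $p+q\le 2p-d\le d$, which is too weak to conclude $H^j(X_0)=0$ for $j>0$; this is presumably why you left II.1 vague. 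With the $q$-grading the two cases become symmetric and genuinely immediate: for II.1 one has $q\le p-d\le 0$ (using $\xx_0^p=\emptyset$ for $p>d$), hence $H^j(X_0)=0$ for $j>0$; for II.2 one has $q\ge p-d\ge -d$ (using $p\ge 0$), hence $H^j(X_0)=0$ for $j<-d$.
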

\begin{proof}
I Obviously, it suffices to construct the spectral sequences in question for the connected components of $X_0$; hence we can assume that all $X_i$ are connected.
By Proposition \ref{pcss}(I.3), in this case the direct limit of $\delta$-coniveau spectral sequences converging to $H^*(X_i)$ yields a spectral sequence converging to $H^*(X_0)$.
  It remains to note that 
   the 
  argument that was used in the calculation of $E_1^{**}$ in the proof of Proposition \ref{pcss}(I.1) also easily yields that the corresponding formula  is valid in this case also. 

II Immediate from assertion I.

\end{proof}

\begin{rema}

1. Actually, we could have modified our construction of $\delta$-coniveau spectral sequences in Proposition \ref{pcss}(I.1) so that it would work in this  'limit' setting also; cf. Corollary 3.6.2 of \cite{bger} and Proposition 3.2.4 of \cite{bgern}. 

2. If there exists an $i$ such that  $H^{r+l-d}(x)=0$ for all $r\neq 0$, $l\in \z$, $x\in \xx_i^l$,  then assertion II for this $H$ follows immediately from Proposition \ref{pcss}(I.1).
\end{rema}

We also formulate certain properties of  'stalks' of cohomology at 
fields over $S$.

\begin{coro}\label{cres}
Assume that an abelian category $\au$ satisfies AB5.

I Fix an $X_0$ essentially of finite type over $S$; $j,l\in \z$. 


1. The correspondence $H\mapsto H^j_l(X_0)$ converts coproducts of functors $\dmcs\to\au$ into $\au$-coproducts.

2. Let $F\to G\to H$ be a complex of functors ($\dmcs\to\au$) that is exact in the middle if applied to any object of $\dmcs$. Then 
the corresponding complex $F^j_l(X_0)\to G^j_l(X_0)\to H^j_l(X_0)$ is exact in the middle also.

II For a cohomological $H:\dmcs\to \au$ assume that $H_l^j(X_0)=\ns$ for any $l,j\in \z$ and any 
field $X_0/S$ (see \S\ref{snotata}).
Then the following statements are valid.

1. $H=0$.

2. Assume that $H=\dms(-,E)$ for some $E\in \obj \dms$. Then $E=0$.
\end{coro}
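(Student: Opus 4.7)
Part I reduces immediately to the AB5 hypothesis on $\au$. By definition $H^j_l(X_0) = \inli_i H(\mgbm(X_i)\{-l\}[-j])$ is a filtered colimit of evaluations of the given functor on a fixed system of objects of $\dmcs$. Evaluation at a fixed object commutes with pointwise coproducts of functors, and in an AB5 category filtered colimits commute with arbitrary coproducts; this yields I.1. For I.2, the hypothesis supplies, for each index $i$, middle-exactness of $F(M_i)\to G(M_i)\to H(M_i)$ where $M_i = \mgbm(X_i)\{-l\}[-j]$; since filtered colimits preserve middle-exactness in AB5, I.2 follows after transition-map compatibility is checked (routine from the definition of the transition maps in Definition \ref{dcoh}(\ref{i3})).

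For Part II.1 my strategy is to propagate the vanishing from fields to a generating family of $\dmcs$ via the coniveau spectral sequence. Fix $l\in\z$ and apply Proposition \ref{pcss}(I.1) to the cohomology theory $H_l$ and an arbitrary finite type $X/S$: this yields a convergent spectral sequence with $E_1^{pq} = \coprod_{x\in\xx^p} H_l^q(x)$ abutting to $H_l^{p+q}(X)$. Every Zariski point $x\in\xx^p$ is the spectrum of a residue field, realized as the inverse limit of its open neighbourhoods in $\overline{\{x\}}$, hence essentially of finite type over $S$, i.e.\ a field over $S$ in our sense. The hypothesis therefore kills the entire $E_1$-page, so $H_l^n(X) = \ns$ for all finite type $X/S$ and all $l,n\in\z$; equivalently, $H$ vanishes on $\mgbm(X)(r)[n]$ for every $r,n\in\z$. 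For projective $x:X\to S$ with $X$ regular, Theorem \ref{tcisdeg}(\ref{ipur}) gives $\mgbm(X) = x_!\q_X \cong x_*\q_X$, so $H$ annihilates every object of the generating family singled out in Theorem \ref{tcisdeg}(\ref{igenc}), together with all of its shifts and twists. Since $H$ is cohomological, the full subcategory $\{M\in\obj\dmcs:\ H(M[i]) = \ns\text{ for all }i\in\z\}$ is triangulated and Karoubi-closed; containing the generators, it must be all of $\dmcs$, whence $H=0$.

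For Part II.2 I would promote the vanishing from $\dmcs$ to $\dms$ by a standard localizing-subcategory argument. Set $\mathcal{D} = \{M\in\obj\dms:\ \dms(M,E[i]) = \ns\text{ for all }i\in\z\}$. The functor $\dms(-,E)$ is cohomological and converts small coproducts in its first argument into products, so $\mathcal{D}$ is triangulated, Karoubi-closed, and closed under arbitrary small coproducts. By II.1 it contains $\dmcs$, and by Theorem \ref{tcisdeg}(\ref{imotgen}) the latter generates $\dms$ as a localizing subcategory, so $\mathcal{D} = \dms$. Taking $M = E$ yields $\id_E \in \dms(E,E) = \ns$, hence $E=0$.

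No step presents a genuine obstacle; the only point requiring care is to match twist/shift conventions, i.e.\ to check that vanishing on $\mgbm(X)(r)[n]$ for all regular projective $X/S$ and all $r,n\in\z$ really does cover (up to shifts) the generators of Theorem \ref{tcisdeg}(\ref{igenc}). This is precisely what $x_!\cong x_*$ for proper $x$ ensures.
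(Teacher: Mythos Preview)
Your proof is correct and follows essentially the same route as the paper: Part I is reduced to AB5 via the description of stalks as filtered colimits of evaluations; Part II.1 uses the $\delta$-coniveau spectral sequence of Proposition \ref{pcss}(I.1) to pass from fields to all finite type $X/S$ and then invokes the generating family of Theorem \ref{tcisdeg}(\ref{igenc}); and Part II.2 is the standard localizing-subcategory argument based on Theorem \ref{tcisdeg}(\ref{imotgen}). Your write-up simply makes explicit a couple of points the paper leaves implicit, notably the identification $x_!\q_X\cong x_*\q_X$ for projective $x$ and the triangulated/Karoubi-closed/coproduct-closed nature of the vanishing locus.
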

\begin{proof}
I It suffices to note that the correspondence $H\mapsto H^j_l(X_0)$ is the direct limit of the correspondences $H\mapsto H^j_l(X_i)$ (where $X_0=\prli X_i$), whereas for the latter functors both of the assertions are obviously fulfilled.

II.1. Proposition \ref{pcss}(I.1) yields that $H_l^j(X)=0$ for any $l,j\in \z$ and any 
finite type $X/S$. In particular,  $H^*(x_!\q_X(r))=0$ for any $r\in \z$ and any projective morphism $x:X\to S$  (of finite type) such that $X$ is regular. Hence Theorem \ref{tcisdeg}(\ref{igenc}) yields the result.

2. Immediate from the previous assertion by part \ref{imotgen} of the theorem.
\end{proof}

\section{Main results:  perverse homotopy $t$-structures,  cycle modules, and  Gersten weight structures}

In \S\ref{sthom} we define $\thom(-)$ and prove several properties of these $t$-structures (this includes a comparison of $\thom(S)$ with the perverse homotopy $t$-structure of Ayoub).

In \S\ref{scycmod} we relate $\hrthom(S)$ to cycle modules over $S$ (as defined in \cite{rostc}).

In \S\ref{seff} we introduce a certain (new)
effectivity filtration for $\dms$ and study its properties. 
It turns out that that there exists a natural version of $\thom(S)$ for the corresponding $\dmes\subset \dms$ such that the functors $r:\dms\to \dmes$ (right adjoint to the embedding) and $-_{-1}$ (generalizing the functor defined by Voevodsky) are $t$-exact.

In \S\ref{sfunctcss} we discuss the motivic functoriality of coniveau spectral sequences and their description in terms of $\thom(S)$.

In \S\ref{sgws} we sketch the proof of the main properties of the Gersten weight structure on a certain category $\gdos$ of $S$-comotives. In particular, for a {\it countable} $S$ we obtain a new description of $\hrthom(S)$. Besides, the results of this subsection 
 yield natural proofs of the claims made in the preceding one.

   \subsection{The definition  and main properties of $\thom$}\label{sthom}

   Let us define a $t$-structure on $\dms$  using the preaisle that we have already considered in Theorem \ref{tmgeff}.

\begin{defi}\label{dhts}
Consider the preaisle generated by $\{\obj\chowe(S)\{i\},\ i\in \z\}$ (see Remark \ref{rmgbm}(\ref{ichows}) and Definition \ref{daisle}) and denote by $\thom=\thom(S)$ 
the $t$-structure corresponding to it (see Proposition \ref{paisle}).
\end{defi}

Certainly, this $t$-structure depends on the choice of $\delta$ in the obvious way (so, we will not include $\delta$ in the notation).

   Let us establish a characterization of $\thom$ in terms of 'stalks' at 
   fields over $S$; this implies several other interesting properties of this $t$-structure.

\begin{theo}\label{thts}
I. The following statements are valid.

1. Let $H\in \obj \dms$. 
Then we have $H\in \dms^{\thom\le 0}$ (resp. $H\in \dms^{\thom\ge 0}$, resp. $H\in \dms^{\thom=0}$) if and only if 
for $X_0$ running through all 
fields over $S$ we have $H^l_j(X_0)=\ns$ for all $l>-\dimz( X_0)$ (resp. for all $l<-\dimz( X_0)$, resp. for all $l\neq -\dimz (X_0)$; see 
 Definition \ref{dcoh}(\ref{i3}).

2. $\thom$ is non-degenerate.

3. The functor $-\{r\}$ is $t$-exact (with respect to $\thom$) for any $r\in \z$. 

II Let $a:S'\to S$ be a finite type morphism. Then the following statements are valid.

1. $a^!$ is $t$-exact (with respect to $\thom(S)$ and $\thom(S')$).

2. Assume that $a$ is of relative dimension $\le r$. 
Then $a_*\dmsp^{\thom(S')\ge 0}\subset \dms^{\thom(S)\ge 0}$ and $a_*\dmsp^{\thom(S')\le 0}\subset \dms^{\thom(S)\le r}$.

In particular, $a_*$ is $t$-exact if $a$ is quasi-finite. 

III Let $j:U\to S$  be an open immersion, and $i:Z\to S$  be the complementary closed embedding. Then $\thom(S)$ is the $t$-structure glued from $\thom(Z)$ and $\thom(U)$ (see Definition \ref{dglu}(2)). 

\end{theo}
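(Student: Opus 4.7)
The plan is to establish the parts in the order I.1, I.2, I.3, II, III, with I.1 carrying the main computational load. For I.1 I would begin with the coaisle characterization: by Proposition \ref{paisle}(3), $\dms^{\thom\ge 0}$ is the right orthogonal of positive shifts of the generator set $\{\chowe(S)\{i\}:i\in\z\}$. Unwinding the definition of $\chowes$ from Remark \ref{rmgbm}(\ref{ichows}), this reads as $H\in\dms^{\thom\ge 0}$ iff $H^l_j(Y)=0$ whenever $l<-\dimz(Y)$ for every regular projective $Y/S$. One direction of the stalk characterization is obtained by extending this condition from regular projective $Y$ to arbitrary finite-type $Y/S$ by induction on $\dimz$, using Theorem \ref{tcisdeg}(\ref{imotalt}) together with the localization triangle (\ref{emgys})---exactly as in the proof of Theorem \ref{tmgeff}(I.2)---and then passing from finite-type $Y$ to fields via the direct limit of Definition \ref{dcoh}(\ref{i3}). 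The converse goes through the $\delta$-coniveau spectral sequence of Proposition \ref{pcss}(I.1) applied to $Y$: since a point $x$ of $\delta$-codimension $p$ in $Y$ has $\dimz(x)=\dimz(Y)-p$, the assumed stalk vanishing at fields forces $E_1^{pq}=0$ whenever $p+q<-\dimz(Y)$ (using $p\ge 0$), hence vanishing of the abutment. For the aisle characterization, the key input is Theorem \ref{tmgeff}(II.1): applied to a generator $\mgbms(Y)\lan d\ra\{i\}$ (with $Y$ regular projective, $d=\dimz(Y)$) and a field $X_0$ of $\delta$-dimension $D$, the stalk at $X_0$ becomes $H(X_0)$ for $H=y_*\q_Y(d+i+j)[(d+i+j)+l]$, and matching the theorem's parameters yields the relation $r=l+D$ for its parameter---so the condition $r>0$ is equivalent to $l>-D$, precisely the claimed vanishing threshold on generators. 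Corollary \ref{cres}(I) then propagates this vanishing to the whole preaisle, as stalks at fields are cohomological and commute with coproducts. Conversely, given the stalk hypothesis for $H$, apply the $t$-decomposition $H^{\thom\le 0}\to H\to H^{\thom>0}$: the already-established $\ge 0$ characterization combined with the hypothesis forces all field stalks of $H^{\thom>0}$ to vanish in every twist, whence $H^{\thom>0}=0$ by Corollary \ref{cres}(II.2), and hence $H\in\dms^{\thom\le 0}$. The $t=0$ characterization is then an immediate combination.

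The remaining parts follow relatively quickly. For I.2, an object lying in every $\dms^{\thom\ge l}$ (or every $\dms^{\thom\le l}$) has identically vanishing stalks by I.1, so Corollary \ref{cres}(II.2) forces it to be zero. Part I.3 is formal: the generator set $\{\chowe(S)\{i\}:i\in\z\}$ is manifestly stable under $-\{r\}$, so the preaisle and its right orthogonal are too. For II.1, combining the adjunction $a_!\dashv a^!$ with the composition property of $-_!$ identifies the $a_!$-pushforward of the $S'$-Borel-Moore motive of a finite-type $X'_n/S'$ with the $S$-Borel-Moore motive of $X'_n$ (now viewed as a finite-type $S$-scheme via $a$); hence $(a^!H)^l_j(X'_0)=H^l_j(X'_0)$, where on the right $X'_0$ is regarded as a field over $S$. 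Since the $\delta$-dimension of $X'_0$ is intrinsic and independent of the chosen base, the stalk conditions for $\thom(S')$ and $\thom(S)$ on $X'_0$ coincide, giving $t$-exactness. For II.2 a parallel calculation, now using the base-change isomorphism of Theorem \ref{tcisdeg}(\ref{iexch}), yields $(a_*M)^l_j(X_0)=M^l_j(X_0\times_S S')$, where $X_0\times_S S'$ is essentially of finite type over $S'$ with $\dimz\le\dimz(X_0)+r$; feeding this dimension bound into the coniveau spectral sequence of Corollary \ref{cpsc}(I) bounds the stalks of $a_*M$ in terms of those of $M$, and the shift by $r$ in the aisle direction comes directly from the dimension bound on $\dimz(X_0\times_S S')$.

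Finally Part III is a formal corollary of Remark \ref{rglu}(2), which reduces the gluing assertion to $t$-exactness of $i_*$ and $j^*$. The closed embedding $i$ is quasi-finite of relative dimension zero, so $i_*$ is $t$-exact by II.2; the open embedding $j$ satisfies $j^*=j^!$ by Theorem \ref{tcisdeg}(\ref{ipur}), hence $j^*$ is $t$-exact by II.1. The hardest single step I expect is the aisle direction of I.1, specifically matching the Tate-twist and shift parameters so that Theorem \ref{tmgeff}(II.1) applies to the generators with exactly the threshold $l>-\dimz(X_0)$; the next most delicate issue is the coniveau/dimension bookkeeping in II.2 that produces the precise claimed shift $\le r$ in the aisle together with the absence of shift in the coaisle.
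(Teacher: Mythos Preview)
Your proposal is correct and follows essentially the same route as the paper. The organization and the key inputs (Theorem \ref{tmgeff}(I.2, II), Proposition \ref{paisle}(3), Corollary \ref{cpsc}(II), Corollary \ref{cres}, and Remark \ref{rglu}(2)) all match; your direct adjunction computation for II.1 is exactly what underlies the paper's citation of Proposition \ref{pcss}(II).

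One small clarification for the converse of the aisle characterization in I.1: when you write that ``the already-established $\ge 0$ characterization combined with the hypothesis forces all field stalks of $H^{\thom>0}$ to vanish,'' you are implicitly also using the \emph{forward} aisle direction (just proven) applied to $H^{\thom\le 0}$. Concretely, the long exact sequence of stalks associated with the $t$-decomposition needs three inputs to kill $(H^{\thom>0})^l_j(X_0)$ for all $l$: the $\ge 1$ membership handles $l\le -\dimz(X_0)$; for $l>-\dimz(X_0)$ you need both $H^l_j(X_0)=0$ (hypothesis) and $(H^{\thom\le 0})^{l+1}_j(X_0)=0$ (forward aisle direction). The paper makes this third ingredient explicit; you should too.
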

\begin{proof} I.1. It certainly suffices to find out when $E\in \dms^{\thom\le 0}$ and when $E\in \dms^{\thom\ge 0}$.

Theorem \ref{tmgeff}(I.\ref{idecomp}) yields that $\dms^{\thom\le 0}$ can also be described as the preaisle generated by $\mgbms(X)\lan d\ra\{r\}$ for all $r\in \z,\ d\ge 0$, and 
finite type
$X$ over $S$  of $\delta$-dimension $d$. 

Hence for any $H\in \dms^{\thom\ge 0}$ 
 we have $H^l_j(X_0)=\ns$ for  
any field $X_0$ over $S$ 
	of $\delta$-dimension $e$ and all $l<-e$  (since these 'stalks' are direct limits of the corresponding "values" of $H$, whereas the latter vanish by the orthogonality axiom of $t$-structures).

Conversely, if  all such $H^l_j(X_0)=\ns$, then $\mgbm(X)\{r\}[d+1]\perp H$ for any $r\in \z$ and 
finite type $X/S$ of $\delta$-dimension $d$ 
by 
Corollary \ref{cpsc}(II.2). Hence applying Proposition \ref{paisle}(3) we get $H\in \dms^{\thom\ge 0}$.

Now, let $H\in \dms^{\thom\le 0}$. Then $H^l_j(X_0)=\ns$ for any $l>-e$ and any 
field $X_0/S$ of $\delta$-dimension $e$  
 by Theorem \ref{tmgeff}(II.2).

It remains to prove the converse implication (for some $H\in \dms^{\thom\le 0}$).  Let us take the $\thom$-decomposition $$H^{t\le 0}\to H\to H^{t\ge 1}\to H^{t\le 0}[1] $$ of $H$. By Corollary \ref{cres}(II.2) for $G=\dms(-,H^{t\ge 1})$ it suffices to prove that  $G_r^i(X_0)=\ns$ for all $i,r\in \z$ and all $X_0/S$.
 According to the "$\thom$-positive" part of our assertion (that we have just proved) the latter statement is valid if $i\le -\dimz( X_0)$. On the other hand, all the "positive stalks" of $H$ are zero by our assumption and (as we have already proved) the same is true for $H^{t\le 0}[1] $; thus 
 applying Corollary \ref{cres}(I.2) we conclude the proof.

2. Immediate from the previous assertion combined with Corollary \ref{cres}(II.2).

3. Immediate from the uniqueness of recovering $t$-structures from preaisles (see Remark \ref{rts}(3) or Proposition \ref{paisle}) and the fact that   
$-\{r\}$ is an automorphism of $\dms$ that restricts to a bijection on the "generators" of $\dms^{\thom\le 0}$.

II According to assertion I.1, it suffices to compute the  "stalks" of $a^!H$ for $H\in\obj \dms$ and of $a_*H'$ for $H'\in \obj\dmsp$, respectively.

1.
 Let $X_0=\prli X_i$ be a 
field over $S'$; assume that $X_i$ are irreducible. Then the isomorphism of stalks in question is just a part of Proposition \ref{pcss}(II) (applied to $X$ being one of the $X_i$ and to the twists of $H$).

2.  Proposition \ref{pcss}(III.1) yields that the stalk of $a_*H^j_l$ at $X_0/S$ equals $H^j_l(X_0\times_{S}S')$. Hence the statement desired is immediate from Corollary \ref{cpsc}(II).

III By Remark \ref{rglu}(2), it suffices to note that $i_*=i_!$ and $j^*=j^!$ are $t$-exact, whereas the latter facts are given by  assertion II.

\end{proof}

  \begin{rema}\label{rotex}
  1. Since $H\mapsto H^{-e}_l(X_0)$   yields a cohomological functor on $\dms$ that vanishes on $\dms^{\thom\le -1}$ and on $\dms^{\thom\ge 1}$ (for any fixed $l\in \z$ and a 
  field $X_0/S$ of $\delta$-dimension $e$, 
	the restriction of this functor to $\hrthom(S)$ is  exact (as a functor between abelian categories $\hrthom(S) $ and $ \qv$). Moreover, part I.2 of the theorem immediately yields that the collection of all of these functors yields an exact 
	conservative embedding of $\hrthom(S)$ into $\coprod_{l,X_0}\qv$.
  Certainly, this functor is very far from being full; so it does not yield a description of $\hrt$. We will discuss a possible way to "enhance" this functor to a full embedding in the next subsection.  
  
  2. Certainly, part II of the theorem (together with Remark \ref{rts}(3)) also yields that $a_!$ is right $t$-exact, and $a^*(\dms^{\thom(S)\le 0})\subset \dmsp^{\thom(S')\le r}$ for $a$ of relative dimension $\le r$. 
  
	3. In Definition 2.2.59 of \cite{ayobook} a certain analogue of our $\thom(S)$ was defined as follows. Some base scheme (that we will here denote by $S_0$)  for $S$ was fixed, and the $t$-structure corresponding to the preaisle generated by $x_!x^!f^!\q_{S_0}\{i\},\ i\in \z\,$ was considered, where $f:S\to S_0$ is the structure morphism, $x$ runs through all quasi-projective $S$-morphisms.  

Some properties of this $t$-structure were established in \S2.2 of ibid. 
for a general $S_0$. Yet the (full) analogues of  assertions II and III of our theorem were only established under the assumption of the existence of  certain alterations, 
	that are available only in the case where $S_0$ is the spectrum of a field or a discrete valuation ring (see Theorem 2.1.171 of ibid.).

	Moreover, in this situation Proposition 2.2.69 of ibid. yields that it suffices (in order to 'generate' Ayoub's $t$-structure) to consider only those morphisms $x$ that are projective with regular domain. For such an $X$ Theorem \ref{tcisdeg}(\ref{ipur},\ref{ipura})  easily yields that  $x_!x^!f^!\q_{S_0}\cong \mgbm(X)\lan \dimz X-\dimz S_0\ra$. Hence in this case our $\thom(S)$ differs from Ayoub's perverse homotopy $t$-structure only by 
a shift (that depends on the choice of $\delta$).	
	
  4. The previous remark together with   Corollary 2.2.94 of \cite{ayobook} yields the following: 
 if $S$ is the spectrum of a perfect field $F$, $\thom(S)$ (essentially) coincides with the 'usual' homotopy $t$-structure on $\dms$ considered in ibid. The latter $t$-structure was also considered in (Proposition 5.6 of) \cite{degmod}, whereas
  Theorem 5.11 
  of  ibid. (along with Lemma 5.5 of ibid. and part I.1 of our theorem) yields another proof of 
		this coincidence of $t$-structures result. Next, the $t$-structure studied in \cite{degmod}  is compatible with the  Voevodsky's homotopy $t$-structure on $\dme(F)$ by 5.7(1) of ibid. These observations together with Theorem \ref{thts}(II) justify the name "perverse homotopy $t$-structure" for $\thom(S)$ (yet note that the author did not check whether $\thom(S)$ essentially 
		coincides with the Ayoub's $t$-structure for a 'more general' $S_0$).

5. One can easily see (cf. the beginning of  \S\ref{sbmp}) that our proof Theorem \ref{thts} can be generalized to a more general setting of {\it triangulated motivic categories} (cf. 
\cite{degcis}). So, one should consider an arbitrary assignment $X\mapsto \dm(X)$ for $X$ running through finite type $S_0$-schemes 
that satisfies all properties listed in Theorem \ref{tcisdeg} expect (possibly) part \ref{ivoemot}, and it suffices to demand part \ref{ivanmc} of the theorem to be fulfilled only in the case where $X$ is of finite type over a field.

In particular, instead of motives with rational coefficients we could have considered motives whose coefficient ring $R$ is an arbitrary (commutative unital) $\q$-algebra. One can also prove similar results 
 for other $R$ provided that certain analogues of part \ref{imotalt} of the Theorem are known over the corresponding base scheme (it suffices to replace $R$ by $R_{(l)}$ in it, where $l$ runs through all primes that are not invertible in $l$; cf. \cite{degchz}, \cite{bzp},  and \cite{kellymotcharp}).
  
\end{rema}

   
   \subsection{On the relation to cycle modules}\label{scycmod}
   
   Let us relate the objects of $\hrthom$ to {\it cycle modules} over $S$ as defined by Rost. To this end we study the "stalks" of a 
   cohomological functor $H:\dmcs\to \au$ and their "residues" (with respect to $S$-valuations). 

   \begin{defi}\label{dval}
For $X_0$ being the spectrum of a field $F$ over $S$ (see \S\ref{snotata}) we will say that $v$ is an {\it $S$-valuation of $F$} if it is a discrete (rank one) valuation on $F$ that yields a 
codimension $1$ subscheme of some irreducible finite type $X/S$ such that $F$ is the fraction field of $X$. 

We denote the spectrum of the residue field $F_v$ of $v$ by $X_v$.

If $F$ is a residue field of a 
 finite type $X'/S$, 
we will call $v$ an $X'$-valuation of $F$ if it yields a  subscheme of $X'$ of 
codimension $\codim_{X'}X_0+1$ (so, one may say that we are only interested in valuations {\it of the first kind}; cf.  Definition 8.3.18 and the whole \S8.3.2 of \cite{liu}). 

\end{defi}

\begin{pr}\label{pcycmod}
Let $H:\dmcs\to \au$ be a cohomological functor (for an AB5 category $\au$) and let $X_0$ be the spectrum of a 
field $F$ over $S$; denote $\dimz(X_0)$ by $e$; $l$ will denote an integer.
Then the following statements are valid.

I.1. We have a natural graded action of the graded Milnor $K$-theory algebra $\bigoplus_{l\ge 0} K^l_M(F)\otimes \q$ on the object $\coprod_{l\in \z} H^{-e}_l(X_0)$. 

2. For a morphism $m:X'_0\to X_0$ of spectra of $S$-fields  denote the corresponding morphism of fields by $m'$. Then for any $k\in K_M(F)$ we have
$H(m)\circ (k\times_{X_0} -)=m'(k)\times_{X_0'} H(m)(-)$, where $\times$ denotes the 
action introduced in the previous assertion.

II 
For a valuation $v$ of $F$ choose some $X$ 
as in Definition \ref{dval}, and define $\partial_{v,l}:H^{-e}_l(X_0)\to H^{1-e}_{l}(X_v)$ as the corresponding component of the boundary morphism  $E_1^{\dimz(X)-e,-e}\to E_1^{\dimz (X)-e+1,1-e}$ in the $\delta$-coniveau spectral sequence converging to $H^{*}_l(X)$ 
(
recall that the latter morphism is also a differential of the  Cousin complex $C_{-\dimz( X)}(X,x^!H_l,L)$ mentioned in  Remark \ref{rcouples}(3)).

Then the following statements are valid.

1. $\partial_{v,l}$ does not depend on the choice of $X$. 

2. For any finite type $X''/S$ containing 
$(X_0,X_v)$ (i.e. containing the spectrum of the discrete valuation ring for $v$) the corresponding component of the boundary in the $\delta$-coniveau spectral sequence converging to $H^*_{l}(X'')$ 
equals $\partial_{v,l}$. 

3. Let $X_1$ be a points of $X$ such that  $\dimz( X_1)=e-1=\dimz(X_0)-1$, 
 but $X_1$ is not the spectrum of any residue field of $F$ (for an $X$-valuation). 
Then for any $l\in \z$ the morphism  $H^{-e}_l(X_0)\to H^{1-e}_{l}(X_1)$ corresponding to the $\delta$-coniveau spectral sequence for $H^*_l(X)$, is zero. 

III Assume that  $\au=\qv$; let $X_0$ be a point of a finite type $X/S$, $m\in H^{-e}_l(X_0)$.

1.  The set of  $X$-valuations of $F$ 
such that  $\partial_{v,l}(m)\neq 0$, is finite.

2. 
$\sum_{v_0,v_1}\partial_{v_1,l}\circ \partial_{v_0,l}(m)=0$. Here we take the sum over the finite set of pairs $(v_0,v_1)$ such that $v_0$ is an $X$-valuation of  $F$,  $v_1$ is an $X$-valuation of $F_{v_0}$, $\partial_{v_1,l}\circ \partial_{v_0,l}(m)\neq 0$; we consider this sum as an element of $\coprod_{x\in X^{\codimz (X_0)+2}}H_l^{2-e}(x)$. 

\end{pr}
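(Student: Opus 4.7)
The plan is to deduce both assertions from the fact that $d_1\circ d_1=0$ in the $\delta$-coniveau spectral sequence of Proposition \ref{pcss}(I.1) converging to $H^*_l(X)$, combined with parts II.1--II.3 which we have just established.

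For III.1, the first boundary $d_1(m)$ lies in $\coprod_{x\in X^{\codimz(X_0)+1}}H^{1-e}_l(x)$; since $\au=\qv$, any element of such a direct sum has finite support. By II.3 the only indices that can contribute are the $X_v$ for $X$-valuations $v$ of $F$, and for each such $v$ part II.2 (applied with $X''=X$) identifies the corresponding component with $\partial_{v,l}(m)$. Hence only finitely many $v$ satisfy $\partial_{v,l}(m)\neq 0$.

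For III.2 I would expand the identity $(d_1\circ d_1)(m)=0$ at the $X_0$-summand of $E_1$. The previous paragraph gives $d_1(m)=\sum_{v_0}\partial_{v_0,l}(m)$, a finite sum indexed by $X$-valuations of $F$. Applying $d_1$ once more produces an element of $\coprod_{y\in X^{\codimz(X_0)+2}}H^{2-e}_l(y)$. For each $X$-valuation $v_1$ of $F_{v_0}$ the point $X_{v_1}$ has $\delta$-codimension two in $X$, so $X$ contains the DVR $\mathcal{O}_{v_1}$, and II.2 (applied with the pair $(X_{v_0},X_{v_1})$ and $X''=X$) identifies the $X_{v_0}\to X_{v_1}$ component of the second $d_1$ with $\partial_{v_1,l}$. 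Assuming the other components vanish, the relation $d_1\circ d_1=0$ becomes the desired identity $\sum_{v_0,v_1}\partial_{v_1,l}\circ\partial_{v_0,l}(m)=0$, with finiteness of the sum following from two applications of III.1.

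The main obstacle is the vanishing claim in the previous paragraph: II.3 as stated concerns only the boundary originating from the \emph{generic} point of $X$, whereas here I need to rule out contributions at codimension-two points $y\in X$ that are not residue fields of $X$-valuations of $F_{v_0}$ for the (finitely many) relevant $v_0$. I would handle this by applying II.3 to $\cl_X X_{v_0}$ (whose generic point is $X_{v_0}$) and then transferring the resulting vanishing of boundary components back to the spectral sequence for $X$; this last step is essentially encoded in II.2, since the $X_{v_0}\to y$ component of $d_1$ only depends on the behaviour of $H^*_l$ in a neighbourhood of $X_{v_0}$ (compare the local/birational Cousin-complex description of Remark \ref{rcouples}(3)). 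With this vanishing in hand, the argument above concludes the proof.
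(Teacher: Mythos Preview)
Your argument is correct and follows the same route as the paper's own (very terse) proof, which simply says that ``assertions II.3 and III.1 yield that $\sum_{v_0,v_1}\partial_{v_1,l}\circ \partial_{v_0,l}(m)$ equals the corresponding value of the square of the boundary''; you have unpacked this in the right way. One small sharpening: your handling of the ``obstacle'' is correct in substance but the phrase ``via II.2'' is slightly off. What you actually need is not II.2 but a second instance of II.3 itself: since $X_{v_0}$ is again the spectrum of a field over $S$, the whole proposition applies with $X_0$ replaced by $X_{v_0}$ and $X$ replaced by $\cl_X X_{v_0}$, so II.3 directly gives the vanishing of the $X_{v_0}\to y$ components for $y\notin \cl_X X_{v_0}$; the transfer to the spectral sequence for $X$ is then the closed-embedding compatibility of Proposition~\ref{pcss}(I.4) and Remark~\ref{rcouples}(1) (the $E_1$-terms for $\cl_X X_{v_0}$ split-inject into those for $X$), which is exactly the mechanism behind the proof of II.3 rather than the content of II.2.
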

\begin{proof}
I.1. Similarly to \S5 
 of \cite{degenmot}, we note that it suffices to compute the morphisms between the formal pro-objects $\mgbm(X_0)[e]\{l\}$ (cf. \S4.1 of ibid.). The latter are defined as the formal inverse limits $\prli \mg(X_i)[e]\{l\}$ where $\prli X_i=X_0$ (certainly, this pro-object does not depend on the choice of the system $X_i$).
Theorem \ref{tcisdeg}(\ref{imotfun},\ref{itate},\ref{icont}) easily yields 
 $$\begin{gathered} \pdmcs(\mgbm(X_0)[e]\{l\}, \mgbm(X_0)[e]\{l'\})\cong \inli \dmcs \mgbm(X_i)\{l\}, \mgbm(X_i)\{l'\})\\ \cong \inli \dmc(X_i)(\q_{X_i}\{l\},\q_{X_i}\{l'\})\cong  \dmc(X_0)(\q_{X_0}\{l\},\q_{X_0}\{l'\}).\end{gathered}$$ Now, if $F$ is perfect then  the latter group is isomorphic to $K^{l'-l}_M(F)\otimes \q$ by assertion \ref{ivoemot} of the theorem and the well-known relation between motivic cohomology and Milnor $K$-groups (see Theorem 4.3.4 of \cite{degenmot} 
  proved by Suslin and Voevodsky). In order to establish this isomorphism in general (via replacing $F$ by its perfect closure) it suffices to apply part \ref{ifuh} of the theorem (note that passing to the perfect closure does not affect $K^{*}_M(F)\otimes \q$).

2. It suffices to note that the arguments above are natural with respect to morphisms of $S$-fields. 

II.1. It certainly suffices to compare the versions of $\partial_{v,l}$ corresponding to $X'$ with that for its open (dense) subvariety $X''$. 
Hence (by Remark \ref{rcouples}(3))
 the statement is immediate from 
the corresponding "functoriality" of the splitting (\ref{ee1}) (see Proposition \ref{pcss}(I.3) and Remark \ref{rcouples}(1)).

2. 
As we have just noted, 
Proposition \ref{pcss}(I.3)
yields that the corresponding component of the boundary morphism does not change when we pass from $X$ to its open subscheme. Thus it remains to note that part I.4 of the proposition
  yields a similar compatibility  for closed embeddings. 

3. Denote by $X'$ the closure of $X_0$ in $X$. Our assertion follows (again) from the compatibility of coniveau spectral sequences converging to 
$H_l^*(X')$ and to $H_l^*(X)$, respectively. 

III.1. This is just the condition for the set of all $\partial_{v_0,l}$ to define a map $H^{-e}_l(X_0)\to\coprod_{x\in X^{\codimz( X_0)+1}}H_l^{1-e}(x)$.

2. It suffices to note the following: assertions II.3 and III.1 yield that $\sum_{v_0,v_1}\partial_{v_1,l}\circ \partial_{v_0,l}(m)$ equals the corresponding value of the square of the boundary for  $C_{-\dimz( X)}(X,x^!H_l,L)$.
\end{proof}

\begin{rema}\label{rcymod}

1. Certainly, for a general $\au$ an  action  of $\bigoplus_{l\ge 0} K^l_M(F)\otimes \q$ on the object $\coprod H^{-e}_l(X_0)$ is just a  homomorphism from
 $\bigoplus K^l_M(F)$ into the endomorphism algebra of $\coprod H^{-e}_l(X_0)$.
 
 One can also generalize assertion III.1 of the proposition to the case of a general $\au$; to this end instead of an element $m$ of $H^{-e}_l(X_0)$ 
one can consider a morphism to  $H^{-e}_l(X_0)$ from a compact object of $\au$. 

2. Certainly, we could have considered $\coprod_{l\in \z,X_0/S} H^{-\dimz(X_0)+i}_l(X_0)$ for any $i\in \z$. Yet we are mostly interested in those $H$ for which
all $H^{-\dimz(X_0)+i}_l(X_0)$ (and so, the whole Cousin complexes  $C_{i-\dimz( X)}(X,x^!H_l,L)$) vanish if $i\neq 0$ (we could have called an $H$ of this sort a {\it pure} cohomology theory; cf. Definition 2.3.3 of \cite{bgern}). Certainly, if $H$ is (represented by) an object of $\dms$, the latter condition is equivalent to $H\in \dms^{\thom=0}$; the functor $H_0^{\thom}$ produces such a theory from any $\dms$-representable one. For a general $H$ one can construct a similar "truncation" for it using {\it virtual $t$-truncations} with respect to the {\it Gersten weight structure}; see \S\ref{sgws}
 below.
 
 3. We conjecture that the functor $H_{cm}:H\mapsto (\coprod_{l\in \z,X_0/S} H^{-\dimz(X_0)} 
 _l(X_0),\coprod_{v,l}\partial_{v,l})$ yields an equivalence of $\hrthom(S)$ with a certain abelian category of cycle modules. In the case where $S$ is the spectrum of a perfect field this conjecture (and even its integral coefficient version) is given by Theorem 5.11 of \cite{degmod}; Theorem 4.1. of \cite{degchz} extends the latter result 
to varieties over  characteristic $0$ fields.
 
  In Proposition \ref{pcycmod} we have 
  constructed for the elements of $H_{cm}(\dms^{t=0})$ the structure maps mentioned in parts D3,4 of Definition 1.1 of \cite{rostc}. Constructing the maps mentioned in parts D1,2 of loc. cit. is quite simple. Next, the (analogues of) axioms R1a,b of loc. cit. are quite easy in our setting. Also, above we have proved
 the natural analogues of axiom R2a of loc. cit., as well as axioms FD and C of Definition 2.1 of ibid. Possibly, some of the remaining axioms of ibid. have to be adjusted for our setting (this could include multiplying some of our maps by $-1$), and some additional conditions should possibly be invented (if we want to obtain an equivalence of categories, i.e., a full description of $\hrthom(S)$). One of possible difficulties for axiom checks could be the  studying of $\partial_{v,l}$ 
 in the case where $F$ and $F_v$ do not lie over a single point of $S$ (especially if their characteristics are distinct).
 On the other hand, one can possibly reduce our conjecture to the case where $S$ is the spectrum of a 
  discrete valuation ring $R$ via an application of Theorem \ref{thts}(II,III); moreover, it could be sufficient to consider only the case where $R$ is a Henselian valuation ring.
 
 Lastly 
 we note that some of the consequences of the axioms of ibid. 
 seem to be quite easy to prove in our situation (avoiding those axioms of Rost that we do not prove here); cf. parts R2d,e of his Definition 1.1. 
 
 4.  Parts II.2--3 of the proposition immediately yield that  $H_{cm}(H)$ for an $H\in \dms^{\thom=0}$ 
allows the computation of $H^*_*(X)$ as the cohomology of  the Cousin complex  $C_{-\dimz( X)}(X,x^!H_l,L)$ (for any $X$ of finite type over $S$; cf. also the proof of part III.2 of the proposition).

5. Note also that for any $H\in \dms^{\thom=0}$ and any open dense embedding $j:U\to X$ of 
 finite type $S$-schemes of $\delta$-dimension $d$ the corresponding homomorphism $H^{-d}(j)$ (see 
Definition \ref{dcoh}(\ref{i2})) is obviously injective. Certainly, this is also true for any pure cohomology theory $H:\dms\to \au$ (see part 2 of this remark). More generally, $H^{i-d}(j)$ is  injective (resp. bijective)  for any pure $H$ whenever $X\setminus U$ is of $\delta$-codimension $>i$ (resp. $>i+1$) in $X$ (for an $i\in \z$, and $j$ as above).

 6. It would certainly be interesting to extend (at least, some of) the results of the current paper to motives with integral coefficients (or maybe one should 
 take a coefficient ring 
 containing the inverses to all of the positive residue field characteristics for some fixed base scheme $S_0$ chosen; cf. \cite{bzp}). Yet the author does not know which version of $S$-motives would be most appropriate for this purposes (in particular, which of them is related to cycle modules with integral coefficients). F. D\'eglise has suggested considering Morel's $\tau$-positive part of $SH(S)$ 
  or some its analogue (see Theorem 2.16 of \cite{degchz}) in this context. 
 

\end{rema}

  \subsection{The $\delta$-effectivity filtration for $\dm(S)$}\label{seff}
  
In the case where $S$ is the spectrum of a perfect field $k$, Voevodsky (in \cite{1}) considered the  category $\dme$ of so-called motivic complexes.
This is 
'almost' the subcategory of $\dms$ generated by $\chowes$ as a localizing subcategory; $\thom(S)$ has a natural analogue $\thome(k)$  for it (that was defined earlier than the 'stable' version). 
 It turns out that some of the nice properties of $\dme$ and $\thome(k)$ can easily carried out to our relative setting (though their proofs are quite distinct from the 'old' ones over perfect fields). 
 
Let us recall the following well-known 
 fact.

\begin{pr}\label{pbous}
Let $\cu$ be a triangulated category such that the coproduct of any set of objects of $\cu$ exists in it; let $B\subset \obj \cu$ be a set of compact objects.
 Denote by $\du$ the subcategory of $\cu$ generated by $B$ as a localizing subcategory.

Then the inclusion $\du\subset \cu$ admits an exact right adjoint. 
\end{pr}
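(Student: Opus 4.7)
The plan is to apply Brown representability to $\du$. First I would observe that $\du$ is a compactly generated triangulated category in its own right: objects of $B$ are compact in $\cu$, and compactness is preserved when passing to a triangulated subcategory closed under all coproducts existing in $\cu$ (any morphism from a $B$-object to a coproduct in $\du$ is also a morphism to that coproduct computed in $\cu$, hence factors through a finite subcoproduct). Moreover, $B$ generates $\du$ as a localizing subcategory by assumption, so by Neeman's theorem (see Theorem 8.3.3 of \cite{neebook}, or equivalently the representability theorem of Chapter 8 therein) every cohomological functor $\du^{op}\to \ab$ that sends coproducts in $\du$ to products is representable.

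The key step is then the following construction of the right adjoint $r:\cu\to \du$. For each $X\in\obj\cu$, consider the functor
\[F_X:\du^{op}\to \ab,\qquad D\mapsto \cu(D,X).\]
This functor is cohomological (since $\cu(-,X)$ is), and it sends coproducts in $\du$ to products (again because these coproducts agree with coproducts in $\cu$ and $\cu(-,X)$ converts them into products). Brown representability then yields an object $r(X)\in \obj\du$ together with a natural isomorphism $\du(-,r(X))\cong \cu(-,X)|_{\du}$. The Yoneda lemma upgrades $X\mapsto r(X)$ to a functor and makes the isomorphism natural in $X$ as well, which is exactly the adjunction we want.

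Finally, exactness of $r$ is automatic: any right adjoint between triangulated categories whose left adjoint is exact is itself exact. Indeed, the embedding $\du\hookrightarrow\cu$ is tautologically exact, and one checks that $r$ takes a distinguished triangle $X\to Y\to Z$ in $\cu$ to a candidate triangle whose image under each representable functor $\du(D,-)$ (for $D\in\obj\du$) is long exact, by the adjunction together with the long exact sequence for $\cu(D,-)$; applying this to $D=r(X),r(Y),r(Z)$ and invoking the standard criterion (e.g.\ using that $\du$ has a generating family of compact objects) forces the candidate to be distinguished.

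The main potential obstacle would be verifying the hypotheses of Brown representability, but these follow immediately from the setup: $B$ consists of compact objects of $\du$ (as noted above) and generates $\du$ as a localizing subcategory by hypothesis, which are precisely the inputs needed. No delicate cardinality or set-theoretic estimates are required beyond those absorbed in the cited theorem of Neeman.
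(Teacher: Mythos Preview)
Your proposal is correct and follows essentially the same route as the paper: both invoke Brown representability for the compactly generated category $\du$ (the paper cites Theorem 8.3.3 of \cite{neebook}) and then deduce the existence of the right adjoint (the paper refers to Proposition 9.1.19 of ibid.\ for this step). You simply spell out in more detail what the paper leaves implicit.
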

\begin{proof}
By Theorem 8.3.3 of \cite{neebook} the category $\du$ satisfies the following Brown
 representability condition: 
  every cohomological functor from $\du$ to $\ab$ that converts $\du$-coproducts into products of abelian groups is representable in $\du$. 
  The latter statement easily yields 
  the result (cf. the proof of Proposition 9.1.19 of ibid.).  
 
\end{proof}

Now we 
define a certain category
$\dmes$ and relate it to the perverse homotopy $t$-structure.

\begin{pr}\label{pdme}
Denote by $\dmes$ the subcategory of $\dms$ 
generated by $\chowes$ as localizing subcategory. 

Denote by 
$\thome=\thome(S)$ the $t$-structure corresponding to the preaisle generated by $\{\obj\chowe(S)\{i\}[j],\ i\ge 0,\ j\ge 0\}$ (see Definition \ref{daisle} and Proposition \ref{paisle}(1)).

Then the following statements are valid.

1. Let $H\in \obj \dmes$. 

Then we have $H\in \dms^{\thome\le 0}$ (resp. $H\in \dmes^{\thome\ge 0}$, resp. $H\in \dms^{\thome=0}$) if and only if 
for $X_0$ running through all 
fields over $S$ we have $H^l_j(X_0)=\ns$ for all $l>-\dimz( X_0)$ (resp. for all $l<-\dimz( X_0)$, resp. for all $l\neq -\dimz( X_0)$; see 
Definition \ref{dcoh}(\ref{i3})) and all $j\le -\dimz( X_0)$. 

2. $\thome$ is non-degenerate.

3. There exist exact right adjoints $r$ 
and $r^{1}$ to the embeddings $\dmes\to \dms$ and $\dmes(1)\to \dmes$, 
respectively.

4. $r$ (resp. the functor $-_{-1}:\dmes\to \dmes$; $M\mapsto M_{-1}=r^1(M)\{-1\}$) 
 is $t$-exact with respect to $\thome$ and $\thom$ (resp. with respect to $\thome$).

\end{pr}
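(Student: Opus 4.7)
For part 1, the strategy parallels the proof of Theorem \ref{thts}(I.1), adapted to the effective regime. First I would establish an effective analogue of Theorem \ref{tmgeff}(I.2): the preaisle $\dmes^{\thome\le 0}$ contains $\mgbm(X)\lan d\ra\{r\}$ for any finite type $X/S$ of $\delta$-dimension $d$ and any $r\ge 0$. The proof copies the one for Theorem \ref{tmgeff}(I.2) almost verbatim (induction on $d$ using the distinguished triangle (\ref{emgys}) and the alteration supplied by Theorem \ref{tcisdeg}(\ref{imotalt})), together with the observation that the functor $-\{r\}$ for $r\ge 0$ preserves the set of generators $\chowe(S)\{i\}$ with $i\ge 0$. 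For the characterization of $\dmes^{\thome\ge 0}$, the forward direction uses that when $j\le -e$ (with $e=\dimz(X_0)$) the object $\mgbm(X_i)\{-j\}[-l]$ coincides with $\mgbm(X_i)\lan e\ra\{-j-e\}[-l-e]$, which lies in $\dmes^{\thome\le l+e}$ by the effective Theorem \ref{tmgeff}(I.2) analogue and hence in $\dmes^{\thome\le -1}$ when $l<-e$; orthogonality with $H$ kills the stalk. The converse direction applies Proposition \ref{paisle}(3) together with Corollary \ref{cpsc}(II.2) to the twisted cohomology $H_j$ for each fixed $j\le -e$. For the $\dmes^{\thome\le 0}$ characterization I would use Corollary \ref{cres}(I) to reduce vanishing on the preaisle to vanishing on the generators $\mgbm(Y)\lan e_Y\ra\{r\}[m]$ with $Y$ regular projective of $\delta$-dimension $e_Y$ and $r,m\ge 0$; since $y:Y\to S$ is projective one has $y_*=y_!$, and matching with the form in Theorem \ref{tmgeff}(II.1) gives parameters $s=e+r+j$ and $r_0=e+m+l$, where the hypotheses $l>-e$ and $m\ge 0$ force $r_0\ge 1>0$, so Theorem \ref{tmgeff}(II.1) applies. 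The heart case is the intersection of the two.

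Part 2 is then immediate: combining part 1 with Theorem \ref{thts}(I.1) gives $\dmes^{\thome\ge 0}=\dms^{\thom\ge 0}\cap\obj\dmes$ (and similarly for $\le 0$), so the non-degeneracy of $\thome$ reduces at once to that of $\thom$ (Theorem \ref{thts}(I.2)). For part 3 I would apply Proposition \ref{pbous} twice. The right adjoint $r$ is immediate because $\dmes$ is generated by the compact objects in $\chowes$. For $r^1$ I first need the closure of $\dmes$ under the Tate twist $(1)$: applying Theorem \ref{tcisdeg}(\ref{itate}) to $\p^1_X$ gives $\mgbm(\p^1_X)\cong\mgbm(X)\oplus\mgbm(X)(-1)[-2]$, so for $X$ regular projective of $\delta$-dimension $d$ the decomposition $\mgbm(\p^1_X)\lan d+1\ra\cong\mgbm(X)(d+1)[2d+2]\oplus\mgbm(X)\lan d\ra$ and Karoubi-closure force $\mgbm(X)(d+1)[2d+2]\in\chowes$; thus $\mgbm(X)\lan d\ra(1)=\mgbm(X)(d+1)[2d]$ is a shift of this summand and so lies in $\dmes$. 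This yields $\chowes(1)\subset\dmes$, so $\dmes(1)\subset\dmes$, and Proposition \ref{pbous} applies to the compact generators $\chowes(1)$ of $\dmes(1)$ (viewed inside $\dmes$) to produce $r^1$.

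For part 4, the heart of the argument is to show that $r$ is $t$-exact. Left $t$-exactness of $r$ is equivalent to right $t$-exactness of the embedding $\dmes\hookrightarrow\dms$, which is immediate from the preaisle containment. For right $t$-exactness of $r$, I would use a stalk computation: for $M\in\dms^{\thom\le 0}$ and any $j\le -e$, the condition $-j\ge e$ together with the closure of $\dmes$ under positive Tate twists (from part 3) ensures $\mgbm(X_i)\{-j\}[-l]\in\dmes$; the adjunction then gives $r(M)^l_j(X_0)=M^l_j(X_0)$, which vanishes for $l>-e$ by Theorem \ref{thts}(I.1), so part 1 forces $r(M)\in\dmes^{\thome\le 0}$. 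For the $t$-exactness of $-_{-1}$, a short adjunction computation (using $i_1\dashv r^1$ together with the Tate-twist adjunction $-\{1\}\dashv -\{-1\}$ in $\dms$) identifies $r^1(N)\{-1\}$ with $r(N\{-1\})$, exhibiting $-_{-1}$ as the composition $r\circ(-\{-1\})$ of the $t$-exact functor $-\{-1\}$ on $\dms$ (Theorem \ref{thts}(I.3)) with the just-proven $t$-exact functor $r$. The main technical obstacle is the interdependence between parts 3 and 4: the stalk computation proving right $t$-exactness of $r$ hinges on the effectivity of $\mgbm(X_i)\{-j\}[-l]$ for $j\le -e$, which itself requires the closure of $\dmes$ under positive Tate twists supplied by the projective bundle decomposition in part 3.
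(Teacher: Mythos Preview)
Your approach matches the paper's (the paper merely says the method of Theorem \ref{thts}(I.1--2) ``carries over'' and that parts 3--4 are ``immediate'' from Proposition \ref{pbous} and assertion 1), and you supply useful detail the paper omits --- in particular the projective bundle argument showing $\dmes(1)\subset\dmes$, which is genuinely needed both for $r^1$ and for the stalk computations.

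There is, however, one real gap. In part 2 you assert that combining part 1 with Theorem \ref{thts}(I.1) yields $\dmes^{\thome\ge 0}=\dms^{\thom\ge 0}\cap\obj\dmes$ (and similarly for $\le 0$). The containment $\dms^{\thom\ge 0}\cap\obj\dmes\subset\dmes^{\thome\ge 0}$ is immediate from the stalk descriptions, but the reverse containment is not: the effective condition only controls stalks with $j\le -\dimz(X_0)$, whereas membership in $\dms^{\thom\ge 0}$ requires vanishing for all $j$. So your reduction of non-degeneracy to the stable case is unjustified as written. The same issue resurfaces in part 4: writing $-_{-1}=r\circ(-\{-1\})$ and invoking $t$-exactness of the two displayed functors tacitly assumes that the inclusion $\dmes\hookrightarrow\dms$ is (fully) $t$-exact, which is exactly the unproved containment.

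Both points are easily repaired without the intersection formula. For part 2, prove the effective analogue of Corollary \ref{cres}(II.2): an object of $\dmes$ is zero once all stalks with $j\le -\dimz(X_0)$ vanish, because $\chowes$ compactly generates $\dmes$ and (via the $\delta$-coniveau spectral sequence) maps from $\mgbm(Y)\lan d_Y\ra[n]$ into $H$ are computed from stalks at points $y\in Y$, all of which satisfy $-d_Y\le -\dimz(y)$. Non-degeneracy then follows from part 1 exactly as in Theorem \ref{thts}(I.2). For $-_{-1}$, just run the stalk argument directly as you did for $r$: your identity $N_{-1}\cong r(N\{-1\})$ together with the adjunction for $r$ gives $(N_{-1})^l_j(X_0)=N^l_{j-1}(X_0)$ whenever $j\le -\dimz(X_0)$, and since $j-1\le -\dimz(X_0)$ as well, part 1 applied to $N$ finishes the job.
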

\begin{proof}
1,2: the method of the proof of Theorem \ref{thts}(I.1--2) carries over to this "$\delta$-effective" setting without any difficulty.

3. Immediate from Proposition \ref{pbous}. 

4. Immediate from assertion 1 (recall also Theorem \ref{thts}(I.1) in order to verify the $t$-exactness of $r$).
\end{proof}
 
 \begin{rema}\label{rdme}
1. 
It is easily seen that our definition of $\dmes$ coincides with (the rational coefficient version of) the one considered in \cite{degmod} if  $S$ is the spectrum of a perfect field (and $\delta$ is the Krull dimension function). 

So, our assertion 4 corresponds to Theorem 5.3 and 
5.7(1) of 
ibid. (where the corresponding facts were proved for motives with integral coefficients over a perfect field), whereas assertion
1 is the natural analogue to Proposition 5.6 of ibid. (cf. also Lemma 4.36 and Theorem 5.7 of \cite{3}). 

2. Obviously, the filtration of $\dmes$ given by $\dmes(i)$ (for $i\in\z$) is not exhausting (i.e. there exist $S$-motives neither of whose twists are $\delta$-effective). Quite probably it is also not separated (i.e., there exist "infinitely effective" motives). 

On the other hand, 
the restriction of this '$\delta$-effectivity filtration' to $\dmcs$ is obviously exhausting. It also seems that the study of the functoriality properties of our effectivity filtration (that could be quite interesting for itself) would yield that this restriction is separated (by Noetherian induction on 
$S$).

3. One could also have considered for $\dmes$ the  effective version of the Chow $t$-structure $\tcho$ for $\dms$ defined in \S A2 of \cite{brelmot}, that is given by the preaisle generated by  $\{\chowe(S)[j],\  j\ge 0\}$ (as a localizing subcategory). It is easily seen from the corresponding "Chow" analogue of part 3 of the proposition 
 that the functors $r$ and $-_{-1,\chow}:M\mapsto r^1(M)(-1)[-2] $ are $t$-exact with respect to these Chow $t$-structures.
This seems to imply part 4 of our proposition easily.


4. In \cite{degcis}  also certain effective motivic categories over $S$ were constructed; they map to $\dms$, and so one could define the corresponding effective parts of $\dms$ as the localizing subcategories generated by the images of these functors. 
 All of these effective subcategories coincide and are generated by $\mgs(X)$ for $X$ smooth over $S$. Now, assume that $S$ is (everywhere) of $\delta$-dimension $s\ge 0$. Then our $\dmes$ contains    $\mgs(X)(s)$ for all smooth $X/S$; yet 
 the author believes that  it is not generated by  objects of this type (and so, it is bigger and cannot be described in terms of $\mgs(-)$). Thus, if we define $DM^{\delta}_{bir}(S)$ as the Verdier quotient $\dmes/\dmes(1)$, there will exist a natural comparison functor $DM^o_{gm}(S)\to DM^{\delta}_{bir}(S)$ where  the categories $DM^o_{gm}(-)$ are the ones considered in \S5 of \cite{bososn} (and generalizing the ones defined in \cite{kabir}); yet this functor is not invertible for a general $S$.

\end{rema}
  

  \subsection{On the functoriality of  $\delta$-coniveau filtrations and spectral sequences, and their relation to $\thom$}\label{sfunctcss}
  
  Now let us relate $\delta$-coniveau spectral sequences and filtrations to those coming from $\thom$. To the opinion of the author, the most natural method for proving the following proposition uses the Gersten weight structure on $S$-comotives (that we will discuss in the next subsection); yet a part of this statement can be obtained via "more elementary" arguments.
  
  \begin{pr}\label{pcfilt}
  Let $X,X'$ be finite type $S$-schemes of $\delta$-dimensions $d$ and $d'$, respectively.
  
  I Let $H:\dmcs\to \au$ be a cohomological functor, and let $\au$ be an AB5 abelian category.
  
Then for the $\delta$-coniveau spectral sequences and filtrations introduced in Proposition \ref{pcss}(I.1) the following statements are valid.

1. The 
$\delta$-coniveau filtration $F^jH^*$ on $H(\mgbm(-))$ is $\dmcs$-functorial in the following sense: for any 
$h\in \dmcs(\mgbm(X'),\mgbm(X))$, $j\in \z$, the morphism $H(h)$ 
maps $F^jH(X)$ into $F^{j+d'-d}H(X)$.

2. Any $h$ as above is compatible with a certain morphism of the corresponding $\delta$-coniveau spectral sequences ("with a shift by $d'-d$"). Moreover, there exists a method for making these morphisms of $\delta$-coniveau spectral sequences 
 (canonical and) functorial starting from $E_2$.

II. Assume (by our usual abuse of notation) that $H=\dms(-,H)$ for some $H\in \obj \dms$. Then the following statements are valid also.

1. $F^jH(X)=\imm (H^{\thom\le j-d}(X)\to H(X))$. 

2. Starting from $E_2$ the $\delta$-coniveau spectral sequence for $H^*(X)$ can be expressed in terms of the spectral sequence $E_{1}^{pq}=(H^{\thom=q})^p(X)\implies H^{p+q}(X)$ (cf. Theorem 2.4.2(II) of \cite{bger}).

\end{pr}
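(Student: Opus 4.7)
The plan is to first establish Part II, whose assertions identify the $\delta$-coniveau spectral sequence abstractly in terms of $\thom$, and then to deduce the functoriality in Part I from this identification. For Part II.1 I would argue as follows. An element $\alpha \in H^q(X) = \dms(\mgbm(X)[-q], H)$ lies in $F^j H^q(X)$ iff $\alpha|_U = 0$ for some open $U \subset X$ with $\codimz_X(X \setminus U) \ge j$; by the localization triangle (\ref{emgys}) applied to $Z = X \setminus U$, this is equivalent to $\alpha$ factoring through the connecting morphism $\mgbm(X)[-q] \to \mgbm(Z)[1-q]$. Since $\dimz(Z) \le d - j$, Theorem \ref{tmgeff}(I.\ref{idecomp}) places $\mgbm(Z)\lan \dimz(Z) \ra$ in the generating preaisle of $\dms^{\thom \le 0}$, so by $\thom$-orthogonality $\alpha$ lifts through $H^{\thom \le j-d}$. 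The reverse inclusion uses Theorem \ref{thts}(I.1) to control the stalks: a class lifted from $H^{\thom \le j-d}$ has vanishing stalks at all points of $\delta$-codimension less than $j$, which combined with Proposition \ref{pcss}(I.1) and Corollary \ref{cpsc}(II) forces it to restrict to zero on a suitable open of the required complement codimension.

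Part II.2 is then a formal consequence: two bounded, convergent spectral sequences with the same induced filtration on the abutment agree from $E_2$ onwards once their $E_2$-terms are identified. In the spectral sequence $E_1^{pq} = (H^{\thom=q})^p(X) \implies H^{p+q}(X)$ the $E_2^{pq}$ is the $p$-th cohomology of a Cousin-type complex built from $H^{\thom=q}$, and Part II.1, combined with the calculation of the $E_1$-level in Proposition \ref{pcss}(I.1), identifies it with the coniveau $E_2^{pq}$.

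For Part I, the key input is that by Theorem \ref{tcisdeg}(\ref{igenc},\ref{imotalt}) together with the triangles of Theorem \ref{tmgeff}(I.\ref{igys}), $\dmcs$ is generated by the $\mgbm(Y)(r)$ for $Y/S$ projective regular, and any $h \in \dmcs(\mgbm(X'), \mgbm(X))$ between such generators decomposes into compositions involving $M^i$ for closed immersions and $M_g$ for smooth morphisms, via graph factorizations $f = \pi \circ \Gamma_f$ of arbitrary $S$-morphisms. Proposition \ref{pcss}(I.3) produces the morphism of coniveau spectral sequences induced by a smooth piece (with the expected $\delta$-dimension shift) and Proposition \ref{pcss}(I.4) together with Remark \ref{rcouples}(4) handles the proper pieces; the two shifts combine to the claimed $d' - d$. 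This yields Part I.1 and the existence in Part I.2. For the canonicity from $E_2$ in Part I.2, the $E_2$-term and its differentials are determined by the induced map on filtered cohomology, which by Part II.1 depends only on $h$ itself and not on its decomposition.

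The cleanest organisation of Part I in fact uses the Gersten weight structure $\wger(S)$ of \S\ref{sgws}, which is orthogonal to $\thom(S)$ and realises the $\delta$-coniveau spectral sequence as its weight spectral sequence, making both assertions formal consequences of the weight-versus-$t$ yoga (compare Theorem 2.4.2(II) of \cite{bger}); this should be the main obstacle absent that apparatus. Without it, the main subtlety is the canonicity-from-$E_2$ in Part I.2: a priori the morphism of spectral sequences depends on how one presents $h$ as a composition of elementary pieces, and Part II.1 is precisely what pins down the $E_2$-level canonically, which is why Part II must be treated first.
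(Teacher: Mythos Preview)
Your approach to Part I contains a genuine gap. A morphism $h\in\dmcs(\mgbm(X'),\mgbm(X))$ is an \emph{arbitrary} morphism in the triangulated category, not one induced by a morphism of schemes. Even when $X,X'$ are regular and projective over $S$, such $h$'s are (roughly speaking) correspondences, and there is no reason for them to decompose as compositions of maps of the form $M_g$ and $M^i$ arising from graph factorizations of actual $S$-morphisms. Proposition \ref{pcss}(I.3--4) only supplies coniveau-compatibility for those special $h$'s that \emph{are} induced by smooth or finite morphisms of schemes; it says nothing about a general $h$.

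The paper's argument for I.1 and the existence part of I.2 is quite different and works for arbitrary $h$: one shows directly that, given any $\lambda\in L(X)$, there exists $\lambda'\in L(X')$ together with a (non-unique) lift of $h$ to a morphism of Postnikov towers $P(\lambda')\to P^{d'-d}(\lambda)$. The lift is built stage by stage, and the obstructions vanish precisely by the orthogonality supplied by Theorem \ref{tmgeff}(I.\ref{idecomp}) and Theorem \ref{tmgeff}(II.2): the relevant cones lie in (shifts of) the preaisle $\dms^{\le 0}$ and hence map trivially to the ``higher'' pieces of the other tower. This is exactly the ``weak functoriality of weight Postnikov towers'' argument of Remark 1.5.9(1) of \cite{bws}, and it is what makes the statement hold for \emph{all} $h$, not only geometric ones. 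You correctly note at the end that the Gersten weight structure gives the cleanest packaging of this; the point is that even the ``elementary'' route already requires this tower-lifting idea, not a geometric decomposition of $h$.

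Your argument for Part II.2 also has a gap: the principle that two bounded convergent spectral sequences with the same abutment filtration and identified $E_2$-terms must agree from $E_2$ onwards is false in general --- the differentials $d_r$ for $r\ge 2$ are not determined by this data alone. One genuinely needs a comparison at the level of exact couples (equivalently, of the underlying towers), and this is again where the tower-lifting argument, or the methods of Theorems 4.4.2(6) and 7.4.2 of \cite{bws} and Theorem 4.1 of \cite{ndegl} that the paper invokes, do the real work. Your argument for the inclusion $F^jH(X)\subset\imm(H^{\thom\le j-d}(X)\to H(X))$ in II.1 is correct and is essentially what the paper intends; the reverse inclusion needs somewhat more than your stalk sketch, but is in the right spirit.
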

\begin{proof}[A sketch of the proof of some  of the statements]

I. Certainly, it suffices to compare the corresponding exact couples.

Hence in order to establish assertion 1 and the first part of assertion 2 it suffices to verify the following fact:
for any $X_i^{\lambda}$ as in the proof of Proposition \ref{pcss}(I.1) there exists a choice of ${X'}_i^{\lambda'}$ such that $h$ extends (non-uniquely) to a morphism of Postnikov towers 
$P(\lambda')\to P^{d'-d}(\lambda)$; here the latter tower is obtained from $P(\lambda)$ via shifting the sequence of $U_i^\lambda$ by $d'-d$. 
The latter fact can be (more or less) easily deduced from Theorem \ref{tmgeff}(I.2,II.2) via an argument similar to the proof of the "weak functoriality" of weight decompositions and weight Postnikov towers (in Remark 1.5.9(1) of \cite{bws}).


II 
Assertion 1 and a part of assertion 2 can easily be established using 
some arguments from the proofs of Theorems 4.4.2(6) and 7.4.2 of \cite{bws}, respectively. The whole statement can be established via the method of the proof of Theorem 4.1 of \cite{ndegl}.

\end{proof}

   \subsection{Comotives over $S$, the Gersten weight structure for them, and generalized $\delta$-coniveau spectral sequences}\label{sgws}
   
   Now we sketch the proof of the existence a {\it Gersten weight structure} $\wger(S)$ on a certain triangulated category of comotives over $S$. 
   The existence of this weight structure gives a natural proof of Proposition \ref{pcfilt} and also so-called {\it virtual $t$-truncations} of cohomology theories on $\dmcs$ (see \S2.3 of \cite{bger}). We do not give  detailed proofs of the properties of  $\wger(S)$ here since the applications mentioned do not seem to be extremely interesting.
   Yet note that we also obtain a certain new 'description' of $\hrthom(S)$ when $S$ is {\it  countable} i.e.  if all of its 
    affine subschemes are the spectra of countable rings (i.e. if $S$ has a Zariski cover by the spectra of some countable rings).
   
   So, we will only explain here that the methods of \cite{bgern}  work in our (relative) settings; all the necessary definitions and detailed arguments can be found in ibid. (whereas using the list 
   of definitions in \S1.1 of ibid. can help in locating them). 
   Since the author did not write down the full proofs of the claims made below (yet), it makes some sense (for a cautious reader) to assume 
   that $S$ is countable.
      Indeed, for a countable $S$ the homotopy limits mentioned below are certainly countable; so that one can "define them on the triangulated level" and apply the 'crude' methods of \cite{bger} (that 'almost do not depend on models for motives'; also pay attention to Remark 2.1.2(3) of \cite{bgern}); see also Remark \ref{rwger}(2,3) below. 
   
  As a starting point of our constructions we need a triangulated category $\gdos$ that is closed with respect to all small products, and contains $\dmcs$ as a full cogenerating subcategory of cocompact objects. It order to construct $\gdos$,  let us recall that $\dms$ certainly has a model (i.e., it is the homotopy category of a certain Quillen closed proper 
  stable model category; several possible models of this sort were constructed in \cite{degcis}). We can assume that this model is injective, i.e., that any object is fibrant in it. Moreover, there exists a model $Mo_S$ of $\dms$ such that the functor $\mgs$ (see Proposition \ref{pmg}) comes from a certain functor $C^b(\sms)\to Mo_S$, and the functor $-(-1)$ lifts to an invertible endofunctor of $Mo_S$. 
  
   So, it is no problem to construct   $\gdos$ in question using the methods of \S5.2 of \cite{bgern} (that heavily rely on the results of \cite{tmodel}). $\gdos$ has several nice properties; in particular, its objects are certain homotopy limits of objects of $\dms$ and the Tate twist lifts (from $\dmcs$) to an exact autoequivalence of $\gdos$.
   Next, using the results of \S2.2 of \cite{bgern} one can construct on $\gdos$ a weight structure "cogenerated" by $\cup_{i\in \z}\obj\chowes\{i\}$.
   
   The heart of $\wger(S)$ is cogenerated by (twisted and shifted) comotives of fields over $S$. For such an $X_0/S$ 
   one can easily   construct the corresponding $\mgbm(X_0)\in \obj \gdos$ as follows: consider a smooth connected $S$-scheme $X$ of relative dimension $s$ 
   such that $X_0$ is a point of $X$ and consider the homotopy limit of 
    $\mgs^{U_x^\al}(T_x^\al)\lan -s \ra$ where the direct limit is taken for all open neighbourhoods $U_x^\al$ of $X_0$ in $X$ and $T_x^\al=\cl_{U_x^\al}X_0$ (note that the connecting maps between  various $\mgs^{U_x^\al}(T_x^\al)$ "lift to the model level"!). 
    Arguing similarly to Proposition \ref{pmg}(II.\ref{iop},\ref{icl}) 
     one can prove that the object constructed this way does not depend on the choice of $X$ (at least, up to an isomorphism). Moreover, one can similarly define $\mgbm(Y)$ for any localization $Y$ of a finite type $S$-scheme, as well as the homotopy limit (with respect to $\lambda$ running through $L$) of the Postnikov towers $P(\lambda)$ used in the construction of coniveau spectral sequences in the proof of Proposition \ref{pcss}(I.1).   We will denote homotopy limits of this type just by $\prli$.
     
     Now we are able to formulate the main properties of $\wger(S)$.

\begin{pr}\label{pwger}

\begin{enumerate}
\item\label{idwger} $\gdos_{\wger(S)\le 0}={}^\perp(\cup_{i\in \z}\chowes\{i\}[1])$. 

\item\label{itwe} $\{j\}$ is $\wger(S)$-exact for any $j\in \z$.

\item\label{imgneg} $\mgs(Y)\in \gdos_{\wger(S)\ge -e}$ for $Y$ being a 
an essentially finite type $S$-scheme of $\delta$-dimension $e$.

\item\label{iheart} The heart $\hwgers$ is equivalent to the Karoubization of the category of all  
$\prod_i \mgbm(X_0^{i})(j_i)[j_i+d_i]$ for $X_0^i$ running through  fields over $S$, $d_i=\delta(X_0^i)$,
 $j_i\in \z$.

\item\label{iwpost} For $X$ of finite type over $S$, $\dimz( X)=d$, 
the tower $\prli_{\lambda\in L}P(\lambda)[d]'$
(we simply shift all the objects of the tower by $[d]$; 
 see the discussion preceding this proposition for the construction of the tower) is a weight Postnikov tower for $\mgbm(X)[d]$.

\item\label{idual} For any  cohomological functor 
$H':\dmcs\to \au$ ($\au$ is an AB5-category) we consider its extension to a functor $H:\gdos\to \au$ via the method mentioned in Proposition 4.1.1 of \cite{bgern}.
In particular, 
 we 
do so for the functor $\dms(-,M)$ (from $\dmcs$ to $\qv$) for all   
 $M\in \obj \dms$.

Then the collection of these functors yields a nice duality $\Phi_S:(\gdos)^{op}\times \dms\to \qv$ such that $\wgers\perp_{\Phi_S}\thom(S)$.

\item\label{iwgs} For any extended cohomological functor $H:\gdos\to \au$ the weight spectral sequence $T=T(M,H)$ ($M\in \obj \gdos$) corresponding to $\wgers$ is functorial in $H$ and is $\gdos$-functorial in $M$ starting from $E_2$. In the case where $M=\mgbm(X)$ for a finite type $X/S$ of $\delta$-dimension $d$, one can choose $T$ to be the 
shift of the corresponding $\delta$-coniveau spectral sequence by $d$ (see Proposition \ref{pcss}(I.1,4)) 
 
We will call such a $T$ a {\it generalized $\delta$-coniveau spectral sequence} (as we also did in \cite{bgern}). 

\item\label{idualp} The category of pure extended cohomological functors from $\gdos$ to an abelian $\au$ (satisfying AB5) is naturally equivalent to the category of those contravariant additive functors $\hwgers\to \au$ that convert all $\hwgers$-products into coproducts.

\item\label{idualt}  For any $N\in \obj \dms$ the generalized $\delta$-coniveau spectral sequences for the functor $\Phi_S(-,N)$  can be $\gdos$-functorially expressed in terms of the $\thom(S)$-truncations of $N$ starting from $E_2$ (cf. Proposition 2.4.3 of ibid.).



\end{enumerate}

\end{pr}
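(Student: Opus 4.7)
My plan is to adapt the arguments of \cite{bgern} to the present relative setting, treating the parts in order.  The preliminary step is the construction of $\gdos$ itself: fix an injective stable model $Mo_S$ of $\dms$ in which $\mgs$ lifts from $C^b(\sms)$ and $-(-1)$ lifts to an invertible endofunctor, then invoke \S5.2 of \cite{bgern} (which relies on \cite{tmodel}) to obtain $\gdos$ as a triangulated category of homotopy limits containing $\dmcs$ as a cogenerating subcategory of cocompact objects, with $\{j\}$ lifting to an exact autoequivalence.  Granted this, part \ref{idwger} is the cogeneration theorem of \S2.2 of \cite{bgern} applied to the set $\cup_{i\in\z}\obj\chowes\{i\}$ of cocompact objects; part \ref{itwe} is immediate since $\{j\}$ is an autoequivalence permuting this cogenerating family.

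For parts \ref{imgneg} and \ref{iwpost} the key input is Theorem \ref{tmgeff}(I.2), which places $\mgbm(X)\lan d\ra$ in the preaisle generated by $\chowe(S)\{i\}$ for finite type $X/S$ of $\delta$-dimension $d$.  Via the orthogonality of \ref{idwger} this dualizes to $\mgbm(X)\in \gdos_{\wger(S)\ge -d}$, and the essentially finite type case follows by passing to the homotopy limit defining the comotive of a localization (smooth localizations handle $\mgs$).  For the Postnikov tower in \ref{iwpost}, the candidate $\prli_{\lambda\in L}P(\lambda)[d]$ has, at step $p$, a cone that splits as a coproduct indexed by points of $\delta$-codimension $p$ in $X$, with summands of the form $\mgs^{U_x^\al}(T_x^\al)\lan -s\ra[d]$; taking the homotopy limit along the filtered system of neighbourhoods produces the twisted shifted comotive $\mgbm(X_0)[d-\codimz(X_0)]$ of the corresponding field, which lies in $\hwgers$ by part \ref{imgneg} combined with its dual bound on the cocompact side.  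Part \ref{iheart} is then the standard recognition principle (\S2 of \cite{bgern}) for hearts of cocompactly cogenerated weight structures, once one checks that the cogenerators $\chowes\{i\}$ are recovered in $\hwgers$ from products of these field comotives via the Postnikov towers just constructed.

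For parts \ref{idual}--\ref{idualt} I would follow \S4 of \cite{bgern} essentially verbatim.  Any cohomological $H':\dmcs\to \au$ with AB5 target extends to $H:\gdos\to\au$ by sending a homotopy limit to the corresponding $\au$-inverse limit, and applying this to $\dms(-,M)$ supplies the duality $\Phi_S$.  The orthogonality $\wgers\perp_{\Phi_S}\thom(S)$ then reduces, by the cogeneration in \ref{idwger} and part \ref{iwpost}, to the vanishing of $\dms(\mgbm(X_0)[j],N)$ for $N\in\dms^{\thom\ge 1}$ and a field $X_0/S$ of appropriate $\delta$-dimension, which is exactly (the positive half of) Theorem \ref{thts}(I.1).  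Part \ref{iwgs} now identifies the weight spectral sequence $T(\mgbm(X),H)$ with the shifted $\delta$-coniveau spectral sequence via the Postnikov tower of \ref{iwpost}, using the standard $E_2$-functoriality of weight spectral sequences (Theorem 2.4.2 of \cite{bws}).  Part \ref{idualp} is the usual extension-by-products equivalence for pure extended functors.  Finally, part \ref{idualt} combines \ref{idual} with the observation that $\thom(S)$-truncations of $N$ produce weight-filtration truncations of $\Phi_S(-,N)$ in complementary ranges, yielding the $E_2$-level comparison.

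The main obstacle throughout is the well-definedness of the field comotives $\mgbm(X_0)$ in $\gdos$ (independence of the ambient smooth scheme used to present them, via an analogue of Proposition \ref{pmg}(II.\ref{iop},\ref{icl})) together with the compatibility of the homotopy limits of Postnikov towers with the weight filtration.  As noted in the discussion preceding the proposition, when $S$ is countable the homotopy limits in question are countable and can be handled purely on the triangulated level in the style of \cite{bger} (see also Remark 2.1.2(3) of \cite{bgern}); in the general case one must perform the homotopy limits in the model $Mo_S$ and invoke \cite{tmodel}, which is the reason the author defers full proofs.
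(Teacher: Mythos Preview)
Your approach is essentially the same as the paper's: both are sketches that defer the detailed arguments to \cite{bgern}, invoking the cogeneration machinery of \S2.2 for parts \ref{idwger}--\ref{itwe}, the Postnikov-tower methods for \ref{imgneg}--\ref{iwpost}, and the duality results of \S4 for \ref{idual}--\ref{idualt}.  The paper's proof is in fact even terser than yours, simply naming the analogous statements in \cite{bgern} (Proposition 3.2.6, Theorem 3.3.1(7), Propositions 4.4.1, 4.3.1, 4.6.1, Corollary 4.4.3) and asserting that their proofs carry over.

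Two small corrections.  First, in your sketch of the extension $H:\gdos\to\au$, a homotopy limit in $\gdos$ should be sent to the corresponding \emph{direct} limit in $\au$, not an inverse limit: $H$ is contravariant, and this is precisely why the AB5 hypothesis on $\au$ is imposed.  Second, your phrase ``via the orthogonality of \ref{idwger} this dualizes to $\mgbm(X)\in \gdos_{\wger(S)\ge -d}$'' is imprecise as stated: membership in the preaisle $\dms^{\le 0}$ does not formally translate to membership in $\gdos_{\wger\ge 0}$ by an orthogonality argument, since the two closures involve different operations (coproducts and positive shifts versus products and negative shifts).  What actually works (and what the paper's Remark \ref{rwger}(4) alludes to) is that the \emph{proof} of Theorem \ref{tmgeff}(I.2) builds $\mgbm(X)\lan d\ra$ from objects of $\chowes$ by finitely many extensions and non-negative shifts, and this finite cell structure lands in $\gdos_{\wger\ge 0}$ directly.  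The paper also orders the deductions slightly differently, obtaining \ref{iheart} from \ref{imgneg} first and then \ref{iwpost} from \ref{iheart}; your order is equally valid.
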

\begin{proof}

Assertion \ref{idwger} is essentially just the definition of $\wger(S)$; it immediately implies assertion \ref{itwe}.

Assertion \ref{imgneg} can be proved via the method of the proof of Proposition 3.2.6 of \cite{bgern} (cf. also the proof of Proposition \ref{pcss}(I.1) above).
Assertion \ref{iheart} can be deduced from it 
similarly to the proof of Theorem 3.3.1(7) of \cite{bgern}. It easily yields assertion \ref{iwpost} (cf. ibid.).

The natural analogues of the remaining assertions are Propositions 4.4.1(I.1,II), 4.3.1,  4.6.1, and Corollary 4.4.3  of ibid. respectively; their proofs can be carried over
to our setting without any problems.

\end{proof}

\begin{rema}\label{rwger}

\begin{enumerate}
\item It could be quite interesting to 
construct various connecting functors between  $\gd(-)$ and study their weight-exactness. Most of these functors should  be extensions of 
   certain $\dmc$-ones;  so one needs certain lifts of the 
   latter  to the model level.

Possibly, the author will study this matter in future. He believes that all the functors of the type $f_!$ (for a finite type $f$) as well as $g^*$ for a quasi-finite $g$ are weight-exact (cf. Theorem \ref{thts}(II)).

\item We conjecture that the following analogue of Corollary 4.6.3 of \cite{bgern} is valid: $\hrthom(S)$ is equivalent to the category of those contravariant additive functors $\hwgers\to \qv$ that convert all $\hwgers$-products into coproducts. 

The problem here is to prove that any such additive functor is representable (via $\Phi_S$) by an object of $\dms$ (since in contrast to loc. cit. it usually 
does not come from a sheaf for any Grothendieck topology for $S$-schemes). Still in the case where $S$ is countable (in the sense described in the beginning of this subsection; if this is the case, then morphisms sets and the number of isomorphism classes  of objects in $\dmcs$ are certainly countable) then the representability assertion in question certainly  follows from 
of Theorem 5.1 of \cite{neebrh} that yields that {\bf any} cohomological functor $\dmcs\to \qv$ is representable by an object of $\dms$.  

Our conjecture could be useful for the study of $\hrthom$ and its relation to cycle modules.


\item 
Theorem 5.1 of \cite{neebrh} also simplifies the construction of "comotivic" functors (between various $\gd(-)$) over countable base schemes; in particular, it 
could be applied to the study of Borel-Moore motives of localizations of finite type $S$-schemes.

\item The proof of Proposition \ref{pwger}(\ref{iwgs}) certainly 
uses assertion \ref{iwpost} of the proposition, which is a consequence of assertion \ref{imgneg}. So,
the following 
alternative plan of the proof of assertion \ref{iwgs} could be useful for our setting; the argument can also be 
applied in other (related) contexts (this includes 
realizations mentioned in the next part of this remark).

 Theorem \ref{tmgeff}(I.2) immediately yields assertion \ref{imgneg} of Proposition \ref{pwger} 
 in the case where 
 $Y$ is a finite type $S$-scheme. 
Moreover, Theorem \ref{tmgeff}(II.2) implies that $\mgbm(X_0)[e]\in \gdos_{w\le 0}$ for any field $X_0/S$ of $\delta$-dimension $e$. Fix a finite type $X/S$ of $\delta$-dimension $d$ and consider some $\lambda_0\in L$ for it (see the proof of Proposition \ref{pcss}(I.1)). Then a (more or less) easy argument 
 (cf. the proof of Proposition 2.7.2(2) of \cite{bger}) yields that for any choice of a weight Postnikov tower $P(X)$ for $\mgbm(X)[d]$ the obvious
morphism $\prli_{\lambda\in L}P(\lambda)[d]\to P(\lambda_0)[d]$ (non-uniquely) factors through $P(X)$. 

Now let $H$ be a cohomology theory obtained via the extension procedure mentioned in Proposition \ref{pwger}(\ref{idual}). Then $H^*$ converts
homotopy limits into inductive limits; hence the inductive limits of the spectral sequences obtained via applying $H^*$ to $P(\lambda)[d]$ (i.e. the shifted
  $\delta$-coniveau spectral sequence for $H^*(X)$) is isomorphic to the spectral sequence coming from $H^*$ and $\prli_{\lambda\in L}P(\lambda)[d]$. Hence  this shifted $\delta$-coniveau spectral sequence is a retract of the one obtained via applying $H^*$ to  $P(X)$ i.e. of the corresponding {\it Gersten-weight} spectral sequence.
This immediately yields that the induced Gersten-weight filtration for $H^*(X)$ coincides with the $\delta$-coniveau one (up to a  shift by $d$). 

With somewhat more effort one can also prove that the shifted $\delta$-coniveau spectral sequence mentioned is actually isomorphic to the Gersten-weight one starting from $E_2$. Indeed, it suffices to compare the $D$-terms of the derived exact couples (for the spectral sequences in question). Now, the 'Gersten-weight' $D_2$-terms are given by the 
virtual $t$-truncations of $H^*$; the latter functors
 (also) convert
homotopy limits into inductive limits (see Remark 4.4.4(4) of \cite{bgern}). Hence one can apply an argument similar to the one used in the proof of Theorem 7.4.2 of \cite{bws} to obtain the following statement: the $D_2$-terms of our shifted $\delta$-coniveau spectral sequence calculate the corresponding values of Gersten-weight virtual $t$-truncations of $H^*$ indeed.

\item Assume that we have a {\it realization} of $\dm(-)$ (cf. \S17 of \cite{degcis}) i.e. a system of triangulated categories
$\dm'(X)$ for $X$ running through reasonable schemes (over certain fixed base scheme $S$) that is equipped with the connecting functors of the type $f^*$, $f_*$, $f^!$, and $f_!$ (cf.   
Theorem \ref{tcisdeg}) and a $2$-functor $H(-):\dm(-)\to \dm'(-)$ 'respecting' all of the connecting functors (cf. Theorem 2.4.1(I) of \cite{bmm}). Under certain (quite reasonable) conditions $H$ also yields a $2$-functor $H_{\gd}$ from $\gd(-)$ to the $2$-category $\gd'(-)$ (whose definition is similar to the definition of $\gdos$ discussed above; we take $H(\dmcs)=\dmc'(S)$ as the system of cocompact cogenerating objects of $\gd'(S)$). 

Next, we can certainly define a weight structure $w_{Ger'}(S)$ for $\gd'(S)$ that is "cogenerated by" $H(\cup_{i\in \z}\chowe(S)\{i\})$; we have  
$H_{\gd}(S)(\gdos_{w_{Ger}(S)\ge 0})\subset \gd'(S)_{w_{Ger'}(S)\ge 0}$.  For a cohomological functor $G $ from $\dmc'(S)$ to an abelian AB5 category $\au$
denote $G\circ H(S)$ by $F$. Then for any $M\in \obj \dmcs$ (or $\in \obj \gdos$)
Proposition 2.7.3(II) of \cite{bger} yields a morphism from the $\wger(S)$-weight spectral sequence converging to the  $F$-cohomolgy of $M$ to the $w_{Ger'}(S)$-one for the $G$-cohomology of $H(S)(M)$; this morphism is canonically defined starting from $E_2$  (here we denote the corresponding 'comotivic' extensions of  $G$ and $F$ by $G$ and $F$, also). In particular, we have a comparison statement for the induced 'Gersten-weight' filtrations.

Certainly, the most interesting situation is when these morphisms of spectral sequences (and filtrations) are isomorphisms (starting from $E_2$; thus, $w_{Ger'}(S)$-weight spectral sequences and filtrations can be used for the computation of the generalized $\delta$-coniveau ones). 
Using the arguments described above one can easily prove that  this is always the case if the natural analogue of Theorem \ref{tmgeff}(II.1) holds in $\dm'(S)$ (since it implies the weight-exactness of $H(S)$). The author hopes that this 
observation can shed some light on the Hodge and Tate standard conjectures (certainly, to this end one should 
find appropriate realizations for $\dm(-)$); this idea seems to be  closely related to Remark 8.14 of \cite{saitomsh}.

\item In \cite{bger} (and in \cite{bgern}) it was proved that the heart of the corresponding Gersten weight structure also contains the twisted comotif (resp. prospectrum) for an arbitrary semi-local essentially smooth affine scheme $T$ over the base field; this implies several splitting results for the comotives (resp. prospectra) of function fields and their cohomology. It seems that 
the corresponding fact for $T/S$ is wrong in general (unless 
$T$ actually lies over a field over $S$; probably, this 'negative' statement can be deduced from the main result of \cite{aycex}). This is one of the reasons for the author for not giving the full proofs of the claims of this subsection.

\item One can also consider the Chow weight structure on $\gdos$; this is the weight structure "cogenerated by" $\chows\subset \dmcs\subset \gdos$. We will call the corresponding spectral sequences and filtrations {\it Chow-weight} ones (as we did in \cite{brelmot}; cf. also \S4.7 of \cite{bger}).

In order to compare this weight structure with the Gersten one, one should 
restrict both of them to the "effective" subcategory $\gdeos$ of $\gdos$ i.e. to the subcategory cogenerated by $\chowes$ (cf. Proposition \ref{pdme}). 
For the corresponding restrictions we obviously have $\gdeos_{\wchow(S)\ge 0}\subset \gdeos_{\wchow(S)\ge 0}$. 
Hence one can easily carry over the results of \S4.8 of \cite{bger} to this setting. In particular, for any $M\in \obj \gdeos$ and a cohomological $H:\gdeos\to \au$ there exist a morphism from the $\wchow(S)$-weight spectral sequence for $(H,M)$ to the corresponding $\wger(S)$-one (i.e. to the generalized $\delta$-coniveau one). In particular, one can take $M=\mgs(X)(s)$ for a smooth $X/S$ here, where $s$ is the 
 $\delta$-dimension of $S$.

\end{enumerate}
\end{rema}


\end{document}